\documentclass[11pt,english]{article}
\usepackage{babel}
\usepackage[cp1250]{inputenc}
\usepackage[T1]{fontenc}
\usepackage{amsfonts}
\usepackage{amsmath}
\usepackage{amsthm}
\usepackage{mathrsfs}
\usepackage{hyperref}
\setlength{\topmargin}{-0.5cm} \setlength{\oddsidemargin}{-0.0cm}
\setlength{\textheight}{21.5cm} \setlength{\textwidth}{16cm}
\usepackage{enumerate}
\usepackage{graphicx}
\usepackage{pictex}

\newcommand*{\norm}[1]{\left\Vert{#1}\right\Vert}
\newcommand*{\abs}[1]{\left\vert{#1}\right\vert}

\newcommand*{\Om}{\Omega}
\newcommand*{\mr}{\mathbb{R}}
\newcommand*{\izi}{\int_{0}^{\infty}}
\newcommand*{\izt}{\int_{0}^{t}}
\newcommand*{\izj}{\int_{0}^{1}}
\newcommand*{\iO}{\int_{\Omega}}
\newcommand*{\p}{\partial}
\newcommand*{\poch}{\frac{d}{dt}}
\newcommand*{\rlm}{\p^{(\mu)}}
\newcommand*{\mb}{\mu(\beta)}

\newcommand*{\dm}{D^{(\mu)}}
\newcommand*{\da}{D^{\alpha}}
\newcommand*{\im}{I^{(\mu)}}
\newcommand*{\al}{\alpha}
\newcommand*{\uz}{u_{0}}

\newcommand*{\tl}[1]{\widehat{#1}}

\newcommand{\ald}{d \al}

\newcommand*{\ma}{\mu(\alpha)}
\newcommand*{\mg}{\frac{\mu(\alpha)}{\Gamma(1-\alpha)}}
\newcommand*{\ggg}{\Gamma(\gamma)}
\newcommand*{\gjg}{\Gamma(1-\gamma)}

\newcommand*{\vf}{\varphi}
\newcommand*{\ve}{\varepsilon}

\def\Re{\operatorname {Re}}
\def\Im{\operatorname {Im}}
\def\arg{\operatorname{arg}}
\def\divv{\operatorname {div}}

\newcommand{\eqq}[2]{\begin{equation}  #1  \label{#2}\end{equation}    }
\newcommand{\hd}{\hspace{0.2cm}}
\newcommand{\hj}{\hspace{0.1cm}}
\newcommand{\no}{\noindent}
\newcommand{\m}[1]{\mbox{#1}}

\newcommand{\OT}{\Omega^{T}}

\newtheorem{rem}{{\textbf {Remark}}}
\newtheorem{lem}{{\textbf {Lemma}}}
\newtheorem{prop}{{\textbf {Proposition}}}
\newtheorem{theorem}{\textbf {Theorem}}
\newtheorem{coro}{\textbf  {Corollary} }
\newtheorem{de}{\textbf  {Definition} }

\newcommand{\ia}{I^{\alpha}}
\newcommand{\ija}{I^{1-\alpha}}

\newcommand{\ep}{\varepsilon}
\newcommand{\tamj}{(t-\tau)^{\alpha-1}}
\newcommand{\dt}{d\tau}

\newcommand{\ga}{\Gamma(\alpha)}
\newcommand{\gja}{\Gamma(1-\alpha)}
\newcommand{\jga}{\frac{1}{\ga}}
\newcommand{\jgja}{\frac{1}{\gja}}

\newcommand{\lap}{\Delta}

\newcommand{\vk}{\varphi_{k}}

\newcommand{\vp}{\varphi}

\newcommand{\ra}{\partial^{\alpha}}

\newcommand{\un}{u^{n}}

\newcommand{\io}{\int_{\Omega}}

\newcommand{\ta}{(t-\tau)^{-\alpha}}

\newcommand{\ld}{L^{2}(\Omega)}

\newcommand{\hjzo}{H^{1}_{0}(\Omega)}

\newcommand{\aij}{a_{i,j}}

\newcommand{\fjn}{f_{\frac{1}{n}}}

\newcommand{\ddt}{\frac{d}{dt}}

\newcommand{\hk}{\bar{H}}
\newcommand{\hkk}{\hk^{k}}

\newcommand{\hkks}{(\hk^{k})^{\ast}}

\newcommand{\cmt}{c_{\mu, T}}

\newcommand{\gs}{\gamma_{*}}

\newcommand{\tgj}{t^{\gamma-1}}
\newcommand{\tjg}{t^{1-\gamma}}

\newcommand{\map}{\widetilde{\mu}(\al)}

\begin{document}
\title{\bf Fractional diffusion equation with the distributed order  Caputo derivative}

\author{ Adam Kubica, Katarzyna Ryszewska\footnote{Department of Mathematics and Information
Sciences, Warsaw University of Technology, ul. Koszykowa 75, 00-662 Warsaw,
Poland, E-mail addresses:
A.Kubica@mini.pw.edu.pl, K.Ryszewska@mini.pw.edu.pl}}

\maketitle

\abstract{We consider fractional diffusion equation with the distributed order Caputo derivative. We prove existence of a weak and regular solution for general uniformly elliptic operator under the assumption that the weight function is only integrable.}

\vspace{0.3cm}

\no Keywords: distributed-order fractional diffusion, weak solutions, continuity at initial time.

\vspace{0.2cm}

\no AMS subject classifications (2010): 35R13, 35K45, 26A33, 34A08

\section{Introduction}

In this paper we consider parabolic type equation with the distributed order time fractional Caputo derivative and general elliptic operator with time-depended coefficients. We prove existence of a unique weak and regular solution. In our case the distributed order fractional derivative is defined as a weighted fractional Caputo derivative, where the weight $\mu$ is supposed to be any nonnegative nontrivial function from $L^{1}(0,1)$. This kind of problems were studied in many papers (see \cite{Al}, \cite{cmf}, \cite{Kochubei}-\cite{Kochubei3}, \cite{Li}, \cite{LiLuchkoY}, \cite{LuchkoY}, \cite{Rundell} ), however the authors  usually impose stronger assumption on $\mu$ and consider time-independent elliptic operators. These assumptions make the analysis easier, because one can apply the  Laplace transform and solution can be defined by means of Fourier series.

Our approach is rather general and is based on Galerkin method and energy estimates obtained for a special approximating sequence. We reconstruct the reasoning from \cite{Zacher}, however we do not need Yosida approximation to deal with the Caputo derivative. This is the main advantage of our treatment, which is elementary and can be applied to the more complicated problems (see \cite{Stefan}).

Furthermore, we investigate the correctness of weak form of the Caputo derivative proposed by Zacher  (\cite{Zacher}) and examine the continuity of solution at initial time. Our result (see theorem~\ref{tw2}) is related only to problems with Laplace operator and the case of general elliptic operator will be analyzed in another paper.

We recall the definition of fractional integration operator  $\ia $ and
the Riemann-Liouville \ fractional  derivative
\eqq{\ia f (t) = \jga \izt \tamj  f(\tau) \dt \hd  \m{ for }\alpha>0,}{fI}
\eqq{\hspace{-0.1cm} \partial^{\alpha}f(t) \hspace{-0.1cm}  =\hspace{-0.1cm} \ddt \ija f(t) \hspace{-0.1cm} = \hspace{-0.1cm} \jgja \ddt \hspace{-0.1cm} \izt \hspace{-0.1cm} \hspace{-0.1cm}  \ta f(\tau)
\dt \m{ for }\alpha\in [0,1).}{fRL}
We see that $\partial^{0}f = f$  and we complement the definition by $\partial^{1}f:=f'.$

By $\da f $ we denote the fractional Caputo derivative $\da f(t)= \ra [f(\cdot)- f(0)](t)$, where $\alpha \in [0, 1]$. Then, for a nonnegative function $\mu: [0,1]\longrightarrow \mathbb{R}$ we define the distributed order Caputo derivative
\eqq{\dm f (t) = \int_{0}^{1}(\da f)(t) \mu(\al) d\al.}{disCap}

The problem with distributed order Caputo derivative was studied in \cite{Kochubei}. In that paper a fundamental solution to the Cauchy problem for the  equation $\dm u = \lap u $ is obtained. Then,  under the assumptions
\[
\mu \in C^{2}[0,1], \hd \mu(\al) = \al^{\nu}\mu_{1}(\al), \hd  \mu_{1}(\al)\geq \rho>0 \hd \m{ for } \hd  \al\in [0,1], \nu\geq 0 ,
\]
the estimates for the fundamental solution were proven and the formula for solution to initial value problem was obtained. Next, nonhomogeneous equation is considered under the assumption that a source term $f$ is continuous in $t$, bounded and locally H\"older continuous in $x$, uniformly with respect to $t$ (theorem~5.3 \cite{Kochubei}).

In the paper \cite{Luchko1} the equation
\[
\dm u = \divv(p(x)\nabla u)- q(x)u
\]
is studied, where $p\in C^{1}(\overline{\Omega})$, $q\in C(\overline{\Omega})$ and $p(x)>0$, $q(x) \geq 0$ for $x \in \overline{\Omega}$. Under the assumptions that $\mu \geq 0$, $\mu \not \equiv 0$ and $\mu$ is continuous the maximum principle and uniqueness of solution is proved. In \cite{cmf} the equation $\dm u = A u$ is considered, where $A$ is a generator of a bounded $C^{0}$-semigroup and $\mu $ belongs to $C^{3}[0,1]$, \hd $\mu(0)\not = 0$ and $\mu(1)\not = 0$ or $\mu(\alpha)= a\al^{\nu}$ as  $\al \rightarrow 0$, where $a,\nu>0$. In \cite{Li} the fractional diffusion equation with distributed Caputo derivative is analyzed, where the elliptic operator has time-independent coefficients. In that paper, under the assumptions that $\mu$ is a non-negative continuous function such that $\mu(0)\not = 0$ the asymptotic behavior of solution is described. In \cite{Rundell} a similar problem is studied, however it is assumed that $\mu $ is in $C^{1}[0,1]$,  $\mu \geq 0$ and $\mu(1) \not = 0$. We would like to emphasize that all above assumptions are  particular cases of our general assumption concerning the weight $\mu$. Furthermore, under the last condition (\ref{intmu}) holds. Finally, we recall the paper \cite{Zacher}, where the notion of $\mathscr{PC}$ pair was introduced (see definition 2.1 \cite{Zacher}). In theorem~\ref{fint} we show that for any non-negative nontrivial function $\mu$ from $L^{1}(0,1)$ and $k$ defined by (\ref{k}) there exists $g$ such that $(k,g)$ is $\mathscr{PC}$ pair. However we do not intend to apply the result from \cite{Zacher} and we present here an alternative argument, which in our opinion is more flexible and allows to deal with problems in non-cylindrical domains (see \cite{Stefan}).

This paper is devoted to the parabolic-type problem with the distributed order Caputo derivative with the density $\mu$. We only assume that
\eqq{\mu \in L^{1}(0,1), \hd \hd \mu \geq 0, \hd \hd \mu \not \equiv 0.}{mainmu}
\no Now we formulate the parabolic-type problem:  assume that $\Om \subseteq \mr^{N}$ is an open and bounded set with smooth boundary  and $N \geq 2.$ We will consider the following problem
\begin{equation}\label{uklad}
 \left\{ \begin{array}{ll}
\dm u = Lu+f & \textrm{ in } \Om \times (0,T)=: \Om^{T} \\
 u|_{\p \Om} = 0 & \textrm{ for  } t \in (0,T) \\
 u|_{t=0} =\uz & \textrm{ in } \Om, \\
\end{array} \right. \end{equation}
where
\[
Lu(x,t)\hspace{-0.1cm} =\hspace{-0.2cm} \sum_{i,j=1}^{N} D_{i}(a_{i,j}(x,t)D_{j}u(x,t)) + \sum_{j=1}^{N} b_{j}(x,t)D_{j}u(x,t) + c(x,t)u(x,t)
\]
and $b_{j} \in L^{2}(\Om^{T}),$ $c \in L^{2}(\Om^{T})$, $a_{i,j}$ are measurable and  $a_{i,j}= a_{j,i}.$ What is more, we assume that $L$ is uniformly elliptic, i.e. there exist positive constants $\lambda, \Lambda$ such, that
\eqq{
\lambda|\xi|^{2} \leq \sum_{i,j}^{N}a_{i,j}(x,t)\xi_{i}\xi_{j} \leq \Lambda|\xi|^{2} \hd \hd \m{ for a.a. } (x,t) \in \Om \times (0,T) \hd \forall \xi \in \mr^{N}.
}{elipt}
To formulate the main result we need the notion of weak solution to the problem (\ref{uklad}).
\begin{de}\label{weakdef}
We say that function $u \in L^{2}(0,T;H^{1}_{0}(\Om))$ such, that
\[
\int_{0}^{1}  I^{1-\al}[u-\uz] \mu(\al) d\al \in {}_{0}H^{1}(0,T;H^{-1}(\Om))
\]
 is a weak solution to the problem (\ref{uklad}), if for all $\vf \in H^{1}_{0}(\Om) $ and a.a. $t \in (0,T)$ $u$ fulfills the equality
\[
\poch \izj  \int_{\Om} I^{1-\al}[u(x,t)-\uz(x)]\vf(x)dx \ma d\al
\]
\[
 + \sum_{i,j=1}^{N} \int_{\Om}a_{i,j}(x,t)D_{j}u(x,t)D_{i}\vf(x)dx = \sum_{j=1}^{N} \int_{\Om} b_{j}(x,t)D_{j}u(x,t) \vf(x)dx
\]
\eqq{ + \int_{\Om} c(x,t)u(x,t)\vf(x)dx + \left\langle f(\cdot,t),\vf(\cdot)\right\rangle_{H^{-1} \times H_{0}^{1}(\Omega)}.}{slabadef}
\end{de}
\vspace{-0.2cm}
\no  We shall present our main result. Let us denote $b = (b_{1}, \dots, b_{N}).$
\begin{theorem}\label{tw1}
Suppose that $\mu $ satisfies (\ref{mainmu}),  $T>0$, $u_{0}\in L^{2}(\Omega)$ and $f\in L^{2}(0,T;H^{-1}(\Omega))$. Assume that  (\ref{elipt}) holds  and for some $p_{1},p_{2}\in [2,\frac{2N}{N-2} )$ we have $b \in L^{\infty}(0,T;L^{\frac{2p_{1}}{p_{1}-2}}(\Om))$, \hd $c\in  L^{\infty}(0,T;L^{\frac{p_{2}}{p_{2}-2}}(\Om))$.
Then there exists a unique  $u$  weak solution to (\ref{uklad}) in the sense of definition \ref{weakdef} and $u$ satisfies  the following  estimate
\[
\norm{ \int_{0}^{1}  I^{1-\al}[u-\uz] \mu(\al) d\al }_{H^{1}(0,T;H^{-1}(\Omega))} + \| u \|_{L^{2}(0,T;H^{1}_{0}(\Omega))}
\]
\eqq{ \leq  c_{0} \left(\| u_{0} \|_{L^{2}(\Omega)}+ \| f \|_{L^{2}(0,T;H^{-1}(\Omega))}\right), }{a12}
where $c_{0}$ depends only on, $\mu$, $\Omega$,  $\lambda$, $\Lambda$, $p_{1}$, $p_{2}$, $T,$ $\| b \|_{L^{\infty}(0,T;L^{\frac{2p_{1}}{p_{1}-2}}(\Omega) )}$, $\| c \|_{L^{\infty}(0,T;L^{\frac{p_{2}}{p_{2}-2}}(\Omega) )}$.
What is more, if
\eqq{
\int_{\frac{1}{2}}^{1}\ma d\al > 0,
}{intmu}
then $u \in C([0,T];H^{-1}(\Om))$ and $u|_{t=0} = \uz.$
\end{theorem}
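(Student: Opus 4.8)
The strategy is the standard one for this class of problems: construct finite-dimensional Galerkin approximations, derive uniform energy estimates, pass to the limit, and finally establish uniqueness and the continuity at $t=0$. Let $\{\vf_k\}_{k\ge1}$ be an orthonormal basis of $\ld$ consisting of eigenfunctions of $-\lap$ with Dirichlet boundary conditions, which also forms an orthogonal basis of $\hjzo$. Seek $\un(x,t)=\sun \cnkt\vkx$ solving the projected system
\[
\poch \izj \io I^{1-\al}[\un\ct - \uz^n]\vkx \, dx\, \ma \, d\al + \sij \io \aijxt D_j\un D_i\vkx\, dx = (\text{lower-order terms}) + \langle f^n\ct, \vkx\rangle,
\]
for $k=1,\dots,n$, with $\un(\cdot,0)=\uz^n := P_n\uz$, and with $f^n$ a smooth-in-time approximation of $f$. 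The delicate point already at this stage is that the system is not an ODE in the usual sense: the principal term involves $\poch\izj I^{1-\al}[\cnkt-\cnk(0)]\ma\, d\al$, i.e.\ the distributed-order Caputo derivative of the coefficient vector. One resolves this by using the kernel $k$ from (\ref{k}) (the one appearing in theorem~\ref{fint}): the operator $v\mapsto \izj I^{1-\al}[v]\ma\, d\al$ equals convolution with $k$, so the equation for $\cn$ becomes a linear Volterra integral equation $\ddt(k * (\cn-\cn(0))) = \dots$, which is equivalent, after convolving with the resolvent kernel $g$ (the partner in the $\mathscr{PC}$ pair), to an integral equation of the second kind to which Picard iteration applies; this gives existence and uniqueness of $\cn$ on $[0,T]$.

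For the a priori estimates, test the Galerkin equation with $\cnkt$ and sum over $k$, i.e.\ formally pair the equation with $\un\ct$. The crucial analytic ingredient is the fractional coercivity / convexity inequality for the distributed-order Caputo derivative: for sufficiently regular $v$,
\[
\langle \dm v(t), v(t)\rangle \ge \jd \dm \|v(t)\|^2_{\ld} + (\text{nonnegative remainder}),
\]
or rather its integrated-in-time counterpart, which follows from the analogous single-order inequality $\langle \da v, v\rangle \ge \jd \da\|v\|^2$ integrated against $\ma\, d\al$. Combining this with uniform ellipticity (\ref{elipt}) for the principal part, and absorbing the lower-order terms $b\cdot\nabla\un$ and $c\un$ via Hölder (using $p_1,p_2<\frac{2N}{N-2}$, the Sobolev embedding $\hjzo\hookrightarrow L^{p_i}(\Om)$, and a Young/Gronwall-type argument adapted to the fractional setting — here one uses that $I^{1-\al}$ applied to the energy can be controlled), one obtains
\[
\izj I^{1-\al}\|\un\ct - \uz^n\|^2_{\ld}\, \ma\, d\al + \lambda \intt \|\un\|^2_{\hjzo}\, d\tau \le C\big(\|\uz\|_{\ld}^2 + \|f\|^2_{L^2(0,T;\hmjo)}\big),
\]
uniformly in $n$. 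From the equation itself one then bounds $\poch\izj\io I^{1-\al}[\un-\uz^n]\vf\, dx\,\ma\, d\al$ in $L^2(0,T;\hmjo)$, giving the uniform bound matching (\ref{a12}).

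With the uniform bounds in hand, extract a subsequence $\unk\rightharpoonup u$ weakly in $L^2(0,T;\hjzo)$ and with $\izj I^{1-\al}[\unk-\uz^{n_k}]\mu\, d\al$ converging weakly in ${}_0H^1(0,T;\hmjo)$; pass to the limit in the weak formulation — the only slightly nontrivial passage is handling the time-dependent and merely measurable $\aijxt$, which is fine since they enter linearly and $D_j\unk\rightharpoonup D_j u$ weakly in $\ldT$ while $\aijxt D_i\vf$ is a fixed $\ld$-function for each test function and $t$; an integration against a test function in $t$ plus a density argument over $\vf$ completes the identification. Lower semicontinuity of norms yields (\ref{a12}). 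Uniqueness follows by taking the difference of two solutions with $\uz=0$, $f=0$, testing with the solution itself, and invoking the same fractional coercivity inequality together with Gronwall — here I would use the weak form of the Caputo derivative and the identity from theorem~\ref{fint} to justify the testing rigorously (this is exactly where Zacher's machinery, or our replacement for it, is needed, since $u$ itself is only an $\hjzo$-valued $L^2$ function).

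Finally, for the continuity at $t=0$ under the extra hypothesis (\ref{intmu}): the point is that $\int_{1/2}^1\ma\, d\al>0$ forces the kernel $k$ to be ``not too singular'' — more precisely, it guarantees that $k\in L^p$ near $0$ for some $p>1$ large enough (or an analogous integrability/regularity statement) so that the map $v\mapsto \izj I^{1-\al}v\, \ma\, d\al$ can be inverted with a kernel $g$ that is bounded, or at least such that the resolvent representation $u-\uz = $ (something acting on $\ddt(k*(u-\uz))$) $ = g * \ddt(k*(u-\uz))$ makes sense. Since $\ddt(k*(u-\uz))=Lu+f \in L^2(0,T;\hmjo)$ and $g*(\cdot): L^2(0,T;\hmjo)\to C([0,T];\hmjo)$ when $g\in L^2_{loc}$, we get $u-\uz\in C([0,T];\hmjo)$ with value $0$ at $t=0$, hence $u\in C([0,T];\hmjo)$ and $u|_{t=0}=\uz$.

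**Main obstacle.** I expect the hardest part to be the rigorous justification of the energy estimate and of the uniqueness argument: pairing the distributed-order Caputo derivative with the solution, and applying the fractional convexity inequality, is only formal at the level of weak solutions because $\dm u$ is not a function but appears only through $\poch\izj I^{1-\al}[u-\uz]\mu\, d\al\in\hmjo$. Making this precise — either by working at the Galerkin level where everything is smooth and then passing to the limit, or by invoking the $\mathscr{PC}$-pair structure and a suitable approximation/mollification in time — and doing so with the merely-$L^1$ weight $\mu$ (so that $k$ can be arbitrarily singular at $0$ and need not be completely monotone in the usual strong sense) is the technical heart of the matter. A secondary difficulty is extracting from the bare assumption (\ref{intmu}) the precise integrability of the resolvent kernel $g$ needed for the $C([0,T];\hmjo)$ conclusion.
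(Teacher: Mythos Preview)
Your proposal is correct and follows essentially the same route as the paper: Galerkin approximation with Dirichlet--Laplacian eigenfunctions (the paper also mollifies $a_{ij},b_j,c,f$ in time so the approximate ODE system has smooth coefficients), solvability of the finite-dimensional system via the resolvent kernel $g$ from theorem~\ref{fint} and a fixed-point argument, the energy estimate via the convexity identity $2\langle \dm w,w\rangle = \dm\|w\|^2 + (\text{nonnegative terms})$ combined with a fractional Gronwall lemma (the paper proves one tailored to $g$), weak limits, and then uniqueness and continuity.

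Two minor points where the paper's execution differs slightly from what you sketch. For uniqueness, the paper does not use Gronwall: it tests against the projection $P_n u$, uses orthogonality to move the projection inside the fractional derivative, and then invokes the sharper integrated lower bound $\int_0^t \langle \partial^{(\mu)}_\tau w,w\rangle\,d\tau \ge c\, t^{-\gamma}\int_0^t\|w\|^2\,d\tau$ (valid for $t\le 1$), so that for $t$ small the left side strictly dominates the zero-order terms on the right and one iterates over short intervals. For continuity under (\ref{intmu}), your instinct is right that the key is $g\in L^2_{loc}$; the paper extracts this not from integrability of $k$ but directly from the formula for $g$: condition (\ref{intmu}) yields $\gamma_0\in(0,\tfrac14)$ with $\int_{1/2+\gamma_0}^{1-\gamma_0}\mu>0$, whence the denominator in the integral representation of $g$ picks up a factor $r^{1/2+\gamma_0}$ for large $r$, giving $g(t)\le C\,t^{\gamma_0-1/2}\in L^2$.
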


\no If the  assumption (\ref{intmu}) of the theorem~\ref{tw1} is not satisfied, then we are also able to show continuity of $u$ in the case $L = \Delta$, but we have to assume more regularity about $f$. To be more precise, we introduce the notation
\eqq{\hkk= \left\{w \in H^{k}(\Omega): \hd \Delta^{l}w_{|\partial \Omega}=0  \m{ for } l=0,1,\dots, \left[ \frac{k-1}{2} \right]  \right\}}{e2}
\no and $\hkks$ denotes the dual space to $\hkk$. Suppose that (\ref{intmu}) does not hold. Then, by the assumption (\ref{mainmu}), there exists an uniquely determined number  $m  \in \mathbb{N} \setminus \{0\}$ such that
\eqq{
\int_{\frac{1}{2m}}^{1} \ma d\al = 0 \textrm{ \hd and \hd  } \int_{\frac{1}{2(m+1)}}^{\frac{1}{2m}} \ma d\al > 0.
}{intmum}

\begin{theorem}\label{tw2}
 Suppose that $\mu$ satisfies (\ref{mainmu}), $f \in L^{2}(0,T;H^{-1}(\Om)),$ $\uz \in L^{2}(\Om)$, and $u$ is a weak solution to (\ref{uklad}) given by theorem \ref{tw1} for $L=\lap$. Assume that (\ref{intmu}) does not hold and $m$ is given by (\ref{intmum}). If additionally for every $k=1,\cdots, m$
\[\int_{0}^{\frac{1}{2m}} \hspace{-0.2cm} \cdots \int_{0}^{\frac{1}{2m}} I^{1-(\al_{1}+\cdots+ \al_{k})}f(x,t) \prod_{i=1}^{k}\mu(\al_{i})d\al_{i} \in {}_{0}{W^{1,2}(0,T;(\bar{H}^{2k+1})^{\ast})},
\]
then $u \in C([0,T];(\bar{H}^{2m+1})^{\ast})$ and $u|_{t=0} = \uz$ in $(\bar{H}^{2m+1})^{\ast}.$
\end{theorem}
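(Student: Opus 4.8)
The plan is to convert the weak equation into a variation-of-constants identity for $u$, expand it so that all contributions of $\uz$ and $f$ are manifestly in $C([0,T];\hkmjs)$, and isolate one remainder that is smoothed enough in time by an iterated convolution with the resolvent kernel. Throughout write $g_{\beta}(t):=t^{\beta-1}/\Gamma(\beta)$ (so $I^{\beta}h=g_{\beta}\ast h$) and $k(t):=\izj g_{1-\al}(t)\,\mu(\al)\ald$, the kernel of (\ref{k}), so that $\izj\ija v\,\ma\ald=k\ast v$. By Definition~\ref{weakdef} with $L=\lap$ (hence $a_{i,j}=\delta_{i,j}$, $b=0$, $c=0$), the solution satisfies $k\ast(u-\uz)\in{}_{0}H^{1}(0,T;\hmjo)$ and $\ddt[k\ast(u-\uz)]=\lap u+f$ in $L^{2}(0,T;\hmjo)$. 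Let $g$ be the function produced by Theorem~\ref{fint}, so $(k,g)$ is a $\mathscr{PC}$ pair: $g$ is nonnegative, nonincreasing and $k\ast g\equiv1$. Convolving the evolution equation with $g$, using $(k\ast(u-\uz))(0)=0$ and $g\ast k\ast(u-\uz)=\intt(u-\uz)$, gives
\[
u=\uz+g\ast(\lap u+f)\qquad\text{in }L^{2}(0,T;\hmjo).
\]

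Now put $v:=\lap u+f\in L^{2}(0,T;\hmjo)$ and $v_{0}:=\lap\uz+f$. Commuting $\lap$ with the time convolution, the identity above becomes $v=v_{0}+g\ast\lap v$; iterating this relation and convolving once more by $g$ yields
\[
u=\uz+\sum_{j=0}^{m-1}\Big[\lap^{j+1}\uz\cdot\big(1\ast g^{\ast(j+1)}\big)+g^{\ast(j+1)}\ast\lap^{j}f\Big]+g^{\ast(m+1)}\ast\lap^{m}v .
\]
Each $1\ast g^{\ast(j+1)}$ is a continuous scalar function vanishing at $t=0$; $\lap^{j+1}\uz\in(\hk^{2j+3})^{\ast}\subseteq\hkmjs$ for $j\le m-1$ (from $\lap^{j+1}\colon\hk^{2j+3}\to\hjzo$); and $\lap^{m}v\in L^{2}(0,T;\hkmjs)$. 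So the statement reduces to: (a) convolution with $g^{\ast(m+1)}$ smooths $L^{2}$ in time into $C$ with values vanishing at $0$; (b) each forcing term $g^{\ast(j+1)}\ast\lap^{j}f$ lies in $C([0,T];\hkmjs)$ and vanishes at $t=0$.

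For (a) — the main obstacle — let $\gs$ denote the essential supremum of $\operatorname{supp}\mu$. By (\ref{intmum}) and $\mu\in L^{1}$ (no atoms), $\gs\in(\tfrac{1}{2(m+1)},\tfrac{1}{2m}]$, hence $(m+1)\gs>\tfrac12$. For small $t$ and every $\delta>0$ one has $k(t)\ge c_{\delta}t^{-\gs+\delta}$ (only $\al$ close to $\gs$ contribute as $t\to0^{+}$); combined with $k\ast g\equiv1$ and monotonicity of $g$ this forces $g(t)\le C_{\delta}t^{\gs-1-\delta}$, hence $g^{\ast(m+1)}(t)\le Ct^{(m+1)(\gs-\delta)-1}$. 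Choosing $\delta$ so small that $(m+1)(\gs-\delta)>\tfrac12$ gives $g^{\ast(m+1)}\in L^{p}(0,T)$ for some $p>2$, and then convolution with $g^{\ast(m+1)}$ maps $L^{2}(0,T;X)$ into $C([0,T];X)$ with values vanishing at $t=0$ for any Banach $X$ (the same mechanism by which $I^{\al}$, $\al>\tfrac12$, maps $L^{2}$ into $C$). This resolvent estimate is exactly where the exponents $m+1$ and $2m+1$ of the statement originate, and is the delicate point.

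For (b) I would use the hypothesis on $f$. For $\ell=1,\dots,m$ let $K_{\ell}$ be the kernel with $K_{\ell}\ast f=\int_{(0,1)^{\ell}}I^{1-(\al_{1}+\cdots+\al_{\ell})}f\,\prod_{i=1}^{\ell}\mu(\al_{i})\,d\al_{i}$; from $g_{1-\al_{1}}\ast\cdots\ast g_{1-\al_{\ell}}=g_{\ell-(\al_{1}+\cdots+\al_{\ell})}$ one reads off $k^{\ast\ell}=I^{\ell-1}K_{\ell}$, and since $k^{\ast\ell}\ast g^{\ast\ell}=(k\ast g)^{\ast\ell}=g_{\ell}=I^{\ell-1}(1)$, injectivity of $I^{\ell-1}$ gives $K_{\ell}\ast g^{\ast\ell}\equiv1$. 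Hence $g^{\ast\ell}\ast(K_{\ell}\ast f)=\intt f$, and since by hypothesis $K_{\ell}\ast f\in{}_{0}W^{1,2}(0,T;(\hk^{2\ell+1})^{\ast})$, we get $f=g^{\ast\ell}\ast\ddt(K_{\ell}\ast f)$ with $\ddt(K_{\ell}\ast f)\in L^{2}(0,T;(\hk^{2\ell+1})^{\ast})$. Taking $\ell=m-j$ for $j=0,\dots,m-1$,
\[
g^{\ast(j+1)}\ast\lap^{j}f=\lap^{j}\Big(g^{\ast(m+1)}\ast\ddt(K_{m-j}\ast f)\Big)\in C([0,T];\hkmjs),
\]
vanishing at $t=0$, by (a) and the boundedness of $\lap^{j}\colon(\hk^{2(m-j)+1})^{\ast}\to\hkmjs$ (dual to $\lap\colon\hk^{l}\to\hk^{l-2}$ for odd $l\ge3$). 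Altogether, in the displayed identity for $u$ every summand lies in $C([0,T];\hkmjs)$ and, apart from the standalone $\uz$, vanishes at $t=0$; since $u\in L^{2}(0,T;\hjzo)\hookrightarrow L^{2}(0,T;\hkmjs)$, this exhibits a representative of $u$ in $C([0,T];\hkmjs)$ with $u|_{t=0}=\uz$ in $\hkmjs$. The remaining work is routine: justifying the convolution manipulations for $\hmjo$-valued functions, the commutation of $\lap$ with time convolution, and the mapping properties of the powers of $\lap$ on the scales $\hk^{l}$.
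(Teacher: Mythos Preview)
Your argument is correct and takes a genuinely different route from the paper's. The paper goes back to the Galerkin approximants $u^{n}$, applies to the approximate equation the iterated distributed--order Riemann--Liouville operator $\int_{I_{k}}\partial^{\alpha_{1}+\cdots+\alpha_{k}}\,(\cdot)\prod\mu(\alpha_{i})\,d\alpha_{i}$, bounds the result in $L^{p}(0,T;\hkkjs)$ using (\ref{dmszac}) and the hypothesis on $f$, and by weak compactness deduces $k_{m}\ast(u-u_{0})\in{}_{0}W^{1,p}(0,T;\hkmjs)$; then it constructs from scratch, via the Laplace inversion Lemma~\ref{odwrotna}, a kernel $g_{m}$ with $k_{m}\ast g_{m}=1$ and shows $g_{m}\in L^{q}$ for some $q>2$ by estimating its integral representation. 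Your proof bypasses all of this: you work directly with the weak solution through the variation--of--constants formula $u=u_{0}+g\ast(\Delta u+f)$, iterate it, and --- this is the nice point --- obtain the crucial time--smoothing by showing $g^{\ast(m+1)}\in L^{q}$ for some $q>2$ from the elementary inequality $g(t)\int_{0}^{t}k\le 1$ (monotonicity of $g$) together with the lower bound on $k$ near $0$. In fact your $K_{m+1}$ is the paper's $k_{m}$, so $g_{m}=g^{\ast(m+1)}$ by Titchmarsh; you simply reach its $L^{q}$ bound without redoing the Laplace analysis. Your handling of the forcing terms via $K_{\ell}\ast g^{\ast\ell}=1$ is also cleaner than the paper's bootstrap, and it makes transparent why exactly the hypotheses on $f$ for $k=1,\dots,m$ are needed (they supply the missing time regularity for $g^{\ast(j+1)}\ast\Delta^{j}f$, $j=0,\dots,m-1$). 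The only things left implicit --- the duality mapping properties $\Delta^{j}\colon(\hk^{2(m-j)+1})^{\ast}\to\hkmjs$, commuting $\Delta$ with time convolution, and carrying the iteration in $L^{2}(0,T;\hkmjs)$ --- are indeed routine, as you note.
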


\no Now we formulate the result concerning more regular solutions.

\begin{theorem}
Suppose that $\mu$ satisfies (\ref{mainmu}), $f\in L^{2}(0,T;\ld )$, $u_{0}\in \hjzo, $  (\ref{elipt}) holds,  $\max_{i,j}\| \nabla \aij \|_{L^{\infty}(\OT)}<\infty$ and
for some $p_{1}\in  [2,\frac{2N}{N-2} )$, $p_{2}\in [2,\frac{2N}{N-2} )\cap [2,4]$ we have $b \in L^{\infty}
(0,T;L^{\frac{2p_{1}}{p_{1}-2}}(\Omega))$,
\hd $c\in L^{\infty}(0,T;L^{\frac{p_{2}}{p_{2}-2}}(\Omega))$.
Then the problem (\ref{uklad}) has exactly one  solution
$u $ in $ L^{2}(0,T;H^{2}(\Omega))$
such that $\izj \ija [u-u_{0}] \ma d\al \in {}_{0}H^{1}(0,T;\ld)$ and  the following estimate
\[
\| u \|_{L^{2}(0,T;H^{2}(\Omega))} +
\left\| \izj  \ija [u-u_{0}] \hj \ma d\al \right\|_{H^{1}(0,T;\ld)}
\]
\eqq{
 \leq  C_{0}(\| u_{0}\|_{\hjzo }
+ \| f \|_{L^{2}(0,T;\ld)} )
}{b5}
holds, where $C_{0}$ depends only on $\mu$, $\lambda$, $\mu$, $p_{1}$,
$p_{2}$,  $T$,  $\| \nabla \aij \|_{L^{\infty}(\OT)}$, the Poincar\'e constant
and the $C^{2}$-regularity of $\partial \Omega$ and the norms
$ \| b \|_{L^{\infty}(0,T; L^{\frac{2p_{1}}{p_{1}-2}}(\Omega))}$,
$ \| c \|_{L^{\infty}(0,T; L^{\frac{p_{2}}{p_{2}-2}}(\Omega))}$.

\no Furthermore, if (\ref{intmu}) holds, then $u\in C([0,T];\ld)$ and $u_{|t=0}= u_{0}$.
\label{regularne}
\end{theorem}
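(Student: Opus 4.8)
The plan is to obtain Theorem~\ref{regularne} by upgrading the construction used for Theorem~\ref{tw1}, working at the level of the Galerkin approximations and passing to the limit. The natural approximating sequence is the same as in the proof of Theorem~\ref{tw1}: finite-dimensional Galerkin solutions $u^n$ built on the eigenfunctions of $-\Delta$ with homogeneous Dirichlet data (so that the approximate solutions automatically satisfy the boundary condition needed for $H^2$-regularity), combined with the regularization $\mu \mapsto \mu_{1/n}$ of the weight (shifting the lower limit of the $\al$-integral away from $0$), which renders the distributed-order derivative tractable. The additional hypotheses $u_0 \in \hjzo$, $f\in L^2(0,T;\ld)$, $\max_{i,j}\|\nabla \aij\|_{L^\infty(\OT)}<\infty$ and the sharper integrability of $b,c$ (in particular $p_2\le 4$) are exactly what is needed to run the higher-order energy estimate.

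First I would derive the basic energy estimate at the $\hjzo$ level rather than the $\ld$ level: testing the Galerkin equation with $-\Delta u^n$ (equivalently, differentiating the quadratic form $\io |\nabla u^n|^2$), one gets a term $\izj (\da \tfrac12 \|\nabla u^n\|_{\ld}^2)\ma d\al$ from the distributed-order derivative — here one uses the fundamental coercivity/monotonicity inequality for the Caputo derivative of a quadratic functional, i.e. $v \cdot \da v \ge \tfrac12 \da(v^2) + (\text{nonnegative boundary term})$, integrated against $\mu$ — plus a diffusion term controlled from below by $\lambda \io |\Delta u^n|^2$ after integrating by parts and using $a_{i,j}=a_{j,i}$, and the remaining terms $\io D_i(\aij D_j u^n)\Delta u^n$ involving $\nabla \aij$, $\io b_j D_j u^n \Delta u^n$, $\io c u^n \Delta u^n$, and $\io f \Delta u^n$. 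The gradient-of-coefficient terms are absorbed using $\|\nabla \aij\|_{L^\infty}$ and Young's inequality; the lower-order terms are estimated by the Gagliardo–Nirenberg/Sobolev interpolation inequalities (this is where $p_1,p_2 < \tfrac{2N}{N-2}$ and $p_2\le 4$ enter, to keep the product $\|b\|_{L^{\cdots}}\|Du^n\|_{\cdots}\|\Delta u^n\|_{\ld}$ and $\|c\|_{\cdots}\|u^n\|_{\cdots}\|\Delta u^n\|_{\ld}$ absorbable); $\io f\Delta u^n$ is handled by Young. After applying the distributed-order fractional Gronwall-type inequality (as already used for Theorem~\ref{tw1}), this yields $\|u^n\|_{L^\infty(0,T;\hjzo)} + \|\Delta u^n\|_{L^2(\OT)}$ bounded by the right-hand side of \eqref{b5}, uniformly in $n$; elliptic regularity on $\Omega$ (using the $C^2$ boundary) then upgrades $\|\Delta u^n\|_{L^2}$ to a bound in $L^2(0,T;H^2(\Omega))$. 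Simultaneously, rewriting the Galerkin equation as $\izj \ija [u^n-u_0]\ma d\al$ having $\ld$-valued weak $t$-derivative equal to $Lu^n+f^n$, the just-obtained bounds give a uniform bound on $\izj \ija[u^n-u_0]\ma d\al$ in ${}_0H^1(0,T;\ld)$.

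Second, I would pass to the limit: extract a subsequence with $u^n \rightharpoonup u$ weakly in $L^2(0,T;H^2(\Omega))$ and $\izj\ija[u^n-u_0]\ma d\al \rightharpoonup \izj\ija[u-u_0]\ma d\al$ weakly in ${}_0H^1(0,T;\ld)$, also dealing with the regularization $\mu_{1/n}\to\mu$ as in the proof of Theorem~\ref{tw1}. The limit $u$ is a weak solution in the sense of Definition~\ref{weakdef} (uniqueness from Theorem~\ref{tw1} identifies it with the solution already produced there), and \eqref{b5} follows from weak lower semicontinuity of the norms. Uniqueness in the stated class is immediate from Theorem~\ref{tw1}. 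Finally, the continuity statement: if \eqref{intmu} holds, then $\izj\ija[u-u_0]\ma d\al\in {}_0H^1(0,T;\ld)\hookrightarrow C([0,T];\ld)$ and vanishes at $t=0$; one then inverts the distributed-order integral operator $v\mapsto \izj\ija v\,\ma d\al$ on $C([0,T];\ld)$, using that under \eqref{intmu} this operator has a bounded inverse on spaces of continuous $\ld$-valued functions vanishing at $0$ (this is the mechanism already exploited in Theorem~\ref{tw1} for the $H^{-1}$-statement, here transplanted to $\ld$), to conclude $u\in C([0,T];\ld)$ with $u|_{t=0}=u_0$.

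The main obstacle is the second-order energy estimate: controlling the terms $\io D_i(\aij D_j u^n)\Delta u^n$ and the lower-order convection/reaction terms against $\io|\Delta u^n|^2$ with absorbable constants, while simultaneously carrying the non-local-in-time distributed-order Caputo term through the Gronwall argument. The delicate point is the interplay between the fractional Gronwall inequality and the need to absorb $\|\Delta u^n\|_{\ld}^2$ on the left: one must be careful that the interpolation inequalities produce a genuinely small coefficient on $\|\Delta u^n\|_{\ld}^2$ (hence the restriction $p_2\le 4$ and the $L^\infty$-in-time hypotheses on $b,c$), and that the remaining $\|u^n\|_{\hjzo}$-terms feed back into the fractional Gronwall inequality in the admissible form. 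The coefficient-gradient term is genuinely new compared with Theorem~\ref{tw1} and is the reason $\max_{i,j}\|\nabla\aij\|_{L^\infty(\OT)}<\infty$ must be assumed.
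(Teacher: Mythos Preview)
Your approach is essentially the same as the paper's: test the Galerkin system with $-\Delta u^n$, use the coercivity identity for the fractional derivative (the paper's Lemma~\ref{Alemat} applied to $w=\nabla u^n$), absorb the coefficient-gradient and lower-order terms, and pass to the limit; the continuity argument under (\ref{intmu}) is also the same (convolving $\frac{d}{dt}(k*[u-u_0])\in L^2(0,T;\ld)$ with $g\in L^2(0,T)$).

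Two minor deviations are worth noting. First, the paper does \emph{not} regularize $\mu$; the approximate problem uses $\mu$ as is, and solvability of the Galerkin ODE system is obtained via the kernel $g$ of Theorem~\ref{fint} (your mention of ``$\mu\mapsto\mu_{1/n}$'' is unnecessary). Second, and more interestingly, the paper avoids running a second fractional Gronwall at the $H^1$ level: after integrating the inequality $\dm\|\nabla u^n\|_{\ld}^2+\tfrac{\lambda}{32}\|D^2 u^n\|_{\ld}^2\le \bar c\,\|\nabla u^n\|_{\ld}^2+\tfrac{8}{\lambda}\|f_{1/n}\|_{\ld}^2$ over $(0,t)$, the term $\bar c\int_0^t\|\nabla u^n\|_{\ld}^2\,d\tau$ on the right is already bounded by Lemma~\ref{oszac1} from the weak-solution estimate, so no further Gronwall is needed. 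Your Gronwall-at-$H^1$ route also works and yields the extra $L^\infty(0,T;\hjzo)$ bound, but the paper's shortcut is cleaner.
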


\no The last theorem is devoted to examination of the right-inverse operator to $\dm$.

\begin{theorem}
If $\mu$ satisfies (\ref{mainmu}), then there exists nonnegative $g\in L^{1}_{loc}[0,\infty)$ such that the operator of fractional integration $\im$, defined by the formula $ \im u =g*u$ satisfies
\eqq{(\dm \im u)(t) = u(t) \hd \m{ for } \hd  u \in L^{\infty}(0,T),}{fi1}
\eqq{(\im\dm u)(t) = u(t) - u(0) \hd \m{ for } \hd  u \in AC[0,T].}{fi2}
Furthermore, $g$ satisfies the estimate
\eqq{g(t)\leq c\max\{t^{\gamma-1},t^{-\gamma} \}}{estig}
for some positive $c$ and $\gamma\in (0,\frac{1}{2})$.
\label{fint}
\end{theorem}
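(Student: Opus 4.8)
The plan is to build the kernel $g$ as the distributional right-inverse (convolution) kernel to $k$, where $k(t)=\izj \frac{t^{-\al}}{\gja}\mu(\al)\,d\al$ is the kernel representing $\dm$ via $\dm u = \frac{d}{dt}(k*(u-u(0)))$. Working with the Laplace transform, one has $\widehat{k}(s)=\izj s^{\al-1}\mu(\al)\,d\al$, and the natural candidate is $\widehat{g}(s)=\frac{1}{s\widehat{k}(s)}$, so that $\widehat{k}\,\widehat{g}=1/s$, i.e. $k*g \equiv 1$ on $(0,\infty)$; this identity is exactly what makes $\im=g*\,\cdot\,$ a right inverse of $\dm$. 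So the first step is to record these Laplace-transform heuristics and then, crucially, \emph{produce $g$ rigorously without relying on Laplace inversion for general integrable $\mu$}. I would instead define $g$ as the solution of the renewal-type (Volterra) equation
\eqq{(k*g)(t)=1 \hd \m{ for a.a. } t>0,}{renewal}
and prove existence, nonnegativity, local integrability, and the bound (\ref{estig}) by a direct analysis of this equation.

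The key steps, in order. (1) Show $k\in L^1_{loc}[0,\infty)$ and analyze its behavior near $0$: since $\mu\in L^1(0,1)$, $k(t)=\izj \frac{t^{-\al}}{\gja}\mu(\al)\,d\al$ is finite for $t>0$, is nonincreasing, and $\izt k(\tau)\,d\tau = \izj \frac{t^{1-\al}}{\Gamma(2-\al)}\mu(\al)\,d\al<\infty$. One extracts from (\ref{mainmu}) a sub-interval on which $\mu$ has positive mass, giving a two-sided comparison $c_1 t^{-\al_0}\lesssim k(t)\lesssim c_2 t^{-\al_1}$ (for small $t$, with $0<\al_1\le\al_0<1$) — this is where the exponent $\gamma\in(0,\tfrac12)$ in (\ref{estig}) originates. (2) Solve (\ref{renewal}). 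Because $k$ is singular at $0$ but locally integrable and positive, the resolvent kernel $g$ exists as a nonnegative locally integrable function; I would construct it as $g=\sum_{n\ge1} k^{*n}$-type Neumann series after an appropriate normalization, or more cleanly by differentiating: writing $G(t)=\izt g$, equation (\ref{renewal}) becomes $k*g=1$, and one can get $g$ from the known theory of completely monotone / Sonine kernels, but since $\mu$ need only be integrable I would argue directly via the Gripenberg-type existence theorem for Volterra equations with nonnegative kernels. (3) Derive the pointwise bound (\ref{estig}): split $(k*g)(t)=1$ into contributions from $\tau$ near $0$ and $\tau$ near $t$, use the small-$t$ two-sided bounds on $k$ from step (1), and bootstrap to get $g(t)\le c\max\{t^{\gamma-1},t^{-\gamma}\}$; the $t^{\gamma-1}$ part controls the singularity at $0$ and the $t^{-\gamma}$ part the decay, with $\gamma<\tfrac12$ coming from the admissible range of $\al_0,\al_1$. (4) Prove (\ref{fi1}): for $u\in L^\infty(0,T)$, $\im u=g*u$, and $\dm(g*u)=\frac{d}{dt}\big(k*((g*u)-(g*u)(0))\big)$; since $g\in L^1_{loc}$ and $g*u$ is continuous with $(g*u)(0)=0$, associativity of convolution gives $k*(g*u)=(k*g)*u=1*u=\izt u$, hence $\dm(g*u)=\frac{d}{dt}\izt u = u$. (5) Prove (\ref{fi2}): for $u\in AC[0,T]$, $\dm u=k*u'$ (standard rewriting of the distributed Caputo derivative for absolutely continuous $u$), so $\im\dm u=g*(k*u')=(g*k)*u'=1*u'=\izt u'=u(t)-u(0)$.

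The main obstacle I anticipate is step (2)–(3): constructing $g$ and especially proving the explicit bound (\ref{estig}) \emph{under only the hypothesis $\mu\in L^1(0,1)$}, without the extra regularity (continuity, $\mu(0)\ne0$, power-type behavior at $0$) that the cited works assume. The difficulty is that $k$ can be badly behaved — it need not be comparable to a single power $t^{-\al}$, and its Laplace transform $\widehat{k}(s)$ can fail the nice asymptotics that make classical Tauberian inversion applicable. The remedy is to avoid any asymptotic/Tauberian argument and instead use: (a) the monotonicity and local integrability of $k$, which already force existence of a nonnegative resolvent $g$; and (b) a soft two-sided comparison obtained by restricting $\mu$ to a level set $\{\mu\ge\delta\}\cap(a,b)$ of positive measure — this yields $k(t)\ge c\,t^{-a}$-type lower bounds on small intervals and, combined with the crude upper bound $k(t)\le \frac{C}{\gja_{\min}}t^{-b}$-type estimates, pins down enough to run the bootstrap in step (3). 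Uniqueness of $g$ (and hence well-definedness of $\im$) follows from the fact that a nonnegative $L^1_{loc}$ solution of the homogeneous equation $k*h=0$ must vanish, by the Titchmarsh convolution theorem.
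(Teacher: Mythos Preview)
Your steps (4) and (5) are fine and essentially coincide with the paper's argument: once one has a nonnegative $g\in L^{1}_{loc}$ with $k*g\equiv 1$, the identities (\ref{fi1}) and (\ref{fi2}) follow exactly as you write. The central identity $k*g=1$ is indeed the heart of the matter.

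The genuine gap is in your steps (2)--(3). For the construction of $g$, none of the routes you list actually lands. A Neumann series $\sum k^{*n}$ solves equations of the form $g=h+k*g$, not $k*g=1$; the ``appropriate normalization'' that would repair this is never specified. Invoking a ``Gripenberg-type existence theorem'' is too vague to count as a proof, and you explicitly decline to use the Sonine/completely-monotone route (which would in fact work here, since $k$ is a nonnegative superposition of the completely monotone functions $t^{-\al}/\gja$). More seriously, your derivation of the pointwise bound (\ref{estig}) by ``bootstrapping'' the equation $k*g=1$ is not an argument. From a lower bound $k(t)\ge c\,t^{-\gamma}$ one only gets $I^{1-\gamma}g(t)\le C$, an integral bound; turning this into a pointwise bound $g(t)\le C t^{\gamma-1}$ requires monotonicity of $g$, which you never establish and which is not a consequence of the renewal equation alone. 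The large-$t$ part $g(t)\le c\,t^{-\gamma}$ has no argument at all in your outline.

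By contrast, the paper does \emph{not} avoid the Laplace transform. It verifies that $F(p)=\frac{1}{p\widehat{k}(p)}$ satisfies the hypotheses of an explicit inverse-Laplace lemma (Lemma~\ref{odwrotna}, after Bobylev--Cercignani), which works under only (\ref{mainmu}) because the lower bounds on $|\int_0^1 p^{\al}\mu(\al)\,d\al|$ come from restricting to the interval $[\gamma,1-\gamma]$ furnished by (\ref{defg}). This yields the closed formula (\ref{calka}) for $g$, from which nonnegativity, monotonicity, and the bound (\ref{estig}) are read off by elementary estimates on $\int_0^1 \sin(\pi\al)r^{\al}\mu(\al)\,d\al$ from below (Proposition~\ref{oszacg1}). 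The explicit integral representation is precisely what makes the pointwise estimate tractable; your proposal discards it without supplying a substitute.
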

As we will see  later, the constant $\gamma$ is taken from (\ref{defg}). The  properties of the fractional integration operator $\im$ related to  the distributed order Caputo derivative will be presented in section~\ref{fintsection}.

The paper is organized as follows. In second section we prove theorem~\ref{fint}. In next three sections we prove theorems~\ref{tw1},  \ref{tw2} and \ref{regularne}, respectively.

\section{The fractional integration operator $\im$}

\label{fintsection}

In this section we will define the fractional integration operator $\im$ and prove the useful properties of its kernel $g$. First we note that the distributed order Caputo derivative is well defined for absolutely continuous functions and we have
\eqq{\m{ if } \hd  f\in AC[0,T], \m{ \hd then \hd } \dm f \in L^{1}(0,T). }{Cac}
Indeed, for $f \in AC[0,T]$  we write
\[
(\dm f)(t) = (k*f')(t) = \poch (k*f)(t) - k(t)f(0),
\]
where
\eqq{k(t) = \izj  t^{-\al} \mg d \al. }{k}
and $*$ here and in the whole paper denotes the convolution on $(0,\infty)$, i.e.  $(k*f)(t)=\izt k(t-\tau)f(\tau)d\tau.$ Then  $k \in L^{1}(0,T)$, because using the fact that  $\frac{1}{2} \leq \Gamma(x) $ on $[1,2]$ we have
\[
\int_{0}^{T}\abs{\izj  t^{-\al}\mg d\al}dt = \int_{0}^{T}\izj t^{-\al} \mg d\al dt
\]
\eqq{
=  \izj T^{1-\al} \frac{\ma}{\Gamma(2-\al)}d\al \leq 2 \max\{1,T\} \izj \mu(\alpha) \ald <\infty.
}{xc}
By Young's theorem we obtain that $\dm f\in L^{1}(0,T).$

The main assumption (\ref{mainmu}) has the~following consequence: if we denote
\eqq{
c_{\mu} =\izj \ma d\al  > 0,
}{aab}
then
\eqq{
\exists  \gamma \in (0, \frac{1}{2})  \hd \  \int_{\gamma}^{1-\gamma} \ma d\al = \frac{1-\gamma}{2} c_{\mu} > 0.
}{defg}
This statement easily follows from Darboux theorem applicated to function $h:[0,\frac{1}{2}]\rightarrow \mr$ defined by
\[
h(x) = \int_{x}^{1-x}\ma d\al - \frac{1-x}{2}\izj \ma d\al.
\]
The constant $\gamma$ related to $\mu$ will play the crucial role in our considerations.

It is already known, see for example \cite{Kochubei},\cite{Kochubei2},\cite{Kochubei3}, that under some assumptions concerning $\mu$, there exists the right  inverse  operator to the distributed order  Caputo derivative. It means that there exists the kernel $g$ such that if we denote by $\im u = g*u,$ then for $u$ regular enough we have $(\dm\im u)(t) = u(t)$.
The properties of the kernel $g$ has been already studied in above mentioned papers. For example, in \cite{Kochubei}, it has been shown that if
\[
\mu \in C^{3}[0,1], \ \  \mu(1)\neq 0 \textrm{ \hd and either \hd  }\mu(0) \neq 0\textrm{ \hd or \hd }\ma \approx a \al^{\nu}
\]
for some $a > 0, \nu > 0$ then
\eqq{g(t)\hspace{-0.1cm} = \hspace{-0.1cm}\frac{1}{\pi}\izi e^{-rt}\frac{\izj \sin (\pi \al)r^{\al}\mu(\al)d\al}{\left(\izj \cos (\pi \al)r^{\al}\mu(\al)d\al\right)^{2}+\left(\izj \sin (\pi \al)r^{\al}\mu(\al)d\al\right)^{2}}dr}{calka}
and for small values of $t$
\[
g(t) \leq c \ln \frac{1}{t}.
\]
The last estimate  implies that for all $p \in (1,\infty)$ we have $g(t) \in L^{p}_{loc}[0,\infty).$
We will obtain similar result to the above mentioned one, but for $\mu$ with much lower regularity.

We will divide the proof of theorem~\ref{fint} into a few steps.
Firstly we will proceed as in \cite{Kochubei}, i.e. we investigate the Laplace transform of function $k$, given by (\ref{k}),
\[
\tl{k}(p) = \izj p^{\al -1 }\mu(\al)d\al.
\]
The first step is the following proposition.
\begin{prop}
For the Laplace transform of $g$ given by (\ref{calka}) we have
\eqq{
\tl{g}(p) =\frac{1}{p\tl{k}(p)},
}{lapg}
and so $k*g = 1$.
\label{propone}
\end{prop}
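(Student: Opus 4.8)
The plan is to verify the formula $\tl{g}(p)=1/(p\tl{k}(p))$ by computing the Laplace transform of the explicit integral representation (\ref{calka}) and matching it with the right-hand side, and then to deduce $k*g=1$ by applying the inverse Laplace transform. First I would record the elementary fact that $\widehat{k}(p)=\int_0^1 p^{\al-1}\mu(\al)\,d\al$, which follows from $\mathcal{L}[t^{-\al}/\Gamma(1-\al)](p)=p^{\al-1}$ for $\al\in[0,1)$ together with Fubini's theorem (justified by the integrability estimate (\ref{xc})). The branch cut of $p\mapsto p^{\al}$ along the negative real axis is the reason the representation (\ref{calka}) has the shape it does: writing $p=re^{\pm i\pi}$ on the two sides of the cut gives $p^{\al}=r^{\al}e^{\pm i\pi\al}=r^{\al}(\cos\pi\al\pm i\sin\pi\al)$.

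The core computation is to show that (\ref{calka}) is exactly the inverse Laplace transform (via the Bromwich/Hankel contour wrapped around the negative real axis) of $F(p):=1/(p\widehat{k}(p))$. Concretely, $F$ is analytic off the negative real axis (one must check $\widehat{k}(p)\ne 0$ for $\Re p>0$, which holds because $\Re(p^{\al-1})>0$ there for $\al\in[0,1)$, and the $\al=1$ contribution is $\mu(1)\ge 0$ — so $\Re\widehat{k}(p)>0$), it decays appropriately at infinity and is integrable near $0$ on the contour, so the inversion integral collapses to an integral of the jump of $F$ across the cut. Computing that jump: with $p=re^{\pm i\pi}$,
\[
p\widehat{k}(p)=re^{\pm i\pi}\int_0^1 r^{\al-1}e^{\pm i\pi(\al-1)}\mu(\al)\,d\al=\int_0^1 r^{\al}e^{\pm i\pi\al}\mu(\al)\,d\al=:A(r)\pm iB(r),
\]
where $A(r)=\int_0^1 r^{\al}\cos(\pi\al)\mu(\al)\,d\al$ and $B(r)=\int_0^1 r^{\al}\sin(\pi\al)\mu(\al)\,d\al$. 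Hence $F(re^{\pm i\pi})=1/(A(r)\pm iB(r))=(A(r)\mp iB(r))/(A(r)^2+B(r)^2)$, and the difference of the two boundary values is $-2iB(r)/(A^2+B^2)$. Plugging this into the Hankel-contour inversion formula $g(t)=\frac{1}{2\pi i}\int F(p)e^{pt}\,dp$ and using $e^{pt}=e^{-rt}$ on the cut yields precisely
\[
g(t)=\frac{1}{\pi}\int_0^\infty e^{-rt}\frac{B(r)}{A(r)^2+B(r)^2}\,dr,
\]
which is (\ref{calka}). Therefore $\widehat{g}(p)=F(p)=1/(p\widehat{k}(p))$, i.e. (\ref{lapg}).

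Finally, from (\ref{lapg}) we get $\widehat{k}(p)\widehat{g}(p)=1/p=\mathcal{L}[\mathbf{1}](p)$; since $k\in L^1(0,T)$ and (as one checks from the estimate (\ref{estig}), or directly) $g\in L^1_{loc}[0,\infty)$, the convolution $k*g$ is a well-defined locally integrable function whose Laplace transform equals $\widehat{k}\,\widehat{g}=1/p$, and by injectivity of the Laplace transform on $L^1_{loc}$ with appropriate growth we conclude $k*g\equiv 1$. I expect the main obstacle to be the contour-deformation argument: one must justify that the Bromwich integral for $F$ may be collapsed onto the Hankel contour around the negative real axis — this requires bounding $|F(p)|=1/|p\widehat{k}(p)|$ on large circular arcs and near the origin, and the delicate point is near $p=0$, where $\widehat{k}(p)=\int_0^1 p^{\al-1}\mu(\al)\,d\al$ can blow up (so $F(p)\to 0$, which is good) but one needs quantitative control; the estimate (\ref{defg}) on $\mu$, which isolates a bulk of the mass of $\mu$ on $[\gamma,1-\gamma]$, is exactly what gives $|\widehat{k}(p)|\gtrsim |p|^{\gamma-1}$ for small $|p|$ and hence the convergence of all the integrals involved. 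Alternatively, one can sidestep some of this by taking (\ref{calka}) as the \emph{definition} of $g$, directly computing $\widehat{g}$ by Fubini (interchanging the $r$-integral and the Laplace $t$-integral) and a partial-fractions identity $\frac{B(r)}{A^2+B^2}=\Im\frac{1}{A-iB}$, and verifying the result equals $1/(p\widehat{k}(p))$; this is cleaner but still needs the small-$r$ and large-$r$ integrability of the kernel, again supplied by (\ref{defg}).
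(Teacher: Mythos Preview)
Your proposal is correct and follows essentially the same route as the paper: both identify $F(p)=1/(p\widehat{k}(p))$ and show it is the Laplace transform of the function $g$ given by (\ref{calka}) via a Hankel-contour inversion, with (\ref{defg}) supplying the quantitative bounds on $|F|$ near $0$ and near $\infty$. The paper packages the contour-deformation step into a standalone appendix lemma (Lemma~\ref{odwrotna}) whose four hypotheses it verifies in turn; your identification of the ``main obstacle'' as justifying the collapse of the Bromwich contour and bounding $|F|$ uniformly in the argument is exactly the content of that verification (one minor point to tighten: you need $p\widehat{k}(p)\neq 0$ on all of $\mathbb{C}\setminus(-\infty,0]$, not just for $\Re p>0$, and the paper handles the left half of the slit plane by observing that the imaginary part $\int_0^1 r^{\al}\sin(\al\phi)\mu(\al)\,d\al$ is nonzero for $0<|\phi|<\pi$).
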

The main difficulty is to obtain the equality (\ref{lapg}) only under the assumption (\ref{mainmu}). The proof of proposition~\ref{propone} will base on  Lemma \ref{odwrotna} from the appendix.
\begin{proof}[Proof of proposition~\ref{propone}]
We shall show that (\ref{lapg}) follows from lemma~\ref{odwrotna}. Indeed, we denote  $F(p):=\frac{1}{p\tl{k}(p)}$ and we prove  that $F(p)$ satisfies assumptions of Lemma \ref{odwrotna}.

\no \textit{Assumption 1).}
Let us denote by $r = |p|,$ $\vf=\arg p$. Choosing the main branch of logarithm we see that
\[
p \tl{k}(p)= \izj p^{\al}\mu(\al)d\al = \izj r^{\al}\cos(\al\vf)\ma d\al + i\izj r^{\al}\sin(\al\vf)\ma d\al
\]
is analytic for $p \in \mathbb{C} \setminus (-\infty,0].$ The imaginary part is nonzero for  $|\vf|\in (0,\pi) $ and for $\vf =0$ the real part is positive. Thus we have
\[
p \tl{k}(p)=\izj p^{\al}\mu(\al)d\al \not = 0 \m{ \hd for \hd } p \in \mathbb{C} \setminus (-\infty,0],
\]
and $F(p)=\frac{1}{p\tl{k}(p)}$ is analytic for  $p \in \mathbb{C} \setminus (-\infty,0].$

\no \textit{Assumtion 2).} For $t>0$ the quantity $\izj t^{\al}\sin(\al\pi)\ma d\al$ is positive, hence $F^{\pm}(t)= \lim\limits_{\vf\rightarrow \pi^{-}}F(te^{\pm i \vf})$ exists. By equality
\[
\lim_{\vf\rightarrow \pi^{-}} r^{\al}e^{i \vf \al}= r^{\al} e^{i\al\pi} = \overline{r^{\al} e^{-i\al\pi}} = \overline{ \lim_{\vf\rightarrow \pi^{-}}r^{\al}e^{-i \vf \al}}.
\]
and Lebesgue dominated convergence theorem we  have  $F^{+} = \overline{F^{-}}$.

\no \textit{Assumtion 3).} Firstly we have to estimate $\abs{\izj p^{\al}\mu (\al)d\al}$ from below. We may assume that $\eta \in (0,\frac{\pi}{2})$.   If $|\arg(p)| <\pi - \eta$, then we write
$p = r[\cos(\pm\vf) + i \sin (\pm\vf)]$ for $\vf \in [0,\pi - \eta ) $. We have to consider two cases. If $\vf \in [0,\frac{\pi}{2}]$, then
\[
\abs{\izj p^{\al}\mu(\al)d\al}
\geq \abs{\izj r^{\al}\cos(\pm\vf\al)\mu(\al)d\al}  = \izj r^{\al}\cos(\vf\al)\mu(\al)d\al
\]
\[
\geq  \int_{\gamma}^{1-\gamma} r^{\al}\cos(\vf\al)\mu(\al)d\al \geq \cos{\frac{\pi}{2}(1-\gamma)} \int_{\gamma}^{1-\gamma} r^{\al}\mu(\al)d\al,
\]
where  $\gamma$ comes from (\ref{defg}). If  $\vf \in (\frac{\pi}{2},\pi- \eta)$ then  we have
\[
\abs{\izj p^{\al}\mu(\al)d\al}
\geq \abs{\izj r^{\al}\sin(\pm\vf\al)\mu(\al)d\al}
\]
\[
 = \izj r^{\al}\sin(\vf\al)\mu(\al)d\al
\geq \int_{\gamma}^{1-\gamma}r^{\al}\sin(\vf\al)\ma d\al
\]
Having in mind that $\gamma \in (0,\frac{1}{2}),$ we obtain the estimate
\[
\sin \gamma\vf \geq \sin \gamma\frac{\pi}{2}, \hd  \hd
\sin(1-\gamma)\vf \geq \min\{\sin{\gamma\pi},\frac{\sqrt{2}}{2}\}.
\]
Hence, if $\hd  | \arg(p)|<\pi - \eta$, we have
\[
\abs{\izj p^{\al}\mu(\al)d\al} \geq \min\left\{ \sin{\gamma \frac{\pi}{2}},\sin{\gamma\pi},\frac{\sqrt{2}}{2} \right\} \int_{\gamma}^{1-\gamma}r^{\al}\ma d\al.
\]
Thus after applying (\ref{defg}) and  under the condition $|\arg{p}|\leq \pi - \eta $   we get   the following estimates
\eqq{\abs{\izj p^{\al}\mu(\al)d\al} \geq \min\left\{ \sin{\gamma \frac{\pi}{2}},\sin{\gamma\pi},\frac{\sqrt{2}}{2} \right\} \frac{1-\gamma}{2}c_{\mu}  r^{1-\gamma} \equiv \tilde{c}r^{1-\gamma}}{kpm}
for $r\leq 1$ and
\eqq{
\abs{\izj p^{\al}\mu(\al)d\al} \geq \tilde{c}  r^{\gamma}\textrm{ \hd  for } \hd r > 1,}{kpd}
where $r=|p|$ and  constant $\tilde{c}$ depends only on $\mu$.

\no  Using (\ref{kpd}) we obtain that for $p = r e^{i\pm\vf}$ and $\phi \in [0,\pi - \eta)$
\[
\abs{F(p)}=\frac{1}{\abs{p\tl{k}(p)}}=\frac{1}{\abs{\izj p^{\al}\mu(\al)d\al}} \leq \frac{1}{\tilde{c}} r^{-\gamma} \longrightarrow 0 \textrm{ \hd as \hd  } r\rightarrow \infty.
\]
Similarly, using (\ref{kpm})  we get
\[
\abs{p}\abs{F(p)}=\frac{\abs{p}}{\abs{p\tl{k}(p)}}=\frac{r}{\abs{\izj p^{\al}\mu(\al)d\al}} \leq \frac{1}{\tilde{c}} r^{\gamma} \longrightarrow 0  \textrm{ \hd as \hd   } r\rightarrow 0.
\]
Thus the third assumption is satisfied with any $\eta \in (0, \frac{\pi}{2})$.

\no \textit{Assumtion 4). } We denote
\[
a(r) = \left\{ \begin{array}{ll}
 \frac{1}{\tilde{c}} r^{\gamma-1}& \textrm{ for }  \hd r \leq 1 \\
 \frac{1}{\tilde{c}}	r^{-\gamma} &\textrm{ for } \hd r > 1 \\
\end{array} \right.,
\]
where $\tilde{c}=\tilde{c}(\mu)$ is as above. Then from (\ref{kpm}) and (\ref{kpd}) we have  $|F(p)| \leq a(|p|)$, where  $p = |p|e^{i\phi}$ and  $\phi \in (\frac{\pi}{2}, \pi)$. Using the fact that  $\gamma $ belongs to  $(0,1)$ we deduce that  function $a(r)$ satisfies the desired condition.

We have just shown that $F(p)=\frac{1}{p\tl{k}(p)}$ satisfies assumptions of lemma \ref{odwrotna}, thus we can write that
\[
\frac{1}{p\tl{k}(p)} = \izi e^{-pt} g(t)dt,
\]
where
\[
g(t) = \frac{1}{\pi}\izi e^{-rt} \Im\left[\izj r^{\al}e^{-i\pi\al}\ma d\al \right]^{-1}dr
\]
\[
=\frac{1}{\pi}\izi e^{-rt}\frac{\izj \sin (\pi \al)r^{\al}\mu(\al)d\al}{\left(\izj \cos (\pi \al)r^{\al}\mu(\al)d\al\right)^{2}+\left(\izj \sin (\pi \al)r^{\al}\mu(\al)d\al\right)^{2}}dr.
\]
Thus we obtained that $\frac{1}{p\tl{k}(p)} = \tl{g}(p),$ which implies that $g*k = 1$ and the proof of proposition is completed.
\end{proof}

\begin{prop}\label{oszacg1}
The function $g$ given by (\ref{calka}) satisfies (\ref{estig}) with $\gamma\in (0, \frac{1}{2})$ given by (\ref{defg}) and $c$ depends only on $\mu$.
\end{prop}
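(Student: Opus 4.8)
The plan is to estimate the explicit representation (\ref{calka}) of $g$ directly, relying on the lower bounds for $\abs{\izj p^{\al}\mu(\al)d\al}$ that were already established in the proof of proposition~\ref{propone}. Writing, for $r>0$,
\[
S(r)=\izj \sin(\pi\al)r^{\al}\mu(\al)d\al,\qquad C(r)=\izj\cos(\pi\al)r^{\al}\mu(\al)d\al,
\]
we have $g(t)=\frac{1}{\pi}\izi e^{-rt}\frac{S(r)}{C(r)^{2}+S(r)^{2}}dr$. Since $\mu\geq 0$, $S(r)\geq 0$, so $g\geq 0$, and moreover $S(r)\leq\sqrt{C(r)^{2}+S(r)^{2}}$, so the integrand is bounded by $e^{-rt}\big(C(r)^{2}+S(r)^{2}\big)^{-1/2}$. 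Thus everything reduces to a lower bound for $C(r)^{2}+S(r)^{2}$.

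For that lower bound I would observe that $C(r)+iS(r)=\izj r^{\al}e^{i\pi\al}\mu(\al)d\al$ is the limit of $\izj r^{\al}e^{i\vf\al}\mu(\al)d\al$ as $\vf\to\pi^{-}$ (dominated convergence, the integrand being bounded by $\max\{1,r\}\mu(\al)\in L^{1}(0,1)$). Letting $\vf\to\pi^{-}$, i.e. $\eta\to 0^{+}$, in (\ref{kpm}) and (\ref{kpd}) — which is legitimate because $\tilde{c}$ there is independent of $\eta$ — gives
\[
\sqrt{C(r)^{2}+S(r)^{2}}\geq \tilde{c}\,r^{1-\gamma}\ \ (0<r\leq 1),\qquad \sqrt{C(r)^{2}+S(r)^{2}}\geq \tilde{c}\,r^{\gamma}\ \ (r>1),
\]
with $\gamma\in(0,\frac12)$ from (\ref{defg}) and $\tilde{c}=\tilde{c}(\mu)$. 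Alternatively one can avoid the limit and repeat the computation in the proof of proposition~\ref{propone} verbatim at $\vf=\pi$: since $\gamma<\frac12$ one has $\gamma\pi<\frac\pi2$ and $(1-\gamma)\pi\in(\frac\pi2,\pi)$ with $\sin((1-\gamma)\pi)=\sin(\gamma\pi)$, so the minima of $\sin(\gamma\al\vf)$ and $\sin((1-\gamma)\al\vf)$ over $\al\in[\gamma,1-\gamma]$ are controlled exactly as there. I expect this passage from the open sector $|\arg p|\leq\pi-\eta$, where (\ref{kpm})--(\ref{kpd}) were proved, to the boundary ray $\arg p=\pi$ to be the only point that requires care.

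Finally I would combine these estimates with integration. From the two bounds,
\[
g(t)\leq\frac{1}{\pi\tilde{c}}\left(\int_{0}^{1}e^{-rt}r^{\gamma-1}dr+\int_{1}^{\infty}e^{-rt}r^{-\gamma}dr\right)\leq\frac{1}{\pi\tilde{c}}\left(\izi e^{-rt}r^{\gamma-1}dr+\izi e^{-rt}r^{-\gamma}dr\right),
\]
both integrals being finite for $t>0$ precisely because $\gamma\in(0,1)$ (integrability of $r^{\gamma-1}$ at $r=0$ and of $e^{-rt}r^{-\gamma}$ at infinity). The substitution $s=rt$ gives $\izi e^{-rt}r^{\sigma-1}dr=\Gamma(\sigma)t^{-\sigma}$, which applied with $\sigma=\gamma$ and $\sigma=1-\gamma$ yields
\[
g(t)\leq\frac{1}{\pi\tilde{c}}\big(\Gamma(\gamma)t^{-\gamma}+\Gamma(1-\gamma)t^{\gamma-1}\big)\leq\frac{2\big(\Gamma(\gamma)+\Gamma(1-\gamma)\big)}{\pi\tilde{c}}\,\max\{t^{\gamma-1},t^{-\gamma}\},
\]
which is (\ref{estig}) with $c=2\big(\Gamma(\gamma)+\Gamma(1-\gamma)\big)/(\pi\tilde{c})$; since $\gamma$ and $\tilde{c}$ depend only on $\mu$, so does $c$, and the proof is complete.
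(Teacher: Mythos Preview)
Your proof is correct and follows essentially the same strategy as the paper: bound the integrand of (\ref{calka}) by a reciprocal that is controlled via (\ref{defg}), then evaluate the resulting Gamma integrals. The only cosmetic difference is that the paper drops the cosine term entirely (using $S/(C^{2}+S^{2})\leq 1/S$) and bounds $S(r)$ from below directly at $\vf=\pi$, whereas you use $S/(C^{2}+S^{2})\leq (C^{2}+S^{2})^{-1/2}$ and recycle the sector estimates (\ref{kpm})--(\ref{kpd}) from proposition~\ref{propone} via the limit $\vf\to\pi^{-}$; both routes yield the same $r^{1-\gamma}$/$r^{\gamma}$ lower bounds and the same conclusion.
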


\begin{proof}
Ignoring expression with cosine in denominator we get that
\[
g(t) \leq  \frac{1}{\pi}\izi e^{-rt}\frac{1}{\izj \sin (\pi \al)r^{\al}\mu(\al)d\al}dr.
\]
The expression under integral is non negative, thus if we take $\gamma$ from  (\ref{defg}), then  for $r\leq 1$ we can write
\[
\izj \sin (\pi \al)r^{\al}\mu(\al)d\al \hspace{-0.1cm} \geq \hspace{-0.1cm} \int_{\gamma}^{1-\gamma} \sin (\pi \al)r^{\al}\mu(\al)d\al \geq \frac{1-\gamma}{2} c_{\mu} \sin(\pi\gamma) r^{1-\gamma}\hspace{-0.3cm}.
\]
Using the identity
\eqq{\frac{\pi}{\sin\pi\gamma} = \ggg\gjg }{gsin}
we obtain that for $r\leq 1$
\[
\frac{1}{\izj \sin (\pi \al)r^{\al}\mu(\al)d\al} \leq \frac{2\ggg\gjg}{\pi (1-\gamma)c_{\mu}} r^{\gamma-1}.
\]
Similarly for $r>1$
\[
\izj \sin (\pi \al)r^{\al}\mu(\al)d\al \geq \int_{\gamma}^{1-\gamma} \sin (\pi \al)r^{\al}\mu(\al)d\al \geq \frac{1-\gamma}{2} c_{\mu}\sin(\pi\gamma) r^{\gamma} ,
\]
thus from (\ref{gsin}) for $r>1$
\[
\frac{1}{\izj \sin (\pi \al)r^{\al}\mu(\al)d\al} \leq \frac{2\ggg\gjg}{\pi (1-\gamma)c_{\mu}} r^{-\gamma}
\]
and we arrive at
\[
g(t) \leq \frac{2\ggg\gjg}{\pi^2 (1-\gamma)c_{\mu}}\left(\int_{1}^{\infty} e^{-rt} r^{-\gamma} dr + \izj e^{-rt} r^{\gamma-1}dr\right).
\]
With the first integral  we deal as follows
\[
\int_{1}^{\infty} e^{-rt} r^{-\gamma} dr \leq \izi e^{-rt} r^{-\gamma} dr = \gjg t^{\gamma-1}.
\]
Similarly we estimate  the second integral
\[
\izj e^{-rt} r^{\gamma-1}dr \leq \izi e^{-rt} r^{\gamma-1}dr = \Gamma(\gamma) t^{-\gamma}.
\]
Thus we have
\[
g(t) \leq \frac{2\ggg\gjg}{\pi^2 (1-\gamma)c_{\mu}}\left[\gjg t^{\gamma-1} + \Gamma(\gamma) t^{-\gamma} \right].
\]
Having in mind that $1-\gamma > \frac{1}{2}$ we get $\gjg \leq \Gamma(\frac{1}{2})=\sqrt{\pi}$ we obtain
\[
g(t) \leq \frac{4\ggg\sqrt{\pi}}{\pi^2 c_{\mu}}\left[\sqrt{\pi} t^{\gamma-1} + \Gamma(\gamma) t^{-\gamma} \right],
\]
and the  proof is finished.
\end{proof}

\begin{prop}
Function $g$ defined by (\ref{calka}) is continuous on $(0,\infty)$ and for $\gamma$ given by (\ref{defg}) we have
\eqq{\lim_{t\rightarrow 0^{+} }t^{1-\gamma } g(t)=0.  }{limgz}
In particular, function $t^{1-\gamma } g(t)$ is continuous on $[0, \infty)$.
\label{contg}
\end{prop}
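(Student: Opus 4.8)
The plan is to work directly from the integral representation~(\ref{calka}) of $g$. For continuity on $(0,\infty)$, I would fix a compact interval $[t_0, t_1] \subset (0,\infty)$ and apply the dominated convergence theorem to the $r$-integral: the integrand is continuous in $(t,r)$, and on $[t_0,t_1]$ it is dominated by the $t$-independent function obtained by replacing $e^{-rt}$ with $e^{-rt_0}$ together with the bound on $1/\izj \sin(\pi\al)r^{\al}\mu(\al)d\al$ already established in the proof of Proposition~\ref{oszacg1} (namely $\leq C r^{\gamma-1}$ for $r\leq 1$ and $\leq C r^{-\gamma}$ for $r>1$, with $C = C(\mu)$). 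Since $e^{-rt_0}r^{\gamma-1}$ and $e^{-rt_0}r^{-\gamma}$ are both integrable on the relevant ranges (using $\gamma\in(0,\frac12)$), continuity follows.

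For the limit~(\ref{limgz}), I would reuse the pointwise bound
\[
g(t) \leq \frac{2\ggg\gjg}{\pi^2(1-\gamma)c_{\mu}}\left(\int_{1}^{\infty}e^{-rt}r^{-\gamma}dr + \izj e^{-rt}r^{\gamma-1}dr\right)
\]
from Proposition~\ref{oszacg1}, so that
\[
t^{1-\gamma}g(t) \leq C\left(t^{1-\gamma}\int_{1}^{\infty}e^{-rt}r^{-\gamma}dr + t^{1-\gamma}\izj e^{-rt}r^{\gamma-1}dr\right).
\]
The second term is easy: substituting $s = rt$ gives $t^{1-\gamma}\izj e^{-rt}r^{\gamma-1}dr = \int_{0}^{t}e^{-s}s^{\gamma-1}ds$, which tends to $0$ as $t\to 0^+$ since $s^{\gamma-1}$ is integrable near $0$. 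For the first term, the same substitution gives $t^{1-\gamma}\int_{1}^{\infty}e^{-rt}r^{-\gamma}dr = \int_{t}^{\infty}e^{-s}s^{-\gamma}ds$, which actually tends to $\Gamma(1-\gamma)$, not $0$ — so this crude splitting loses too much, and the real point is that the factor $t^{1-\gamma}$ should kill the whole expression, meaning the $r\leq 1$ part must carry the decay. I would therefore split $g$ itself at $r=1$: for the tail $\int_{1}^{\infty}e^{-rt}\,(\cdots)\,dr$, bound $(\cdots)$ by $Cr^{-\gamma}$ to get $\leq C\int_{1}^{\infty}e^{-rt}r^{-\gamma}dr \leq C\int_{1}^{\infty}e^{-rt}dr = Ce^{-t}/t$, hence after multiplying by $t^{1-\gamma}$ we get $\leq Ct^{-\gamma}e^{-t} \to 0$ since $-\gamma > -1$... wait, $t^{-\gamma}\to\infty$.

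So the tail needs more care: on $r\geq 1$ I would instead use $e^{-rt}r^{-\gamma} \leq e^{-rt}$ only after first extracting decay, e.g. write $e^{-rt} = e^{-rt/2}e^{-rt/2}$ and bound one factor $e^{-rt/2}r^{-\gamma}\leq C(\gamma)(t)^{\gamma}\cdot(\text{something})$ — more cleanly, substitute $s=rt$ to get $t^{1-\gamma}\int_1^\infty e^{-rt}r^{-\gamma}dr = \int_t^\infty e^{-s}s^{-\gamma}ds$, and note this is $\Gamma(1-\gamma) - \int_0^t e^{-s}s^{-\gamma}ds$, which does NOT vanish. This signals that the bound from Proposition~\ref{oszacg1} is genuinely too lossy for~(\ref{limgz}) on the tail, and one must exploit cancellation or a sharper lower bound on the denominator for large $r$. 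The honest approach: for large $r$ the denominator $\left(\izj\cos(\pi\al)r^{\al}\mu(\al)d\al\right)^2+\left(\izj\sin(\pi\al)r^{\al}\mu(\al)d\al\right)^2 \geq \tilde c^2 r^{2\gamma}$ can be improved — in fact for $r\to\infty$ the integrals concentrate near $\al = 1$ where $\mu$ lives, and one can show $\izj\sin(\pi\al)r^{\al}\mu(\al)d\al = o(r)$ but also, crucially, that the whole fraction in~(\ref{calka}) is $o(r^{-1})$ times a slowly varying factor, so that $\int_1^\infty e^{-rt}\cdot o(1)\cdot(\text{fraction})\,dr$, multiplied by $t^{1-\gamma}$, vanishes. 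Concretely I would show: given $\ve>0$, pick $R$ so large that the fraction is $\leq \ve r^{-1}$ for $r\geq R$ (possible since $\mu\in L^1$ forces $\izj r^{\al}\mu(\al)d\al$ to grow, via dominated convergence on $r^{\al-1}\mu(\al)$, which tends to $0$ pointwise for $\al<1$, handled by a short Tauberian-type argument), then $t^{1-\gamma}\int_R^\infty e^{-rt}\ve r^{-1}dr \leq \ve t^{1-\gamma}\int_R^\infty e^{-rt}/R\,dr = \ve t^{-\gamma}e^{-Rt}/R$; still $t^{-\gamma}$ blows up.

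Given the above, the cleanest correct route — and what I would actually write — is: use the Proposition~\ref{oszacg1} bound only for $r\leq 1$, giving $\int_0^1 e^{-rt}r^{\gamma-1}dr$ which contributes $t^{\gamma-1}$-type behavior times... and accept that $t^{1-\gamma}g(t)$ is bounded, then prove~(\ref{limgz}) by the substitution argument showing the $r\leq 1$ piece gives $t^{1-\gamma}\cdot Ct^{\gamma-1}\gamma^{-1}\cdot(\text{incomplete gamma}) = C\int_0^t e^{-s}s^{\gamma-1}ds \to 0$, while the $r\geq 1$ piece is $\leq C\int_1^\infty e^{-rt}r^{-\gamma}dr$ and $t^{1-\gamma}$ times this, by substitution $\int_t^\infty e^{-s}s^{-\gamma}ds$, is bounded by $\int_0^\infty e^{-s}s^{-\gamma}ds = \Gamma(1-\gamma) < \infty$ but must be shown $\to 0$ — which it is NOT. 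Therefore I expect the \textbf{main obstacle} is precisely the large-$r$ tail: the naive estimate fails, and one needs either (i) the sharper asymptotic that $\izj\sin(\pi\al)r^{\al}\mu(\al)d\al/r \to \infty$ as $r\to\infty$ (so the fraction is $o(r^{-1})$, not merely $O(r^{-1})$), forcing the tail of $t^{1-\gamma}g(t)$ to vanish by dominated convergence after the $s=rt$ substitution with the extra $o(1)$ factor surviving; this uniform-integrability/concentration argument at $\al=1$ driven by $\mu\in L^1(0,1)$, $\mu\not\equiv0$ is the real content. I would prove that lemma first (call it: $\lim_{r\to\infty} r^{-1}\izj\sin(\pi\al)r^{\al}\mu(\al)d\al = +\infty$, or at least that $r^{\gamma}/\izj\sin(\pi\al)r^{\al}\mu(\al)d\al \to 0$), then the DCT argument on $t^{1-\gamma}g(t) = \frac{1}{\pi}\izi e^{-s}\,h(s/t)\,ds$ with $h(r) = r^{1-\gamma}\cdot[\text{fraction}](r) \to 0$ as $r\to\infty$ and $h$ bounded near $0$, closes the proof and simultaneously yields continuity of $t^{1-\gamma}g(t)$ at $t=0$.
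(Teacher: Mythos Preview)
Your continuity argument on $(0,\infty)$ is fine (the paper uses monotone convergence rather than dominated convergence, but either works). The problem is entirely in your treatment of the limit~(\ref{limgz}): you correctly diagnose that the Proposition~\ref{oszacg1} bound $g(t)\leq C(t^{\gamma-1}+t^{-\gamma})$ is too lossy on the large-$r$ tail, since $t^{1-\gamma}\cdot t^{\gamma-1}=1$ does not vanish. But your proposed fix --- a Tauberian-flavoured argument that $r^{\gamma}\big/\izj\sin(\pi\al)r^{\al}\ma d\al\to 0$ followed by a DCT after the substitution $s=rt$ --- is workable in principle yet far more elaborate than needed, and your sketch of it is not complete (you never actually supply a $t$-uniform integrable majorant for the DCT).

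The missing idea is that the constant $\gamma$ in~(\ref{defg}) is \emph{not sharp}. The same Darboux argument that produced $\gamma$ also produces some $\gamma_{*}\in(\gamma,\tfrac12)$ with $\int_{\gamma_{*}}^{1-\gamma_{*}}\ma d\al>0$ (indeed the paper takes $\gamma_{*}$ so that this integral equals $\tfrac{1-\gamma}{4}c_{\mu}$). Now simply rerun the proof of Proposition~\ref{oszacg1} with $\gamma_{*}$ in place of $\gamma$: this yields $g(t)\leq C\big(t^{\gamma_{*}-1}+t^{-\gamma_{*}}\big)$, and hence
\[
t^{1-\gamma}g(t)\leq C\big(t^{\gamma_{*}-\gamma}+t^{1-\gamma-\gamma_{*}}\big)\longrightarrow 0
\]
since both exponents $\gamma_{*}-\gamma>0$ and $1-\gamma-\gamma_{*}>1-\tfrac12-\tfrac12=0$ are strictly positive. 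That is the entire argument. Your asymptotic claim $r^{\gamma}\big/\izj\sin(\pi\al)r^{\al}\ma d\al\to 0$ is in fact true, and true precisely \emph{because} such a $\gamma_{*}$ exists (the denominator is $\geq c\,r^{\gamma_{*}}$ for $r>1$), so your route would eventually close --- but you have buried a one-line observation under several paragraphs of false starts.
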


\begin{proof}
If $t_{0}\in (0,\infty)$, then  taking advantage of the  monotonicity of $e^{-rt_{0}}-e^{-rt}=e^{-rt_{0}} [1-e^{-r(t-t_{0})}]$ with respect to $t$ we may apply Lebesgue monotone convergence theorem and we have $\lim_{t\rightarrow t_{0}}g(t)=g(t_{0})$.

\no Now we show (\ref{limgz}). Arguing similarly as in derivation of $\gamma$ we see that there exists  $\gs\in (\gamma, \frac{1}{2})$ such that
\[
  \int_{\gs}^{1-\gs} \ma d\al = \frac{1-\gamma}{4} c_{\mu} > 0.
\]
Repeating the proof of proposition~\ref{oszacg1} for $\gs$ we get
\[
g(t) \leq \frac{8\Gamma(\gs)\sqrt{\pi}}{\pi^2 c_{\mu}}\left[\sqrt{\pi} t^{\gs-1} + \Gamma(\gs) t^{-\gs} \right],
\]
which gives (\ref{limgz}).

\end{proof}

\begin{proof}[Proof of theorem~\ref{fint}]
By proposition~\ref{oszacg1} function $g$ defined by (\ref{calka}) belongs to $L^{1}_{loc}[0,\infty)$ and satisfies (\ref{estig}). It remains to show (\ref{fi1}) and (\ref{fi2}) for $\im $ defined as a convolution with $g$. Assume that $f \in L^{\infty}(0,T).$ If we recall the definition of kernel $k$ (\ref{k}) we see that
\[
(\dm \im f)(t) = (\dm g*f)(t) = \poch k*g*f - k(t)(g*f)(0).
\]
The last therm is equal to zero, because $g \in L^{1}(0,T)$ and we can estimate as follows
\[
\abs{(g*f)(t)} \leq \izt \abs{g(\tau)f(t-\tau)}d\tau \leq \norm{f}_{L^{\infty}(0,t)}\norm{g}_{L^{1}(0,t)} \rightarrow 0 \textrm{ as } t\rightarrow 0.
\]
Thus using proposition~\ref{propone} we get $(\dm \im f)(t) = \poch \izt f(\tau)d\tau = f(t)$ a.e on $(0,T).$\\
Assume now that $f \in AC[0,T].$ Then using again the equality $g*k=1$ we have
\[
(\im \dm f)(t) = \left(g*\left[\frac{d}{d\tau} k * f - k(\tau)f(0)\right]\right)(t)
\]
\[
= \left(g*\left[\frac{d}{d\tau} k * f\right]\right)(t) - f(0)(g*k)(t)
\]
\[
= \izt g(\tau) \frac{d}{dt}\int_{0}^{t-\tau}k(t-\tau-s)f(s)dsd\tau - f(0)
\]
\[
= \poch\izt  g(\tau)\int_{0}^{t-\tau} k(t-\tau-s)f(s)dsd\tau  - f(0)
\]
\[
 = \poch(g*k*f)(t)-f(0) = f(t) - f(0).
\]
\end{proof}

\begin{coro}\label{oszacg12}
Since $g$ is non increasing, from estimate (\ref{estig}) immediately follows that for fixed $T>0$
\[
g(t) \leq \cmt t^{\gamma-1} \textrm{ \hd  for \hd  } 0\leq t \leq T,
\]
where constant $\cmt $ depends only on $\mu$ and $T$.
\end{coro}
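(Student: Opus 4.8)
The plan is to split the interval $[0,T]$ at the point $t=1$ and in each piece read off the bound from (\ref{estig}), using the monotonicity of $g$ to control the range $t\in[1,T]$.

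First I would record that $g$ is non-increasing on $(0,\infty)$: this is immediate from the representation (\ref{calka}), since the factor multiplying $e^{-rt}$ under the integral sign is nonnegative and $t\mapsto e^{-rt}$ is decreasing for every fixed $r>0$, so $t_{1}\leq t_{2}$ forces $g(t_{1})\geq g(t_{2})$. Next, on $(0,1]$ the hypothesis $\gamma\in(0,\jd)$ gives $\gamma-1\leq-\gamma$, hence $t^{\gamma-1}\geq t^{-\gamma}$ for $t\leq 1$, so that $\max\{t^{\gamma-1},t^{-\gamma}\}=t^{\gamma-1}$ there; estimate (\ref{estig}) then yields directly $g(t)\leq c\,t^{\gamma-1}$ for $t\in(0,1]$. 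If $T\leq 1$ we are done with $\cmt=c$.

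If $T>1$, it remains to treat $t\in[1,T]$. Here I would use monotonicity: $g(t)\leq g(1)\leq c\max\{1,1\}=c$. On the other hand $t\mapsto t^{\gamma-1}$ is decreasing, so $t^{\gamma-1}\geq T^{\gamma-1}$ on $[1,T]$, whence $c=c\,T^{1-\gamma}T^{\gamma-1}\leq c\,T^{1-\gamma}\,t^{\gamma-1}$ for $t\in[1,T]$. Combining the two cases, the asserted inequality holds with $\cmt:=c\max\{1,T^{1-\gamma}\}$, where $c$ and $\gamma$ are the constants furnished by proposition~\ref{oszacg1} and (\ref{defg}), both depending only on $\mu$; consequently $\cmt$ depends only on $\mu$ and $T$.

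There is no genuine obstacle in this argument — it is exactly the ``immediate'' consequence announced in the statement. The only point that needs a moment's care is the direction of the inequalities in the case split: checking that $t^{\gamma-1}$ is the dominant branch of the maximum for small $t$ (which is where $\gamma<\jd$ enters) and that for $t\in[1,T]$ the other branch is absorbed into a $T$-dependent multiple of $t^{\gamma-1}$.
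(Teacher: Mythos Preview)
Your proof is correct and is precisely the elaboration the paper intends: the corollary is stated without proof, with the phrase ``since $g$ is non increasing, from estimate (\ref{estig}) immediately follows'' serving as the entire argument, and you have filled in exactly those steps. One could even dispense with the monotonicity on $[1,T]$ by noting directly that $t^{-\gamma}=t^{1-2\gamma}\,t^{\gamma-1}\leq T^{1-2\gamma}\,t^{\gamma-1}$ there, but your route via $g(t)\leq g(1)$ is equally valid and matches the hint in the statement.
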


Using estimates for the kernel $g$ we are able to give a simple proof of abstract Gronwall lemma in the case of the distributed order  Caputo derivative.

\begin{lem}\label{gronwall}
Assume that $T \in (0,\infty)$ and $a,w$ are non negative functions, integrable on $(0,T).$ Let $f$ be non decreasing and bounded on $(0,T)$. Then, if  $w$ satisfies inequality
\eqq{
w(t) \leq a(t) + f(t)(g*w)(t) \textrm{ \hd for \hd  } t \in (0,T],}{gronz}
then
\eqq{
w(t) \leq \sum_{k=0}^{\infty}f^{k}(t) (g^{[k]}*a)(t) \textrm{ \hd  for \hd  } t \in (0,T],}{gront}
where  we denote by $(g^{[k]}*a)(t):=(g*\cdots*g*a)(t),$ where $g$ is taken k times. What is more, the series above is convergent for all  $t \in (0,T].$\\
\end{lem}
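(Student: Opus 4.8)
The plan is to iterate the defining inequality (\ref{gronz}) and then control the tail of the resulting series using the estimates on the kernel $g$. First I would substitute the inequality into itself: since $w(t) \leq a(t) + f(t)(g*w)(t)$ and $g, f \geq 0$ with $f$ nondecreasing, applying $g*$ and multiplying by $f(t)$ gives $f(t)(g*w)(t) \leq f(t)(g*a)(t) + f(t)\bigl(g*[f(\cdot)(g*w)(\cdot)]\bigr)(t)$. Using monotonicity of $f$ to pull $f$ out of the inner convolution (bounding $f(\tau) \leq f(t)$ for $\tau \leq t$ inside the integral), the last term is $\leq f^2(t)(g^{[2]}*w)(t)$. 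Iterating $n$ times yields the partial-sum bound
\[
w(t) \leq \sum_{k=0}^{n-1} f^{k}(t)(g^{[k]}*a)(t) + f^{n}(t)(g^{[n]}*w)(t),
\]
so the whole statement reduces to showing the remainder $R_n(t) := f^{n}(t)(g^{[n]}*w)(t)$ tends to $0$ as $n\to\infty$, and simultaneously that $\sum_k f^{k}(t)(g^{[k]}*a)(t)$ converges.

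The key step is estimating the iterated convolutions $g^{[n]}$. By corollary~\ref{oszacg12}, $g(t) \leq \cmt t^{\gamma-1}$ on $[0,T]$ with $\gamma \in (0,\tfrac12)$. A standard induction using the Beta-function identity $\int_0^t (t-s)^{a-1}s^{b-1}ds = t^{a+b-1}B(a,b) = t^{a+b-1}\Gamma(a)\Gamma(b)/\Gamma(a+b)$ gives
\[
g^{[n]}(t) \leq \cmt^{\,n}\,\frac{\Gamma(\gamma)^{n}}{\Gamma(n\gamma)}\,t^{n\gamma-1} \quad \text{for } 0 \leq t \leq T.
\]
Since $\Gamma(n\gamma)$ grows super-exponentially (factorially) in $n$ while $\cmt^{\,n}\Gamma(\gamma)^n T^{n\gamma}$ grows only geometrically, the prefactor $\cmt^{\,n}\Gamma(\gamma)^n T^{n\gamma-1}/\Gamma(n\gamma)$ is summable in $n$; call its sum finite. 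Then $g^{[n]}*w$ is bounded by $\|w\|_{L^1(0,t)}$ times the sup of $g^{[n]}$ near its integrable singularity — more carefully, $(g^{[n]}*w)(t) \leq \cmt^n \Gamma(\gamma)^n/\Gamma(n\gamma)\int_0^t (t-s)^{n\gamma-1}w(s)\,ds$, and since $f$ is bounded by some $M$ on $(0,T)$, $R_n(t) \leq M^n \cmt^n \Gamma(\gamma)^n/\Gamma(n\gamma)\int_0^t (t-s)^{n\gamma-1}w(s)\,ds \to 0$ by the same factorial-versus-geometric comparison (the integral is at most $\max\{1,T\}^{n\gamma}\|w\|_{L^1(0,T)}$ once $n\gamma \geq 1$). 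The same bound with $a$ in place of $w$ shows $\sum_{k} f^{k}(t)(g^{[k]}*a)(t)$ converges absolutely, and passing to the limit in the partial-sum inequality gives (\ref{gront}).

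The main obstacle is the bookkeeping around the monotonicity of $f$: one must be careful that at each iteration step the factor $f$ appearing inside a convolution integral is evaluated at a time $\tau \leq t$, so that $f(\tau) \leq f(t)$ lets it be extracted as $f(t)$ — otherwise the powers $f^k(t)$ in (\ref{gront}) would not come out cleanly. A clean way to handle this is to prove the partial-sum inequality by induction on $n$, at each step applying $g*$ to the inequality $w \leq \sum_{k=0}^{n-1}f^k(\cdot)(g^{[k]}*a) + f^n(\cdot)(g^{[n]}*w)$, multiplying by $f(t)$, and using $f(\tau)^n \leq f(t)^n$ for $\tau \in (0,t)$ inside each convolution together with associativity of $*$. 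Everything else — the Beta-integral induction for $g^{[n]}$ and the factorial growth of $\Gamma(n\gamma)$ — is routine.
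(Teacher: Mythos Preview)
Your proposal is correct and follows essentially the same route as the paper: iterate the inequality using monotonicity of $f$ to extract factors $f(t)$ from the convolutions, then control both the remainder $f^{n}(t)(g^{[n]}*w)(t)$ and the tail of the series via the pointwise bound $g(t)\leq \cmt\, t^{\gamma-1}$ and the Beta-function identity, exploiting the factorial growth of $\Gamma(n\gamma)$ against the geometric growth of $(M\cmt\Gamma(\gamma)T^{\gamma})^{n}$. The paper phrases the convergence of the series via the d'Alembert ratio test rather than your direct comparison, and works with $(g^{[n]}*w)\leq \cmt^{n}\Gamma(\gamma)^{n}I^{n\gamma}w$ instead of first bounding $g^{[n]}$ pointwise, but these are cosmetic differences.
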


\begin{proof}
We convolve (\ref{gronz}) with $g$ and multiply by $f$.
\[
f(t)(g*w)(t) \leq f(t)(g*a)(t) + f(t)g*\left(f(\cdot)(g*w)(\cdot)\right)(t)
\]
\[
 \leq f(t)(g*a)(t) + f^{2}(t)(g^{[2]}*w)(t).
\]
Using this inequality in (\ref{gronz}) we obtain that
\[
w(t) \leq a(t) + f(t)(g*a)(t) + f^{2}(t)(g^{[2]}*w)(t).
\]
If we continue this procedure, we arrive at
\eqq{
w(t) \leq \sum_{k=0}^{n} f^{k}(t)(g^{[k]}*a)(t) + f^{n}(t)(g^{[n]}*w)(t)}{gronn}
We will show that for all $t\in (0,T]$ $(g^{[n]}*w)(t) \rightarrow 0$ as $n\rightarrow \infty.$
Recall, that from corollary~\ref{oszacg12} for all $t\in (0,T]$
\[
g(t) \leq \cmt t^{\gamma-1}.
\]
From definition (\ref{fI}) we get
\[
(g*w)(t) \leq \cmt \izt (t-\tau)^{\gamma-1}w(\tau)d\tau = \cmt\ggg (I^{\gamma}w)(t).
\]
Applying  the convolution with $g$ to both sides of this inequality  and using identity $I^{\al}I^{\beta}u = I^{\al+\beta}u$ for $\al,\beta > 0$ (see (2.21) in \cite{Samko}) we get that
\[
(g*g*w)(t) \leq \cmt^{2}\ggg \izt (t-\tau)^{\gamma-1}(I^{\gamma}w)(\tau)d\tau
= \cmt^{2} (\ggg)^{2} (I^{2\gamma}w)(t).
\]
Proceeding this way we obtain the estimate
\eqq{
(g^{[n]}*w)(t) \leq \cmt^{n}(\ggg)^{n}(I^{n\gamma}w)(t).
}{groncn}
Thus for $n$ such that $n \gamma >1$ we have
\[
(g^{[n]}*w)(t) \leq \cmt^{n}\frac{(\ggg)^{n}}{\Gamma(n\gamma)}\izt (t-\tau)^{n\gamma-1}w(\tau)d\tau
\]
\eqq{
\leq \cmt^{n}\frac{(\ggg)^{n}}{\Gamma(n\gamma)} t^{n\gamma-1}\norm{w}_{L^{1}(0,t)}.
}{gronc}
Due to a presence of expression $\Gamma(n\gamma)$ in the denominator, the last phrase tends to zero as $n\rightarrow \infty$ uniformly for all $t \in (0,T] $.

We will show that for $t \in (0,T]$ the  series in (\ref{gront}) is convergent. Applying (\ref{gronc}) we have that
\[
\sum_{k=0}^{\infty}f^{k}(t)(g^{[k]}*a)(t)
\]
\eqq{
 \leq \hspace{-0.2cm} \sum_{ \{ k:  \/ \gamma k\leq 1 \} }f^{k}(t)(g^{[k]}*a)(t)+  \sum_{ \{ k: \/ \gamma k >1 \} }f^{k}(t) \cmt^{k}\frac{(\ggg)^{k}}{\Gamma(k\gamma)} t^{k\gamma-1}\norm{a}_{L^{1}(0,t)}.
}{grone}
Denoting the expression under the sum by $b_{k}$ we see that
\[
\frac{b_{k+1}}{b_{k}} = \frac{f(t)\cmt \ggg t^{\gamma}\Gamma(k\gamma)}{\Gamma(k\gamma+\gamma)},
\]
thus
\[
\frac{b_{k+1}}{b_{k}} = f(t)\cmt \ggg t^{\gamma} B(\gamma, k \gamma) \rightarrow 0 \textrm{ as } k\rightarrow \infty,
\]
so from d'Alembert criterion for $ t \in (0,T]$ series from estimate (\ref{gront}) is convergent.
\end{proof}

\section{Weak solutions}

In this section we prove theorem~\ref{tw1}. The proof will be divided onto a few steps.  First we obtain approximate solutions and  energy estimates. Further, by weak compactness argument  we get a weak  solution  and after that we show its uniqueness. Finally, we derive the continuity  of solution.

\subsection{Approximate solution}
We denote by $\eta_{\ve}= \eta_{\ve}(t)$ the standard smoothing kernel with the support in $[-\ve,\ve]$ and we assume that $\eta_{\ve}(t)=\eta_{\ve}(-t)$.  By $\overline{\ast}$ we denote the convolution on real line. Then we set
\[
a_{i,j}^{n}(x,t) = \eta_{\frac{1}{n}}(\cdot)\overline{\ast}a_{i,j}(x,\cdot)(t),
\]
where we extend $a_{i,j}(t)$ by even reflection for $t \notin (0,T).$ Then $a_{i,j}^{n}\rightarrow a_{i,j}$ in $L^{2}(\Om^{T})$ and
\[
\lambda|\xi|^{2} \leq \sum_{i,j}^{N}a_{i,j}^{n}(x,t)\xi_{i}\xi_{j} \leq \Lambda|\xi|^{2} \textrm{  } \hd \forall t \in [0,T], \hd \forall \xi \in \mr^{N}, \m{ a.a. } x\in \Omega.
\]
Further, we extend  $b_{j}, c$ by zero beyond the interval $[0,T]$ and we extend $f$ by odd reflection to the interval $(-T,T)$ and we set zero elsewhere. We denote
\[
b^{n}_{j}(x,t) =\eta_{\frac{1}{n}}(\cdot)\overline{\ast}b_{j}(x,\cdot)(t), \ \ \ \ c^{n}(x,t)  =  \eta_{\frac{1}{n}}(\cdot)\overline{\ast}c(x,\cdot)(t),
\]
\[
 f_{\frac{1}{n}} = \eta_{\frac{1}{n}}(\cdot)\overline{\ast} f(x,\cdot)(t).
\]
We look for approximate solution to system (\ref{uklad}) in the form
\eqq{
u^{n}(x,t) = \sum_{k=1}^{n}c_{n,k}(t)\vf_{k}(x) \textrm{ for } n \in \mathbb{N}.}{un}
where $\{\vf_{n}\}_{n=1}^{\infty}$ forms orthonormal  basis of $L^{2}(\Om).$ Precisely $\vf_{n}$ fulfill
\begin{equation}\label{vf}
 \left\{ \begin{array}{rlll}
-\Delta \vf_{n} &=& \lambda_{n}\vf_{n} & \textrm{ in } \hd  \Om \\
 {\vf_{n}}_{| \p \Om } &=& 0. &  \\
\end{array} \right. \end{equation}
In order to find $c_{n,k},$ we shall consider the approximate problem of the form
\begin{equation}\label{nn}
 \left\{ \begin{array}{ll}
\dm u^{n}(x,t) = L^{n}u^{n}(x,t) +f^{n}(x,t)& \textrm{ in } \Om^{T} \\
 u^{n}(x,0) = \uz^{n}, & \\
\end{array} \right. \end{equation}
where
\[
f^{n}(x,t) = \sum_{k=1}^{n} \left\langle f_{\frac{1}{n}(y,t)},\vf_{k}(y)\right\rangle_{H^{-1} \times H_{0}^{1}}\vf_{k}(x),
\]
\[
\uz^{n} = \sum_{k=1}^{n}\int_{\Om}\uz(y)\vf_{k}(y)dy\vf_{k}(x),
\]
\[
 L^{n}u^{n}(x,t) = \sum_{i,j=1}^{N}D_{i}(a_{i,j}^{n}(x,t)D_{j}u^{n}(x,t))
\]
\[
 + \sum_{j=1}^{N} b^{n}_{j}(x,t)D_{j}u^{n}(x,t) + c^{n}(x,t)u^{n}(x,t).
\]
Making use of (\ref{un}) in (\ref{nn}), multiplying by $\vf_{m}$ for $m=1, \dots, n$ and integrating over $\Om,$ we get
\[
\dm \int_{\Om} \sum_{k=1}^{n}c_{n,k}(t)\vf_{k}(x)\vf_{m}(x)dx
\]
\[
 = -\sum_{i,j=1}^{N}\int_{\Om} a^{n}_{i,j}(x,t)\sum_{k=1}^{n}c_{n,k}(t)D_{j}\vf_{k}(x)D_{i}\vf_{m}(x)dx
\]
\[
+\sum_{j=1}^{N} \iO b^{n}_{j}(x,t)\sum_{k=1}^{n}c_{n,k}(t)D_{j}\vf_{k}(x)\vf_{m}(x)dx
\]
\eqq{
+ \iO c^{n}(x,t)\sum_{k=1}^{n}c_{n,k}(t)\vf_{k}(x)\vf_{m}(x)dx + \left\langle f_{\frac{1}{n}}(\cdot,t),\vf_{m}\right\rangle_{H^{-1} \times H_{0}^{1}}.
}{przyb}
Using the orthogonality of  $\{\vf_{n}\}_{n=1}^{\infty}$ we obtain
\begin{equation}
\left\{ \begin{array}{l} \label{cn}
\dm c_{n}(t) = -A^{n}(t)c_{n}(t) + F^{n}(t) \\
 c_{n}(0) = c_{n,0},\\
\end{array} \right. \end{equation}
where
\[
c_{n}(t) = \left(c_{n,1}(t),\cdots, c_{n,n}(t)\right),
\]
\[
A^{n}(t)=\Bigg(\iO\sum_{i,j=1}^{N}a_{i,j}^{n}(x,t)D_{j}\vf_{k}(x)D_{i}\vf_{m}(x)
\]
\[
   - \left[\sum_{j=1}^{N} b_{j}^{n}(x,t)D_{j}\vf_{k}(x) - c^{n}(x,t)\vf_{k}(x) \right]\vf_{m}(x)dx\Bigg)_{k,m=1}^{n}
\]
\[
F^{n}(t) = \left(\int_{\Om} f_{\frac{1}{n}}(y,t)\vf_{1}(y)dy, \cdots, f_{\frac{1}{n}}(y,t)\vf_{n}(y)dy\right),
\]
\[
c_{n,0} = \left(\int_{\Om} \uz(y)\vf_{1}(y)dy, \cdots, \int_{\Om}\uz(y)\vf_{n}(y)dy\right).
\]
From theorem~\ref{fint} we deduce, that for $c_{n}\in AC[0,T]$ the system (\ref{cn}) is equivalent to the following problem
\begin{equation}
 \left\{ \begin{array}{l} \label{cnc}
c_{n}(t) = c_{n}(0) - (g*A^{n}c_{n})(t) + (g*F^{n})(t) \\
 c_{n}(0) = c_{n,0}, \\
\end{array} \right. \end{equation}
which we obtain from (\ref{cn}) by applying $\im$ to both sides. Our aim is to show that the system (\ref{cnc}) has an absolutely continuous solution. For this purpose we use Banach fixed point theorem. We shall consider the following class of functions
\eqq{X(T) = \{ c \in C([0,T];\mr^{n}): c(0) = c_{n,0}, \ \  t^{1-\gamma} c'(t) \in C([0,T];\mr^{n})\},}{X}
where $\gamma$ is taken from (\ref{defg}). We define metrics on  $X(T)$ as
 \[
\norm{c}_{X(T)} = \norm{c}_{C[0,T]} + \norm{t^{1-\gamma} c'}_{C[0,T]}.
\]
Then, using standard argument we can show that  $X(T)$ is complete metric space.
\begin{lem}
For every $n \in \mathbb{N}$ and $T \in (0,\infty)$ there exists a unique solution to (\ref{cnc}) in $X(T)$.
\label{lemthree}
\end{lem}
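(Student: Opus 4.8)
The plan is to apply the Banach fixed point theorem to the operator $\Phi$ on $X(T)$ that sends $c$ to the right-hand side of (\ref{cnc}), namely
\[
(\Phi c)(t) = c_{n,0} - (g*A^{n}c)(t) + (g*F^{n})(t).
\]
First I would check that $\Phi$ maps $X(T)$ into itself. Since $A^{n}$ is (essentially) bounded on $[0,T]$ — its entries involve the smoothed coefficients $a_{i,j}^{n}, b_j^{n}, c^{n}$, which are continuous (or at least $L^\infty$) in $t$, together with fixed $\varphi_k$ — and $F^{n}$ is likewise in $L^{\infty}(0,T;\mathbb{R}^{n})$ after the mollification, we get $g*A^{n}c$ and $g*F^{n}$ continuous on $[0,T]$ and vanishing at $t=0$ using $g \in L^{1}_{loc}$ (corollary~\ref{oszacg12}), exactly as in the proof of theorem~\ref{fint}. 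For the derivative condition I would differentiate the convolution: since $g(t) \le \cmt t^{\gamma-1}$, one has $(g*h)'(t) = \frac{d}{dt}\izt g(t-\tau)h(\tau)d\tau$, and multiplying by $t^{1-\gamma}$ and using the explicit bound on $g$ together with boundedness of $A^{n}c$ and $F^{n}$ shows $t^{1-\gamma}(\Phi c)'(t)$ extends continuously to $[0,T]$; this is the place where the weight $t^{1-\gamma}$ in the definition of $X(T)$ is precisely calibrated to absorb the singularity of $g$ at the origin.

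Next I would establish the contraction estimate. For $c,\tilde c \in X(T)$,
\[
(\Phi c - \Phi\tilde c)(t) = -\bigl(g*A^{n}(c-\tilde c)\bigr)(t),
\]
and using $|A^{n}(t)| \le M$ on $[0,T]$ together with $g(t)\le \cmt t^{\gamma-1}$ gives
\[
|(\Phi c-\Phi\tilde c)(t)| \le M\cmt\,\izt (t-\tau)^{\gamma-1}\,|c(\tau)-\tilde c(\tau)|\,d\tau \le M\cmt\Gamma(\gamma)\,(I^{\gamma}|c-\tilde c|)(t),
\]
and similarly for the weighted-derivative part one controls $t^{1-\gamma}$ times the derivative of the convolution. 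Both pieces carry a factor that is a power of $T$ (or of $T^{\gamma}$), so by choosing $T=T_{1}$ small enough — depending on $n$, $M$, $\gamma$, $\cmt$ — the map $\Phi$ becomes a contraction on $X(T_{1})$; Banach's theorem then yields a unique fixed point on $[0,T_{1}]$, i.e. a unique solution of (\ref{cnc}) in $X(T_{1})$. To extend from the local interval $[0,T_{1}]$ to the full $[0,T]$, I would either iterate the argument on successive subintervals (the length $T_1$ can be taken uniform since $M$ is a global bound on $[0,T]$, and restarting from $t=T_1$ only changes the "initial" data, which stays finite), or, more cleanly in the presence of the convolution memory term, use the Gronwall-type lemma~\ref{gronwall}: any two solutions $c,\tilde c$ of (\ref{cnc}) on $[0,T]$ satisfy $w(t):=|c(t)-\tilde c(t)| \le M(g*w)(t)$, hence $w\equiv 0$ by (\ref{gront}) with $a\equiv 0$, giving global uniqueness, while existence on all of $[0,T]$ follows by continuing the local solution (the series in (\ref{gront}) converges on all of $(0,T]$, so no blow-up can occur).

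Finally I would record that the fixed point $c_n$ of (\ref{cnc}) is genuinely absolutely continuous on $[0,T]$: from $c_n \in X(T)$ we have $c_n' \in L^{1}(0,T)$ because $t^{1-\gamma}c_n'(t)$ is bounded and $\gamma<\tfrac12<1$, so $t^{\gamma-1}\in L^{1}(0,T)$, whence $c_n \in AC[0,T]$; this is exactly the regularity needed to apply theorem~\ref{fint} and conclude that $c_n$ solves the original differentiated system (\ref{cn}) as well. The main obstacle I anticipate is the bookkeeping around the weighted space $X(T)$ — specifically, verifying that $t^{1-\gamma}\frac{d}{dt}(g*h)(t)$ is continuous up to $t=0$ when $h$ is merely bounded (one cannot naively move the derivative inside, and must instead use $\frac{d}{dt}(g*h)(t) = g(t)h(0) + (g*h')(t)$ when $h$ is absolutely continuous, or an approximation/integration-by-parts argument in the general bounded case), and checking that all the constants in the contraction estimate depend on $T$ through a genuinely positive power so that smallness of $T_1$ can be arranged; once that is in hand, the rest is the standard Banach fixed point / Gronwall machinery.
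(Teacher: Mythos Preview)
Your overall strategy---Banach fixed point on $X(T_{1})$ for small $T_{1}$, then extension---is exactly the paper's, but two points deserve tightening.

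First, you undersell the regularity of $A^{n}$ and $F^{n}$: mollification in $t$ makes them $C^{\infty}$, not merely $L^{\infty}$, and the paper uses this $C^{1}$ regularity essentially. With $c\in X(T_{1})$ one has $h:=-A^{n}c+F^{n}$ absolutely continuous and $t^{1-\gamma}h'\in C[0,T_{1}]$, so the identity $(g*h)'(t)=g(t)h(0)+(g*h')(t)$ is legitimate; the continuity of $t^{1-\gamma}g(t)h(0)$ at $t=0$ then comes from proposition~\ref{contg} (not just the bound in corollary~\ref{oszacg12}), and the continuity of $t^{1-\gamma}(g*h')(t)$ is handled by splitting into three pieces and using both the $g$-bound and $|h'(t)|\le \|h\|_{X}t^{\gamma-1}$. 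Your ``bounded $h$'' fallback would not suffice here; the paper's contraction estimate for the weighted-derivative part also needs $\|(A^{n})'\|_{C[0,T]}$.

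Second, your extension step is where a genuine gap could open. The equation is a Volterra convolution with memory back to $0$, so ``restarting from $t=T_{1}$ with new initial data'' is not available: the operator $P$ on $[T_{1},T_{2}]$ still integrates $g(t-\tau)A^{n}(\tau)c(\tau)$ over $\tau\in[0,t]$. The paper handles this by working in the affine space $X_{k}(T_{k+1})$ of $C^{1}((0,T_{k+1}])$ functions that coincide with the already-constructed solution on $[0,T_{k}]$; for $c_{1},c_{2}$ in this space the difference vanishes on $[0,T_{k}]$, so $(Pc_{1}-Pc_{2})(t)=-\int_{T_{k}}^{t}g(t-\tau)A^{n}(\tau)(c_{1}-c_{2})(\tau)d\tau$ and the contraction constant depends only on $(T_{k+1}-T_{k})^{\gamma}$ and $\|A^{n}\|_{C^{1}[0,T]}$, hence is uniform in $k$. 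Your Gronwall alternative gives global uniqueness cleanly, but for global existence you still need an argument of this type rather than an a priori bound alone.
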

\begin{proof}
It is enough to show that operator
$(Pc)(t) = c_{n,0} - (g*A^{n}c)(t) + (g*F^{n})(t)$ conducts elements from $X(T_{1})$ into $X(T_{1})$ for every $T_{1} \in (0,T]$ and is a contraction on $X(T_{1})$ for some $T_{1}$ small enough. Later we extend the solution on the whole interval $[0,T].$
Let $c \in X(T_{1})$ for some $T_{1}\in (0,T].$ Then $(Pc)(0) = c_{n,0}.$ To get $Pc \in X(T_{1})$ it is sufficient to show that  $t^{1-\gamma} (Pc)'$ is continuous on $[0,T_{1}]$. For this purpose we write
\[
t^{1-\gamma} (Pc)'(t)= \tjg g(t) (- A^{n}(0)c(0)+F^{n}(0) ) + \tjg g*h'(t),
\]
where $h(t)= -A^{n}(t)c(t)+F^{n}(t)$. By proposition~\ref{contg} we get the continuity of the first component. To deal with the second one we write
\[
\left|  t^{1- \gamma}_{2} \int_{0}^{t_{2}} g(\tau) h'(t_{2}- \tau) \dt - t^{1- \gamma}_{1} \int_{0}^{t_{1}} g(\tau) h'(t_{1}- \tau) \dt    \right|
\]
\[
\leq |t_{2}^{1-\gamma}- t_{1}^{1-\gamma}| \int_{0}^{t_{2}} g(\tau) |h'(t_{2}- \tau)| \dt + t_{1}^{1-\gamma}  \int_{t_{1}}^{t_{2}} g(\tau) |h'(t_{2}- \tau)| \dt
\]
\[
+ t_{1}^{1-\gamma}\hspace{-0.2cm}  \int_{0}^{t_{1}}\hspace{-0.2cm} g(\tau) |h'(t_{2}- \tau)-h'(t_{1}- \tau)| \dt \hspace{-0.1cm} \equiv \hspace{-0.1cm} M_{1}(t_{1}, t_{2})+M_{2}(t_{1}, t_{2})+M_{3}(t_{1}, t_{2}),
\]
where $0\leq t_{1} <t_{2} \leq T_{1}$. From corollary~\ref{oszacg12} and the definition of the space $X(T_{1})$ we have
\eqq{g(t) \leq \cmt \tgj, \hd \hd  |h'(t) | \leq \| h \|_{X(T_{1})} \tgj. }{xa}
We shall show that
\eqq{\lim_{t_{2} \rightarrow t_{1}^{+}}M_{i}(t_{1}, t_{2})=0, \hd \hd i=1,2,3. }{xb}
Applying (\ref{xa}) we have
\[
\int_{0}^{t_{2}} g(\tau) |h'(t_{2}- \tau)| \dt \leq \cmt  \| h \|_{X(T_{1})}  t_{2}^{2\gamma-1} B(\gamma, \gamma),
\]
hence $M_{1}(t_{1}, t_{2}) \leq \cmt  \| h \|_{X(T_{1})}  B(\gamma, \gamma) |1- (\frac{t_{1}}{t_{2}})^{1- \gamma}| t_{2}^{\gamma}$, thus we have (\ref{xb}) for $i=1$ and for any nonnegative~$t_{1}$.

\no Because $M_{2}(0, t_{2})=M_{3}(0, t_{2})=0 $ it is sufficient to consider (\ref{xb}) for positive $t_{1}$. In this case, after applying (\ref{xa}) we have
\[
\int_{t_{1}}^{t_{2}} g(\tau) |h'(t_{2}- \tau)| \dt \leq \cmt  \| h \|_{X(T_{1})}  t_{2}^{2\gamma-1} \int_{\frac{t_{1}}{t_{2}}}^{1} s^{\gamma-1}(1-s)^{\gamma-1}ds \longrightarrow 0,
\]
if $t_{2} \rightarrow t_{1}^{+}$ and we have (\ref{xb}) for $i=2$. Finally, if $t_{0}\in (0, t_{1})$ then we write
\[
\int_{0}^{t_{1}} g(\tau) |h'(t_{2}- \tau)-h'(t_{1}- \tau)| \dt
\]
\[
\leq 2 \sup_{s\in [t_{1}-t_{0}, t_{2}]} |h'(s)| \int_{0}^{t_{0} } g(\tau) \dt + g(t_{0}) \int_{t_{0}}^{t_{1}} | h'(t_{2}-\tau)- h'(t_{1}- \tau)|\dt
\]
\[
\leq 2   \| h \|_{X(T_{1})} (t_{1}-t_{0})^{\gamma-1}\hspace{-0.2cm} \int_{0}^{t_{0} } g(\tau) \dt + \cmt t_{0}^{\gamma-1}\hspace{-0.2cm} \int_{0}^{t_{1}-t_{0}}\hspace{-0.4cm} | h'(t_{2}-t_{1}+s)- h'(s)|ds.
\]
The first term is arbitrary small, if $t_{0}$ is sufficient small. Then for fixed $t_{0}$ the second term can be made arbitrary small, provided  $|t_{2}-t_{1}|$ is sufficient small, because the translation operator is continuous in $L^{1}$. It proves (\ref{xb}) for $i=3$ and we have $Pc \in X(T_{1})$, provided $c\in X(T_{1})$.

It remains to show that operator $P$ is a contraction on $X(T_{1})$ for $T_{1}$ small enough.
 Using the linearity of $P$ it is sufficient to show that for $c = c_{1} - c_{2},$ where $c_{1},c_{2}  \in X(T_{1})$ we have $\norm{Pc}_{X(T_{1})} \leq \alpha \norm{c}_{X(T_{1})}$, where $\al < 1.$ For this purpose we denote  $\norm{\cdot} = \norm{\cdot}_{C[0,T_{1}]}.$ Then from  (\ref{xa}) we get
\[
\norm{Pc} = \norm{g*A^{n}c} \leq \cmt \norm{A^{n}} \norm{c} \int_{0}^{T_{1}} \tau^{\gamma-1}d\tau \leq \frac{\cmt}{\gamma} \norm{A^{n}} \norm{c} T_{1}^{\gamma}.
\]
Using $c(0)=0$ we obtain
\[
\norm{t^{1-\gamma}(Pc)'}
\]
\[
 =
 \norm{t^{1-\gamma} \izt g(t-\tau){A^{n}}'(\tau)c(\tau) d\tau+t^{1-\gamma} \izt g(t-\tau)A^{n}(\tau)c'(\tau) d\tau}.
\]
From definition $X(T_{1})$ and  (\ref{xa}) we have
\[
\abs{t^{1-\gamma} \izt g(t-\tau){A^{n}}'(\tau)c(\tau) d\tau} \leq \cmt  t^{1-\gamma}\norm{{A^{n}}'} \norm{c} \izt (t-\tau)^{\gamma-1}d\tau
\]
\[
 \leq \frac{\cmt}{\gamma} \norm{{A^{n}}'} \norm{c}T_{1} .
\]
Next, we have
\[
\abs{t^{1-\gamma} \izt g(t-\tau)A^{n}(\tau)c'(\tau) d\tau}
\]
\[
 \leq \cmt t^{1-\gamma}\norm{A^{n}}\norm{c}_{X(T_{1})}\izt (t-\tau)^{\gamma-1}\tau^{\gamma-1}d\tau
 \]
 \[
 =\cmt t^{\gamma}\norm{A^{n}}\norm{c}_{X(T_{1})}B(\gamma,\gamma).
\]
Thus for $T_{1}$ such, that
\[
4 T^{1-\gamma} \frac{\cmt}{\gamma} \norm{A^{n}}_{C^{1}[0,T_{1}]} T_{1}^{\gamma} < 1
\]
$P$ is a contraction on $X(T_{1})$ and we obtained the solution of (\ref{cnc}) in $X(T_{1}).$

In order to extend the solution on the whole interval $[0,T]$, let us assume that we already defined solution $c_{s}$ to  (\ref{cnc}) on the interval $[0,T_{k}]$ for some $T_{k} > 0.$ We would like to find the solution on $[T_{k}, T_{k+1}],$ where $T_{k+1} > T_{k}.$ To that end, we define the space
\[
X_{k}(T_{k+1}) = \{ c \in C^{1}((0,T_{k+1}];\mr^{n}): c(t) = c_{s}(t) \textrm{ for } t \in [0,T_{k}]\}
\]
equipped with metrics $\norm{c}_{X_{k}(T_{k+1})} = \norm{c}_{C^{1}[T_{k},T_{k+1}]}.$ Then $X_{k}(T_{k+1})$ is complete metric space.  Let $c\in X_{k}(T_{k+1}),$ then $c\in X(T_{k+1})$ and from the previous part of the proof we get that $Pc \in X(T_{k+1}).$ Making use of the definition $X(T_{k+1})$ we obtain, that $Pc \in C[0,T_{k+1}]$ and $t^{1-\gamma}(Pc)' \in C[0,T_{k+1}],$ which implies that $(Pc)' \in C(0,T_{k+1}].$ Thus we have just shown that $Pc \in X_{k}(T_{k+1}).$

Now we will prove that operator $P$ is a contraction on $X_{k}(T_{k+1})$ for $T_{k+1} - T_{k}$ small enough. We shall rewrite $P$ in the form
\[
(Pc)(t) = c_{n,0} + \izt g(t-\tau)F_{n}(\tau)d\tau - \int_{0}^{T_{k}}g(t-\tau)A^{n}(\tau)c(\tau)d\tau
\]
\[
 - \int_{T_{k}}^{t}g(t-\tau)A^{n}(\tau)c(\tau)d\tau.
\]
Let $c = c_{1} - c_{2},$ where $c_{1}, c_{2} \in X_{k}(T_{k+1}).$ Of course $c \equiv 0$ on $[0,T_{k}]$ and
\[
(Pc)(t) = -\int_{T_{k}}^{t}g(t-\tau)A^{n}(\tau)c(\tau)d\tau
\]
Denoting by $\norm{\cdot}_{C[T_{k},T_{k+1}]} = \norm{\cdot},$ from (\ref{xa})  we have
\[
\norm{Pc} \leq \frac{\cmt}{\gamma} \norm{A^{n}}\norm{c}(T_{k+1}-T_{k})^{\gamma}.
\]
Similarly, we get
\[
\norm{(Pc)'}
\leq \frac{\cmt}{\gamma}\left[\norm{A^{n'}}\norm{c}+
\norm{A^{n}}\norm{c'}\right](T_{k+1}-T_{k})^{\gamma},
\]
thus
\[
\norm{Pc}_{X_{k}(T_{k+1})} \leq 2\frac{\cmt}{\gamma} \norm{A^{n}}_{C^{1}[0,T]} (T_{k+1}-T_{k})^{\gamma}\norm{c}_{X_{k}(T_{k+1})}.
\]
To sum up, the operator $P$ is contraction on $X_{k}(T_{k+1})$, if  $T_{k+1}$ is such, that
\[
2\frac{\cmt}{\gamma} \norm{A^{n}}_{C^{1}[0,T]} (T_{k+1}-T_{k})^{\gamma} < 1.
\]
It is clear that the condition on  the difference $|T_{k+1}-T_{k}|$ does not depend on  $k$, hence after finite number of steps we obtain that $P$ has a unique fixed point on $[0,T]$ and the lemma is proven.
\end{proof}

\begin{rem}
The approximate solution $c_{n}$ of (\ref{cnc}) is absolutely continuous, thus $c_{n}$ is also a solution to  (\ref{cn}). What is more, $u^{n}$ given by (\ref{un}) satisfy the system of equations (\ref{nn}), $ u^{n}(x,\cdot) \in AC[0,T]$ and $t^{1-\gamma}u^{n}_{t}(x,\cdot) \in C[0,T].$ Besides, for $\beta \in \mathbb{N}^{\mathbb{N}},$ if $\p \Om $ is regular enough, then $D_{x}^{\beta}u^{n}(x,\cdot) \in AC[0,T]$ and $t^{1-\gamma}D_{x}^{\beta}u^{n}_{t}(x,\cdot) \in C[0,T].$
\label{remone}
\end{rem}

\subsection{Energy estimates}

In this section we shall obtain a uniform bound for the sequence of approximate solution. First we formulate the following lemma, where we denote $\map = \mg $ .

\begin{lem}\label{Alemat}
Assume that $w \in L^{2}(\Om^{T})$ and for $x\in \Om$ $w(x,\cdot) \in AC[0,T]$ and for fixed $\gamma \in (0,1)$ \hd $t^{1-\gamma}w_{t}(x,t) \in L^{\infty}(\Om^{T}).$ Then the following equality holds
\[
\dm ||w(\cdot,t)||^{2}_{L^{2}(\Om)} +\izj\hspace{-0.1cm}  \izt (t-\tau)^{-\al - 1}\hspace{-0.2cm}\iO |w(x,t) - w(x,\tau)|^{2}dx   d\tau \al \map  d\al
\]
 \eqq{+ \izj   t^{-\al}\iO |w(x,t) - w(x,0)|^{2}dx \map d\al = 2 \iO \dm w(x,t) w(x,t)dx. }{oszacA}
\end{lem}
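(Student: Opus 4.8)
The identity to prove is a "differentiated" energy equality for the distributed-order Caputo derivative acting on $\|w(\cdot,t)\|_{L^2(\Om)}^2$. The natural strategy is to prove the pointwise (in $\al$) analogue first, namely the classical identity for the single fractional Caputo derivative
\[
\da \|w(\cdot,t)\|_{L^2(\Om)}^2 + \frac{\al}{\gja}\izt (t-\tau)^{-\al-1}\iO |w(x,t)-w(x,\tau)|^2 dx\, d\tau + \frac{t^{-\al}}{\gja}\iO|w(x,t)-w(x,0)|^2 dx = 2\iO \da w(x,t)\, w(x,t)\, dx,
\]
and then integrate this equality against $\mu(\al)\, d\al$ over $(0,1)$, using Tonelli/Fubini to interchange the $\al$-integration with the time integrals and the $\Om$-integration. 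The assumptions $w(x,\cdot)\in AC[0,T]$ and $t^{1-\gamma}w_t(x,t)\in L^\infty(\Om^T)$ are exactly what is needed to make every term finite and the interchange legitimate: they force $|w(x,t)-w(x,\tau)|\le C|t^\gamma-\tau^\gamma|$ so the singular kernel $(t-\tau)^{-\al-1}$ is integrable near $\tau=t$ for each $\al<1$, and the bound $\int_0^1 \al\,\map\,d\al<\infty$ (from $\mu\in L^1$, since $\map=\mu(\al)/\gja$ and $1/\gja$ is bounded on $[0,1]$) together with $\int_0^1 t^{-\al}\map\,d\al = k(t)\in L^1$ (shown in \eqref{xc}) controls the $\al$-integral.

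**Key steps in order.** First I would establish the scalar identity for fixed $\al\in(0,1)$. Write $v(t)=\|w(\cdot,t)\|_{L^2(\Om)}^2$ and note $v\in AC[0,T]$ with $v'(t)=2\iO w_t(x,t)w(x,t)dx$. The cleanest route is the well-known algebraic manipulation: for $\al\in(0,1)$,
\[
\da v(t) = \frac{1}{\gja}\izt (t-\tau)^{-\al} v'(\tau)\,d\tau,
\]
and then integrate by parts in $\tau$ (valid since $v\in AC$ and the kernel is integrable), producing the boundary term $-\frac{t^{-\al}}{\gja}(v(0)-v(t))$ — wait, more carefully, rewriting $v'(\tau)=2\iO w_\tau(x,\tau)w(x,\tau)dx$ and using the pointwise identity $2 w_\tau(w(\tau)-w(t)) = \partial_\tau|w(\tau)-w(t)|^2$ inside, one gets after an integration by parts in $\tau$ exactly the two extra nonnegative quadratic terms. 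This is the standard "Alikhanov-type" inequality turned into an equality; I would carry it out by splitting $2\iO w_\tau w(\tau)dx = 2\iO w_\tau(w(\tau)-w(t))dx + 2 w(t)\cdot\iO w_\tau dx$ and handling each piece. Second, having the scalar identity for a.e.\ $\al$, multiply by $\mu(\al)$ and integrate over $(0,1)$; by Tonelli all integrands are nonnegative (for the two middle terms) or absolutely integrable (the right side, using $t^{1-\gamma}w_t\in L^\infty$ and $w\in L^2(\Om^T)\cap$ bounded), so Fubini applies and the $\al$-integral commutes with $\izt$, $\iO$, and $\frac{d}{d\tau}$ inside the definition of $\dm$. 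Third, identify the resulting right-hand side as $2\iO\dm w(x,t)\,w(x,t)\,dx$, which is just the definition \eqref{disCap} with the $\al$-integral pulled inside the $\Om$-integral — again justified by Fubini since $\dm w\in L^1$ (by \eqref{Cac}) and $w(\cdot,t)\in L^2$.

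**Main obstacle.** The delicate point is the interchange of $\frac{d}{dt}$ (implicit in $\dm = \partial^{(\mu)}$, which differentiates a convolution) with the $\al$-integral and with the $\Om$-integral, near $t=0$ and near $\tau=t$. I expect the cleanest fix is to avoid differentiating under integral signs altogether: use the Caputo form $\da w(x,t)=\frac{1}{\gja}\izt(t-\tau)^{-\al}w_\tau(x,\tau)d\tau$ directly (legitimate since $w(x,\cdot)\in AC$), so that no $\frac{d}{dt}$ appears and everything reduces to Tonelli on nonnegative integrands plus one genuinely absolutely convergent integral. Establishing the scalar equality rigorously — in particular justifying the integration by parts in $\tau$ that moves the $\tau$-derivative off $|w(\tau)-w(t)|^2$ and produces the boundary terms at $\tau=0$ and $\tau=t$, where the kernel $(t-\tau)^{-\al}$ is singular — is where the assumption $t^{1-\gamma}w_t\in L^\infty$ does the real work: it guarantees $(t-\tau)^{-\al}|w(\tau)-w(t)|^2$ and its relevant primitives vanish at $\tau=t$, so no spurious boundary contribution at the singular endpoint. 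Once the scalar identity is in hand, the passage to the distributed order is routine measure theory.
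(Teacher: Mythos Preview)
Your proposal is correct and matches the approach the paper indicates: the paper omits the proof entirely, saying only that it is ``almost the same as the proof of Lemma~2 in \cite{KY}'' (which is the single-order version of this identity), and your plan---establish the pointwise-in-$\al$ equality via the Alikhanov-type integration by parts and then integrate against $\mu(\al)\,d\al$ using Tonelli/Fubini---is exactly that route. The regularity assumption $t^{1-\gamma}w_t\in L^\infty(\Om^T)$ is indeed what makes the boundary term at $\tau=t$ vanish and renders all the iterated integrals absolutely convergent, as you identified.
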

\begin{proof}
The proof of that lemma is almost the same as the proof of Lemma 2 in \cite{KY}, thus is omitted.
\end{proof}

\no Now we are able to formulate and prove energy estimates for approximate solutions.
\begin{lem}\label{oszac1}
Assume that $\uz \in L^{2}(\Om),$ $f\in L^{2}(0,T;H^{-1}(\Om)),$ $a_{i,j}$ are measurable and $a_{i,j}= a_{j,i},$ for some  $p_{1},p_{2} \in [2,\frac{2N}{N-2})$ $b_{j} \in L^{\infty}(0,T;L^{\frac{2p_{1}}{p_{1}-2}}(\Om)) $ for $j=1\cdots N$ and $c \in L^{\infty}(0,T; L^{\frac{p_{2}}{p_{2}-2}}(\Om)).$ What is more, assume that (\ref{elipt}) holds. Then, for every $n \in \mathbb{N}$ and for every $t \in (0,T]$ the approximate solution  $u^{n}$ given by (\ref{un}) and lemma~\ref{lemthree} satisfies the inequality
\[
\izj  I^{1-\al}\norm{u^{n}(\cdot,t)}_{L^{2}(\Om)}^{2} \ma d\al
\]
\[
 + \izt \hspace{-0.1cm}\izj \hspace{-0.1cm} \iO\abs{u^{n}(x,\tau) - u^{n}_{0}(x)}^{2}dx  \tau^{-\al} \map  d\al d\tau+ \lambda \hspace{-0.1cm} \izt\hspace{-0.2cm}\norm{D u^{n}(\cdot,\tau)}^{2}_{L^{2}(\Om)}\hspace{-0.1cm}d\tau
 \]
 \[
+  \izt\izj  \int_{0}^{\tau} (\tau-s)^{-\al-1}\iO \abs{u^{n}(x,\tau)-u^{n}(x,s)}^{2}dxds \al \map d\al d\tau
\]
 \[\leq \tilde{c}_{1} \left( \norm{u_{0}}^{2}_{L^{2}(\Om)} +\izt \norm{f(\cdot,\tau)}_{H^{-1}(\Om)}^{2} d\tau \right)+ \delta_{n},
\]
where $\tilde{c}_{1}$ depends only on  $\Om,T,\mu,p_{1},p_{2},\lambda, \norm{c^{n}}_{L^{\infty}(0,T;L^{\frac{p_{2}}{p_{2}-2}}(\Om))},$\\
$\norm{b^{n}}^{2}_{L^{\infty}(0,T;L^{\frac{2 p_{1}}{p_{1}-2}}(\Om))}$
and $\delta_{n}\overset{n\rightarrow \infty}{\rightarrow} 0$ uniformly with respect to $t$.
\end{lem}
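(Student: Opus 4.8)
\noindent\textit{Proof plan.} The plan is to test the finite-dimensional system (\ref{nn}) against $u^{n}$ itself by means of the quadratic-form identity (\ref{oszacA}), to absorb the lower-order and source terms via Sobolev embedding and Young's inequality, and finally to close the resulting inequality using the distributed-order Gronwall Lemma~\ref{gronwall}.

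First I would apply Lemma~\ref{Alemat} with $w=u^{n}$; this is admissible because, by Remark~\ref{remone}, $u^{n}(x,\cdot)\in AC[0,T]$ and $t^{1-\gamma}u^{n}_{t}(x,\cdot)\in C[0,T]$, and since the eigenfunctions $\vf_{k}$ are bounded on $\overline{\Om}$ we in fact have $t^{1-\gamma}u^{n}_{t}\in L^{\infty}(\Om^{T})$. On the right-hand side of (\ref{oszacA}) I insert $\dm u^{n}=L^{n}u^{n}+f^{n}$ from (\ref{nn}) and integrate by parts in $x$ (legitimate since $u^{n}(\cdot,t)\in H^{1}_{0}(\Om)$):
\[
2\iO (L^{n}u^{n})u^{n}\,dx = -2\sij\iO a^{n}_{i,j}D_{j}u^{n}D_{i}u^{n}\,dx + 2\sj\iO b^{n}_{j}D_{j}u^{n}u^{n}\,dx + 2\iO c^{n}(u^{n})^{2}\,dx .
\]
The uniform ellipticity of $a^{n}_{i,j}$ bounds the first term by $-2\lambda\|Du^{n}(\cdot,t)\|^{2}_{L^{2}(\Om)}$ (only the lower ellipticity constant $\lambda$ enters, which is why $\tilde c_{1}$ will not depend on $\Lambda$). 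For the first- and zeroth-order terms I use Hölder's inequality with the exponents prescribed by the integrability of $b$, $c$, the Gagliardo--Nirenberg inequality $\|v\|_{L^{p_{i}}(\Om)}\le C\|Dv\|_{L^{2}(\Om)}^{\theta_{i}}\|v\|_{L^{2}(\Om)}^{1-\theta_{i}}$ with $\theta_{i}=N(\frac{1}{2}-\frac{1}{p_{i}})\in(0,1)$ (valid since $p_{i}<\frac{2N}{N-2}$), and Young's inequality; as $\frac{1+\theta_{1}}{2}<1$ and $\theta_{2}<1$, both terms are dominated by $\ep\|Du^{n}(\cdot,t)\|^{2}_{L^{2}(\Om)}+C_{\ep}\|u^{n}(\cdot,t)\|^{2}_{L^{2}(\Om)}$, where $C_{\ep}$ depends on $\ep$, $\Om$, $p_{1}$, $p_{2}$, $\|b^{n}\|_{L^{\infty}(0,T;\cdot)}$ and $\|c^{n}\|_{L^{\infty}(0,T;\cdot)}$. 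Since $u^{n}(\cdot,t)$ lies in the span of $\vf_{1},\dots,\vf_{n}$, the source term satisfies $2\iO f^{n}u^{n}\,dx=2\langle f_{1/n}(\cdot,t),u^{n}(\cdot,t)\rangle_{H^{-1}\times H^{1}_{0}}\le\ep\|Du^{n}(\cdot,t)\|^{2}_{L^{2}(\Om)}+C_{\ep}\|f_{1/n}(\cdot,t)\|^{2}_{H^{-1}(\Om)}$ by Cauchy--Schwarz and the Poincaré inequality. Choosing $\ep$ so that the three $\ep\|Du^{n}\|^{2}$-contributions add up to $\lambda$ and writing $v(t):=\|u^{n}(\cdot,t)\|^{2}_{L^{2}(\Om)}$, identity (\ref{oszacA}) becomes the pointwise inequality
\[
\dm v(t)+B(t)+C(t)+\lambda\|Du^{n}(\cdot,t)\|^{2}_{L^{2}(\Om)}\le C_{3}\bigl(v(t)+\|f_{1/n}(\cdot,t)\|^{2}_{H^{-1}(\Om)}\bigr),
\]
where $B(t),C(t)\ge0$ are the two nonlocal terms appearing in (\ref{oszacA}) with $w=u^{n}$.

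Next I would integrate over $(0,t)$. Since $v\in AC[0,T]$ with $v(0)=\|u^{n}_{0}\|^{2}_{L^{2}(\Om)}\le\|\uz\|^{2}_{L^{2}(\Om)}$ and $v-v(0)$ is continuous and vanishes at $0$, we have $\izt\dm v\,d\tau=\izj I^{1-\al}v(t)\mu(\al)\,d\al-v(0)\izj\frac{t^{1-\al}}{\Gamma(2-\al)}\mu(\al)\,d\al$, the last integral being $\le2\max\{1,T\}\izj\mu(\al)\,d\al$ on $[0,T]$; moreover, $\|f_{1/n}\|_{H^{-1}}$ being the mollification of $\|f\|_{H^{-1}}$ (up to the odd reflection), $\izt\|f_{1/n}(\cdot,\tau)\|^{2}_{H^{-1}(\Om)}\,d\tau\le\izt\|f(\cdot,\tau)\|^{2}_{H^{-1}(\Om)}\,d\tau+\delta_{n}$ uniformly in $t$. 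Denoting by $E(t)$ the whole left-hand side of the asserted estimate, one gets $E(t)\le a(t)+C_{3}\izt v(\tau)\,d\tau$ with $a(t):=C_{4}\|\uz\|^{2}_{L^{2}(\Om)}+C_{3}\izt\|f(\cdot,\tau)\|^{2}_{H^{-1}(\Om)}\,d\tau+\delta_{n}$, which is nondecreasing and bounded on $[0,T]$. Setting $V(t):=\izt v(\tau)\,d\tau$, the key point is that $\izj I^{1-\al}v(t)\mu(\al)\,d\al=(k*v)(t)$ (with $k$ from (\ref{k}), by Fubini) is one of the nonnegative terms contained in $E(t)$, so, using $g*k=1$ from Proposition~\ref{propone} and $g\ge0$,
\[
V(t)=\bigl((g*k)*v\bigr)(t)=\bigl(g*(k*v)\bigr)(t)\le(g*E)(t)\le(g*a)(t)+C_{3}(g*V)(t).
\]
Applying Lemma~\ref{gronwall} to $w=V$ with forcing $g*a$ and constant weight $C_{3}$, and then using that $a$ is nondecreasing together with (\ref{groncn}) for $w\equiv1$, one obtains $V(t)\le C_{5}\,a(t)$ with a convergent-series constant $C_{5}$ depending only on $\mu$, $T$, $C_{3}$. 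Substituting back gives $E(t)\le(1+C_{3}C_{5})\,a(t)$, which is exactly the claimed bound, with $\tilde c_{1}$ depending only on the data listed in the lemma and $\delta_{n}\to0$ uniformly in $t$.

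The step I expect to be the main obstacle is precisely this closing argument: one has to recognize that the self-referential term $\izt\|u^{n}(\cdot,\tau)\|^{2}_{L^{2}(\Om)}\,d\tau$ on the right is recovered from the genuinely controlled quantity $\izj I^{1-\al}\|u^{n}(\cdot,t)\|^{2}_{L^{2}(\Om)}\mu(\al)\,d\al=(k*v)(t)$ by convolving with $g$ (which works because $g*k=1$), after which Lemma~\ref{gronwall} applied with a nondecreasing forcing produces a $t$-local bound of exactly the shape demanded by the statement. The rest is bookkeeping: checking $\theta_{1},\theta_{2}<1$ so that the lower-order terms are truly absorbable, verifying that no constant acquires a dependence on $\Lambda$, and collecting the mollification and projection errors into a single $\delta_{n}\to0$ uniform in $t$.
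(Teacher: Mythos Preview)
Your argument is correct and follows the same overall strategy as the paper: test the Galerkin system against $u^{n}$, invoke the quadratic identity of Lemma~\ref{Alemat}, absorb the lower-order terms via H\"older, Sobolev interpolation and Young, and close with the distributed-order Gronwall lemma based on $g*k=1$.

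The only substantive difference is in how the Gronwall step is organized. The paper works at the level of the pointwise inequality $\dm v(t)\le h_{n}(t)v(t)+\frac{4}{\lambda}\|f_{1/n}(\cdot,t)\|_{H^{-1}}^{2}$: it applies $I^{(\mu)}$ (i.e.\ convolution with $g$) directly to this inequality, obtaining $v(t)\le v(0)+\tilde h_{n}(t)(g*v)(t)+\frac{4}{\lambda}(g*\|f_{1/n}\|_{H^{-1}}^{2})(t)$, runs Lemma~\ref{gronwall} on $v$ itself, and only afterwards integrates the pointwise inequality and plugs in the resulting bound for $\int_{0}^{t}h_{n}v\,d\tau$. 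You instead integrate first, observe that $V(t)=\int_{0}^{t}v=(g*k*v)(t)\le (g*E)(t)$, and run Lemma~\ref{gronwall} on $V$. Both routes use exactly the same two ingredients ($g*k=1$ and Lemma~\ref{gronwall}) and give the same final constant structure; your version is slightly more economical because it avoids the separate bookkeeping of $\int_{0}^{t}h_{n}(\tau)\|u^{n}(\cdot,\tau)\|^{2}\,d\tau$ via Mittag--Leffler functions that the paper carries out, while the paper's version yields the marginally stronger intermediate information of a pointwise-in-$t$ bound on $\|u^{n}(\cdot,t)\|_{L^{2}(\Omega)}^{2}$.
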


\begin{proof}
We multiply (\ref{przyb}) by $c_{n,m}(t)$ and sum it up from $1$ to $n.$
\[
\iO \dm u^{n}(x,t)u^{n}(x,t)dx + \sum_{i,j=1}^{N}\iO a_{i,j}^{n}(x,t)D_{j}u^{n}(x,t)D_{i}u^{n}(x,t)dx
\]
\[
 = \sum_{j=1}^{N}\iO b_{j}^{n}(x,t)D_{j} u^{n}(x,t)u^{n}(x,t)dx + \iO c^{n}(x,t)\abs{u^{n}(x,t)}^{2}dx
 \]
 \[
 + \left\langle f_{\frac{1}{n}}(x,t), u^{n}(x,t)\right\rangle_{H^{-1}(\Om) \times H_{0}^{1}(\Om)}.
\]
Using (\ref{elipt}) and applying Lemma~\ref{Alemat}, we get
\[
\dm \norm{u^{n}(\cdot,t)}_{L^{2}(\Om)}^{2} +2 \lambda \norm{D u^{n}(\cdot,t)}^{2}_{L^{2}(\Om)}
\]
\[
 + \izj  t^{-\al}\iO\abs{u^{n}(x,t) - u^{n}_{0}(x)}^{2}dx \map d\al
\]
\[
+  \izj \int_{0}^{t} (t-\tau)^{-\al-1}\iO \abs{u^{n}(x,t)-u^{n}(x,\tau)}^{2}dxd\tau \al \map  d\al
\]
\[
\leq 2\sum_{j=1}^{N}\iO b_{j}^{n}(x,t)D_{j} u^{n}(x,t)u^{n}(x,t)dx
\]
\eqq{
 + 2\iO c^{n}(x,t)\abs{u^{n}(x,t)}^{2}dx+ 2\left\langle f_{\frac{1}{n}}(x,t), u^{n}(x,t)\right\rangle_{H^{-1}(\Om) \times H_{0}^{1}(\Om)}.
}{ha}
Denoting by $b^{n} = (b_{1}^{n},\cdots b_{N}^{n})$
we can estimate the right hand side as follows.
\[
RHS \leq 2\norm{Du^{n}(\cdot,t)}_{L^{2}(\Om)}
\norm{u^{n}(\cdot,t)}_{L^{p_{1}}(\Om)}
\norm{b^{n}(\cdot,t)}_{L^{\frac{2p_{1}}{p_{1}-2}}(\Om)} +
\]
\[
2\norm{c^{n}(\cdot,t)}_{_{L^{\frac{p_{2}}{p_{2}-2}}(\Om)}}\hspace{-0.3cm}
\norm{u^{n}(\cdot,t)}^{2}_{_{L^{p_{2}}(\Om)}}\hspace{-0.2cm} + \frac{\lambda}{4}\norm{Du^{n}(\cdot,t)}^{2}_{_{L^{2}(\Om)}} + \frac{4}{\lambda}\norm{f_{\frac{1}{n}}(\cdot,t)}_{_{H^{-1}(\Om)}}^{2}
\]
\[
\leq \hspace{-0.1cm} \frac{\lambda}{2}\hspace{-0.1cm}\norm{Du^{n}(\cdot,t)}^{2}_{_{L^{2}(\Om)}}\hspace{-0.3cm} + \frac{4}{\lambda}\norm{b^{n}(\cdot,t)}^{2}_{_{L^{\frac{2p_{1}}{p_{1}-2}}(\Om)}}\hspace{-0.5cm}\left[\ve_{1}\norm{Du^{n}(\cdot,t)}^{2}_{_{L^{2}(\Om)}}
\hspace{-0.4cm}+\eta_{1}\hspace{-0.1cm}\norm{u^{n}(\cdot,t)}^{2}_{_{L^{2}(\Om)}}\hspace{-0.1cm} \right]
\]
\[
+2\norm{c^{n}(\cdot,t)}_{L^{\frac{p_{2}}{p_{2}-2}}(\Om)}\left(\ve_{2}\norm{Du^{n}(\cdot,t)}^{2}_{L^{2}(\Om)}+\eta_{2}
\norm{u^{n}(\cdot,t)}^{2}_{L^{2}(\Om)}\right)
\]
\[
+ \frac{4}{\lambda}\norm{f_{\frac{1}{n}}(\cdot,t)}_{H^{-1}(\Om)}^{2},
\]
where $\eta_{i}$ depends on $\ve_{i},p_{i}$ for $i=1,2.$ If we take $\ve_{1}, \ve_{2}$ small enough and use this inequality in (\ref{ha}) we obtain that
\[
\dm \norm{u^{n}(\cdot,t)}_{L^{2}(\Om)}^{2} + \lambda \norm{Du^{n}(\cdot,t)}^{2}_{L^{2}(\Om)}
\]
\[
 + \izj  t^{-\al}\iO\abs{u^{n}(x,t) - u^{n}_{0}(x)}^{2}dx \map d\al
\]
\[
+  \izj  \int_{0}^{t} (t-\tau)^{-\al-1}\iO \abs{u^{n}(x,t)-u^{n}(x,\tau)}^{2}dxd\tau \al \map d\al
\]
\eqq{
 \leq h_{n}(t)\norm{u^{n}(\cdot,t)}^{2}_{L^{2}(\Om)} + \frac{4}{ \lambda}\norm{f_{\frac{1}{n}}(\cdot,t)}_{H^{-1}(\Om)}^{2},
}{hb}
where $h_{n}(t)$ is a function which depends on constants $p_{1},p_{2},\lambda$ and norms $\norm{c^{n}(\cdot,t)}_{L^{\frac{p_{2}}{p_{2}-2}}(\Om)}$, $\norm{b^{n}(\cdot,t)}^{2}_{L^{\frac{2 p_{1}}{p_{1}-2}(\Om)}}$.
Passing over all the rest terms on the left hand side apart from the first one and applying to both sides of the inequality operator $\im$ we obtain
\[
\norm{u^{n}(\cdot,t)}_{L^{2}(\Om)}^{2} \leq \norm{u^{n}(\cdot,0)}_{L^{2}(\Om)}^{2} + \tilde{h}_{n}(t) \cdot ( g*\norm{u^{n}(\cdot,t)}^{2}_{L^{2}(\Om)})
\]
\[
  + \frac{4}{ \lambda}g*\norm{f_{\frac{1}{n}}(\cdot,t)}_{H^{-1}(\Om)}^{2},
\]
where $\tilde{h}_{n} = \norm{h_{n}}_{L^{\infty}(0,t)}.$
We apply Lemma \ref{gronwall} with function
\[
a(t) = \norm{u^{n}(\cdot,0)}_{L^{2}(\Om)}^{2} + \frac{4}{ \lambda}g*\norm{f_{\frac{1}{n}}(\cdot,t)}_{H^{-1}(\Om)}^{2}
\]
and we get that
\eqq{\norm{u^{n}(\cdot,t)}_{_{L^{2}(\Om)}}^{2}\hspace{-0.2cm} \leq \sum_{k=0}^{\infty}\tilde{h}_{n}^{k}(t)\left[g^{[k]}*\hspace{-0.1cm}\left[
\norm{u^{n}(\cdot,0)}_{_{L^{2}(\Om)}}^{2}\hspace{-0.2cm} +\frac{4}{ \lambda}g*\hspace{-0.1cm}\norm{f_{\frac{1}{n}}(\cdot,t)}_{_{H^{-1}(\Om)}}^{2}\right] \right]}{hc}
From Lemma~\ref{gronwall} we can also conclude that the series above is uniformly convergent.

\no We come back to inequality (\ref{hb}) and integrate it from $0$ to $t.$
\[
\izj  I^{1-\al}\norm{u^{n}(\cdot,t)}_{L^{2}(\Om)}^{2} \ma d\al + \lambda \izt\norm{D u^{n}(\cdot,\tau)}^{2}_{L^{2}(\Om)}d\tau
\]
\[
 +\izt\izj  \tau^{-\al}\iO \abs{u^{n}(x,\tau) - u^{n}_{0}(x)}^{2}dx \map  d\al d\tau
\]
\[
+ \izt\izj  \int_{0}^{\tau} (\tau-s)^{-\al-1}\iO \abs{u^{n}(x,\tau)-u^{n}(x,s)}^{2}dxds \al \map d\al d\tau
\]
\[
\leq \izj  I^{1-\al}\norm{u^{n}_{0}}^{2}_{L^{2}(\Om)}\ma  d\al+ \izt h_{n}(\tau)\norm{u^{n}(\cdot,\tau)}^{2}_{L^{2}(\Om)}d\tau
\]
\[
 + \izt\frac{4}{ \lambda}\norm{f_{\frac{1}{n}}(\cdot,\tau)}_{H^{-1}(\Om)}^{2}d\tau.
\]
From (\ref{hc}) and (\ref{groncn}) we conclude that
\[
\izt\hspace{-0.2cm} h_{n}(\tau)\norm{u^{n}(\cdot,\tau)}^{2}_{L^{2}(\Om)}d\tau \leq \hspace{-0.2cm}\izt \hspace{-0.2cm} h_{n}(\tau) \sum_{k=0}^{\infty}\tilde{h}_{n}^{k}(\tau)\left[g^{[k]}*
\norm{u^{n}(\cdot,0)}_{L^{2}(\Om)}^{2}\right]d\tau
\]
\[
 +  \frac{4}{\lambda}\izt h_{n}(\tau) \sum_{k=0}^{\infty}\tilde{h}_{n}^{k}(\tau)\left[g^{[k+1]}
 *\norm{f_{\frac{1}{n}}(\cdot,\tau)}_{H^{-1}(\Om)}^{2}\right]
\]
\[
\leq \tilde{h}_{n}(t)\izt \sum_{k=0}^{\infty} \tilde{h}_{n}^{k}(\tau) \cmt^{k} \ggg^{k} I^{k\gamma}  \norm{u^{n}(\cdot,0)}_{L^{2}(\Om)}^{2} \dt
\]
\[
+\frac{4}{\lambda} \tilde{h}_{n}(t) \izt  \sum_{k=0}^{\infty} \tilde{h}_{n}^{k}(\tau) \cmt^{k+1} \ggg^{k+1} I^{(k+1)\gamma} \norm{f_{\frac{1}{n}}(\cdot,\tau)}_{H^{-1}(\Om)}^{2}d\tau
\]
\[
\leq \tilde{h}_{n}(t) \sum_{k=0}^{\infty} \tilde{h}_{n}^{k}(t) \izt  \cmt^{k} \ggg^{k} I^{k\gamma}  \norm{u^{n}(\cdot,0)}_{L^{2}(\Om)}^{2} \dt
\]
\[
+\frac{4}{\lambda}  \sum_{k=0}^{\infty} \left[ \cmt \ggg   \tilde{h}_{n}(t)  \right]^{k+1} I^{(k+1)\gamma+1} \norm{f_{\frac{1}{n}}(\cdot,t)}_{H^{-1}(\Om)}^{2}
\]
\[
\leq \tilde{h}_{n}(t) \norm{u^{n}(\cdot,0)}_{L^{2}(\Om)}^{2}
\sum_{k=0}^{\infty} \tilde{h}_{n}^{k}(t)   \cmt^{k} \ggg^{k} \frac{t^{k\gamma+1}}{\Gamma(k\gamma+2)}
\]
\[
+\frac{4}{\lambda}  \sum_{k=0}^{\infty} \left[ \cmt \ggg   \tilde{h}_{n}(t)  \right]^{k+1} \frac{t^{(k+1)\gamma}}{\Gamma((k+1)\gamma+1)} \izt  \norm{f_{\frac{1}{n}}(\cdot,\tau)}_{H^{-1}(\Om)}^{2} \dt
\]
\[
\leq \tilde{h}_{n}(t) tE_{\gamma, 2}\left( \tilde{h}_{n}(t)   \cmt \ggg^{k} t^{\gamma} \right) \norm{u^{n}(\cdot,0)}_{L^{2}(\Om)}^{2}
\]
\[
+\frac{4}{\lambda}   E_{\gamma, 1} \left(
\cmt \ggg   \tilde{h}_{n}(t) t^{\gamma}\right) \izt \norm{f_{\frac{1}{n}}(\cdot,\tau)}_{H^{-1}(\Om)}^{2} \dt,
\]
\[
\leq c_{1} \left( \norm{u^{n}(\cdot,0)}_{L^{2}(\Om)}^{2} +   \izt \norm{f_{\frac{1}{n}}(\cdot,\tau)}_{H^{-1}(\Om)}^{2} \dt \right) ,
\]
where
\[
c_{1}= \tilde{h}_{n}(T) TE_{\gamma, 2}\left( \tilde{h}_{n}(T)   \cmt \ggg^{k} T^{\gamma} \right) +\frac{4}{\lambda}   E_{\gamma, 1} \left(
\cmt \ggg   \tilde{h}_{n}(T) T^{\gamma}\right)
\]
 and  $E_{\gamma, \beta}$ denotes Mittag-Leffler function, i.e. $E_{\gamma, \beta}(x)= \sum_{k=0}^{\infty} \frac{x^{k}}{\Gamma(k\gamma+ \beta)}$. Finally, we notice that
\[
\izt \norm{f_{\frac{1}{n}}(\cdot,\tau)}_{H^{-1}(\Om)}^{2} d\tau \leq \hspace{-0.2cm} \izt \hspace{-0.2cm} \norm{f(\cdot,\tau)}_{H^{-1}(\Om)}^{2} d\tau + \int_{t}^{t+\frac{1}{n}}\hspace{-0.2cm}\norm{f(\cdot,\tau)}_{H^{-1}(\Om)}^{2} d\tau.
\]
Denoting by
\[
\delta_{n} = \sup_{t \in [0,T-\frac{1}{n})} \int_{t}^{t+\frac{1}{n}}\norm{f(\cdot,\tau)}_{H^{-1}(\Om)}^{2} d\tau
\]
and using assumptions concerning $f$ we obtain that $\delta_{n}\rightarrow 0$ as $n\rightarrow \infty$ uniformly with respect to $t.$
The application of the Bessel inequality $\norm{\uz^{n}}_{L^{2}(\Om)} \leq \norm{\uz}_{L^{2}(\Om)}$  and the estimate
\[
\izj  I^{1-\al}\norm{u^{n}_{0}}^{2}_{L^{2}(\Om)} \ma  d\al \leq \norm{\uz}_{L^{2}(\Om)}^{2} \izj \izt (t-\tau)^{-\al} \dt \map d\al
\]
\[
 = \norm{\uz}_{L^{2}(\Om)}^{2}\izj \frac{t^{1-\al}}{\Gamma(2-\al)} \ma d\al \leq 2 \max\{1,T\}c_{\mu} \norm{\uz}_{L^{2}(\Om)}^{2}
\]
finishes the proof.
\end{proof}

\subsection{Limit passage}
\label{limpass}

From Lemma \ref{oszac1} follows that the sequence $\left\{\nabla u^{n}\right\}$ is bounded in $L^{2}(0,T;L^{2}(\Om)).$ Thus we can choose subsequence $\left\{ u^{n}\right\}$ (still indexed by $n$) such, that
\eqq{u^{n}\rightharpoonup u \textrm { in } L^{2}(0,T;H^{1}_{0}(\Om)).}{zb1}
Denote by $\norm{\cdot}$ the norm $\norm{\cdot}_{L^{2}(\Om)}$. We notice that
\[
\norm{I^{1-\al}u^{n}(\cdot,t)} = \sup_{\norm{h}=1} \iO \abs{h(x)\frac{1}{\Gamma(1-\al)}\izt (t-\tau)^{-\al}u^{n}(x,\tau)d\tau}dx
\]
\[
 \leq \sup_{\norm{h}=1} \izt \frac{(t-\tau)^{-\al}}{\Gamma(1-\al)} \iO \abs{h(x)u^{n}(x,\tau)}dxd\tau
\]
\[
\leq \izt \frac{(t-\tau)^{-\al}}{2\Gamma(1-\al)}\norm{u^{n}(\cdot,\tau)}^{2}d\tau + \izt \frac{(t-\tau)^{-\al}}{2\Gamma(1-\al)}d\tau
\]
\[
 = \frac{1}{2}I^{1-\al}\norm{u^{n}(\cdot,t)}^{2} + \frac{T^{1-\al}}{2\Gamma(2-\al)}.
\]
Using this calculations, we have
\[
\norm{\izj  I^{1-\al}u^{n}(\cdot,t) \ma d\al} \leq \izj  \norm{I^{1-\al}u^{n}(\cdot,t)} \ma d\al
\]
\[
 \leq \frac{1}{2}\izj I^{1-\al}\norm{u^{n}(\cdot,t)}^{2} \ma  d\al +\frac{1}{2} \izj T^{1-\al} \frac{\ma}{\Gamma(2-\al)}d\al ,
\]
and the last term is bounded by $\max\{T,1\} c_{\mu}$.   From Lemma \ref{oszac1} we get the bound for the sequence $\left\{\izj  I^{1-\al}u^{n}(\cdot,t) \ma  d\al\right\}$ in $L^{\infty}(0,T;L^{2}(\Om)).$\\
We will find the estimate for $\dm u^{n}.$ To that end, we choose arbitrary $w \in H_{0}^{1}(\Om).$ Then there exist coefficients $d_{m}$ such, that $w(x) = \sum_{m=1}^{\infty}d_{m}\vf_{m}(x).$ We multiply (\ref{przyb}) by $d_{m}$ and sum it up from 1 to $n.$ Denoting $w^{n} = \sum_{m=1}^{n}d_{m}\vf_{m}(x)$ we get
\[
\iO \dm u^{n}(x,t)w^{n}(x)dx = -\sum_{i,j=1}^{N}a^{n}_{i,j}(x,t)D_{j}u^{n}(x,t)D_{i}w^{n}(x)dx
\]
\[
+\sum_{j=1}^{N}\iO b_{j}^{n}(x,t)D_{j}u^{n}(x,t)w^{n}(x)dx + \iO c^{n}(x,t)u^{n}(x,t)w^{n}(x)dx
\]
\[
+ \left\langle f_{\frac{1}{n}}(x,t),w^{n}(x)\right\rangle_{H^{-1}(\Om) \times H_{0}^{1}(\Om)}.
\]
We note that $\iO \dm u^{n}(x,t)w(x)dx= \iO \dm u^{n}(x,t)w^{n}(x)dx$.
Using (\ref{elipt}) and H\"older inequality we obtain
\[
\abs{\iO \dm u^{n}(x,t)w(x)dx} \leq \Lambda \norm{D u^{n}(\cdot,t)}_{L^{2}(\Om)}\norm{D w^{n}}_{L^{2}(\Om)}
\]
\[
 + \norm{b}_{L^{\frac{2p_{1}}{p_{1}-2}}(\Om)}\norm{Du^{n}}_{L^{2}(\Om)}\norm{w^{n}}_{L^{p_{1}}(\Om)}\]
\[
 +\norm{c}_{L^{\frac{p_{2}}{p_{2}-2}}(\Om)}\norm{u^{n}}_{L^{p_{2}}(\Om)}\norm{w^{n}}_{L^{p_{2}}(\Om)}+ \norm{f_{\frac{1}{n}}}_{H^{-1}(\Om)}\norm{w^{n}}_{H_{0}^{1}(\Om)}.
\]
Taking advantage from the last inequality, the estimate from Lemma \ref{oszac1}, making use of Sobolev embedding and Poincare inequality we conclude that
\[
\norm{D^{(\mu)}u^{n}}_{L^{2}(0,T;H^{-1}(\Om))} = \norm{\poch \izj \ma I^{1-\al}[u^{n}-\uz^{n}]d\al}_{L^{2}(0,T;H^{-1}(\Om))}
\]
\eqq{
 \leq c_{2} \left( \norm{u_{0}}^{2}_{L^{2}(\Om)} +\izt \norm{f(\cdot,\tau)}_{H^{-1}(\Om)}^{2} d\tau \right)+ \delta_{n},
}{dmszac}
where $c_{2} ={c}_{2}\left( \Lambda, \widetilde{c}_{1} \right)$ and $\widetilde{c}_{1}$, $\delta_{n}$ are from lemma~\ref{oszac1}. Thus the sequence
$\{ \izj I^{1-\al}u^{n}(\cdot,t) \ma d\al \}$  is bounded in $_{0}H^{1}(0,T;H^{-1}(\Om))$ and then exists a subsequence (still indexed by $n$) such, that
\eqq{\izj  I^{1-\al}[u^{n}-\uz^{n}]  \ma d\al\rightharpoonup v \textrm{ in } _{0}H^{1}(0,T;H^{-1}(\Om)).}{zb2}
We will show that $\poch v = \poch \izj \ma I^{1-\al}[u-\uz] d\al$ in a weak sense. Assume that $\Phi \in C_{0}^{\infty}(0,T)$ and $\vf \in H_{0}^{1}(\Om).$ Then
\[
\int_{0}^{T} \Phi(t)\left\langle \poch v(\cdot,t),\vf\right\rangle_{H^{-1}(\Om) \times H_{0}^{1}(\Om)}dt
\]
\[
 = \lim_{n\rightarrow \infty} \int_{0}^{T} \Phi(t) \iO\poch \izj  I^{1-\al}[u^{n}-\uz^{n}] \ma d\al\vf(x)dxdt
\]
\[
 =- \lim_{n\rightarrow \infty}\iO \int_{0}^{T} \Phi'(t)\izj  I^{1-\al}[u^{n}-\uz^{n}]\ma d\al dt\vf(x)dx
\]
\[
=- \lim_{n\rightarrow \infty}\iO \int_{0}^{T} \Phi'(t)\izj  I^{1-\al}[u-\uz] \ma d\al dt\vf(x)dx,
\]
where the last equality is implied by the fact that $I^{1-\al}$ is continuous in  $L^{2}(0,T),$ so it is also weakly continuous. Thus  $\poch v = \poch\izj  I^{1-\al}[u-\uz]\ma  d\al $ in a weak sense, so on the subsequence we have the following weak convergence in $H_{0}^{1}(0,T;H^{-1}(\Om))$
\eqq{\izj  I^{1-\al}[u^{n}(x,t)-\uz^{n}(x)] \ma d\al \rightharpoonup \izj  I^{1-\al}[u(x,t)-\uz(x)] \ma d\al.}{zb3}
Having the above convergence we can obtain (\ref{slabadef}).
By density argument it is enough to show  (\ref{slabadef}) for $w(x) = \sum_{m=1}^{K}d_{m}\vf_{m}(x),$ where $d_{m}$ are some arbitrary constants. We multiply (\ref{przyb}) for selected subsequence by $d_{m}$ and sum it up from 1 to $K$. Next we fix $t_{0}\in (0,T)$ and multiply the equation by $\eta_{\ve}(t+t_{0})$, where $\eta_{\ve}$ stands for standard smoothing kernel. Integrating obtained equality from zero to $T,$ we get
\[
\int_{0}^{T}\eta_{\ve}(t+t_{0})\iO\izj\poch I^{1-\al}[u^{n}(x,t)-\uz^{n}(x)] \ma d\al w(x)dx
\]
\[
 + \sum_{j,j=1}^{N}\int_{0}^{T}\eta_{\ve}(t+t_{0})\iO a_{i,j}^{n}(x,t)D_{j}u^{n}(x,t)D_{i}w(x)dxdt
 \]
 \[
= \sum_{j=1}^{N}\int_{0}^{T}\iO b_{j}^{n}(x,t)D_{j}u^{n}(x,t)D_{i}w(x)\eta_{\ve}(t+t_{0})dxdt
 \]
\[
+\int_{0}^{T}\hspace{-0.2cm}\left(\iO c^{n}(x,t)u^{n}(x,t)D_{i}w(x)dx+
\iO f_{\frac{1}{n}}(x,t)w(x)dx\right)\eta_{\ve}(t+t_{0})dt.
\]
Firstly, we pass to the limit with $n$, and then with $\ve.$ For $\ve < T -t_{0}$, using (\ref{zb3}) we have
\[
\int_{0}^{T}\eta_{\ve}(t+t_{0})\iO\poch\izj I^{1-\al}[u^{n}(x,t)-\uz^{n}(x)]\ma  d\al w(x)dx
\]
\[
= - \int_{0}^{T}\eta'_{\ve}(t+t_{0})\iO\izj I^{1-\al}[u^{n}(x,t)-\uz^{n}(x)] \ma d\al w(x)dx
\]
\[
\underset{n\rightarrow \infty}{\longrightarrow} - \int_{0}^{T}\eta'_{\ve}(t+t_{0})\iO\izj  I^{1-\al}[u(x,t)-\uz(x)] \ma d\al w(x)dx
\]
\[
=\int_{0}^{T}\eta_{\ve}(t+t_{0})\poch  \iO \izj I^{1-\al}[u(x,t)-\uz(x)] \ma  d\al w(x)dx
\]
\[
\underset{\ve\rightarrow 0}{\longrightarrow} \poch \iO \izj I^{1-\al}[u(x,t_{0})-\uz(x)] \ma d\al w(x)dx \hd \textrm{ for } a.a. \hd t_{0} \in (0,T).
\]
We proceed similarly with remaining terms. We see that $\eta_{\ve}(t+t_{0})D_{i}w(x)$ is smooth in $\Om^{T}$, thus having in mind that $a_{i,j}^{n}(x,t)\rightarrow a_{i,j}(x,t)$ in $L^{2}(\Om^{T})$ and  $D_{j}u^{n}(x,t)\rightharpoonup D_{j}u(x,t)$ in $L^{2}(\Om^{T})$ when $n\rightarrow \infty,$ we get
\[
\int_{0}^{T}\eta_{\ve}(t+t_{0})\iO a_{i,j}^{n}(x,t)D_{j}u^{n}(x,t)D_{i}w(x)dxdt
\]
\[
\underset{n\rightarrow \infty}{\longrightarrow}
 \int_{0}^{T}\eta_{\ve}(t+t_{0})\iO a_{i,j}(x,t)D_{j}u(x,t)D_{i}w(x)dxdt
\]
\[
\underset{\ve\rightarrow 0}{\longrightarrow} \iO a_{i,j}(x,t_{0})D_{j}u(x,t_{0})D_{i}w(x)dx \hd \textrm{ for } \hd  a.a. \hd t_{0} \in (0,T).
\]
From assumptions $b_{j} \in L^{\infty}(0,T;L^{2}(\Om)),$ thus $b_{j}^{n}\rightarrow b_{j}$ in $L^{2}(\Om^{T})$ and we can pass to the limit
\[
\int_{0}^{T}\eta_{\ve}(t+t_{0})\iO b^{n}_{j}(x,t)D_{j}u^{n}(x,t)w(x)dxdt
\]
\[
\underset{n\rightarrow \infty}{\longrightarrow} \int_{0}^{T}\eta_{\ve}(t+t_{0})\iO b_{j}(x,t)D_{j}u(x,t)w(x)dxdt
\]
\[
\underset{\ve\rightarrow 0}{\longrightarrow}\iO b_{j}(x,t_{0})D_{j}u(x,t_{0})w(x)dx \hd \textrm{ for } \hd a.a. \hd  t_{0} \in (0,T).
\]
We see that regularity of $c$ can be weaker then other coefficients, so we have to consider two cases. If $p_{2} \in [2,4],$ then $c\in L^{2}(\Om)$ and we can pass to the limit as in former integral. For $p_{2} > 4$ we use interpolation inequality
\[
\norm{u^{n}}_{L^{2}(0,T;L^{\frac{p_{2}}{2}}(\Om))} \leq C \norm{Du^{n}}_{L^{2}(0,T;L^{2}(\Om))}^{\theta}\norm{u^{n}}_{L^{2}(0,T;L^{2}(\Om))}^{1-\theta}.
\]
Thus $u^{n}$ is bounded in $L^{2}(0,T;L^{\frac{p_{2}}{2}}(\Om))$ - dual to $L^{2}(0,T;L^{\frac{p_{2}}{p_{2}-2}}(\Om))$ and again we can pass to the limit.
It remains to pass to the limit in the last term. We have
\[
\int_{0}^{T}\eta_{\ve}(t+t_{0})\iO f_{\frac{1}{n}}(x,t)w(x)dxdt
\]
\[
 \underset{n\rightarrow \infty}{\longrightarrow} \int_{0}^{T}\eta_{\ve}(t+t_{0}) \left\langle f(x,t)w(x)\right\rangle_{H^{-1}(\Om) \times H_{0}^{1}(\Om)} dt
\]
\[
 \underset{\ve\rightarrow 0}{\longrightarrow} \left\langle f(x,t_{0})w(x)\right\rangle_{H^{-1}(\Om) \times H_{0}^{1}(\Om)} \hd \textrm{ for } \hd a.a. \hd t_{0} \in (0,T)
\]
and that way we obtained (\ref{slabadef}) for any $\vp (x)=\sum_{m=1}^{K}d_{m} \vp_{m} $ and a.a. $t\in (0,T)$. Applying density argument we obtain (\ref{slabadef}) for all $\vp\in \hjzo $ and a.a. $t\in (0,T)$.

\subsection{Uniqueness}
 In order to prove uniqueness of the solution we need to establish the following lemma.
\begin{lem}\label{poprawa2}
Assume that  $w\in AC[0,t] $  and  $t \leq 1$. Then  the following inequality holds
\eqq{
\izj  \izt\frac{d}{d\tau} I^{1-\al}w(\tau) \cdot w(\tau)d\tau \ma d\al \geq \frac{c_{\mu}(1-\gamma)}{4\ggg} t^{-\gamma}\izt \abs{w(\tau)}^{2}d\tau.
}{ay}
\end{lem}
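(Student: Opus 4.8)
The plan is to collapse the $\al$-integration onto the single kernel $k$ from (\ref{k}) and then to use the pointwise lower bound on $k$ coming from (\ref{defg}). For $w\in AC[0,t]$ the map $\tau\mapsto I^{1-\al}w(\tau)$ is absolutely continuous with $I^{1-\al}w(0^{+})=0$, so $\frac{d}{d\tau}I^{1-\al}w=\partial^{\al}w$. Interchanging the order of the $\al$- and $s$-integrations (the majorant being supplied by the estimate in (\ref{xc}) together with $w\in L^{\infty}(0,t)$) gives $\izj (I^{1-\al}w)(\tau)\,\ma\,d\al=(k*w)(\tau)$, because $\izj (\tau-s)^{-\al}\mg\,d\al=k(\tau-s)$ by (\ref{k}); since moreover $\frac{d}{ds}(I^{1-\al}w)(s)$ is, after multiplication by $\ma$, integrable on $(0,t)\times(0,1)$, one may differentiate in $\tau$ under the $\al$-integral and obtain $\izj\partial^{\al}w(\tau)\,\ma\,d\al=\frac{d}{d\tau}(k*w)(\tau)$ for a.a. $\tau$. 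Hence the left-hand side of (\ref{ay}) equals $\izt\frac{d}{d\tau}(k*w)(\tau)\,w(\tau)\,\dt$.

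The heart of the argument is then the pointwise identity, valid for smooth $w$ and $\tau>0$,
\[
w(\tau)\frac{d}{d\tau}(k*w)(\tau)=\frac12\frac{d}{d\tau}(k*w^{2})(\tau)+\frac12 k(\tau)w(\tau)^{2}+\frac12\int_{0}^{\tau}(-k'(\tau-s))\,(w(\tau)-w(s))^{2}\,ds .
\]
It is the pointwise counterpart of Lemma~\ref{Alemat}: starting from $\frac{d}{d\tau}(k*w)=k(\tau)w(0)+(k*w')$, subtracting $\frac12\frac{d}{d\tau}(k*w^{2})$ and integrating by parts in $\int_{0}^{\tau}k(\tau-s)\,[w(\tau)-w(s)]\,w'(s)\,ds$, one uses that $k$ is nonnegative and nonincreasing (a nonnegative superposition of the nonincreasing maps $\sigma\mapsto\sigma^{-\al}$), so that $-k'\ge0$, and that $\sigma k(\sigma)\to0$ as $\sigma\to0^{+}$ (a consequence of $k\in L^{1}_{loc}$, see (\ref{xc})); the boundary term at $s=\tau$ then vanishes and the terms carrying $w(0)$ collapse to $\frac12 k(\tau)w(\tau)^{2}$. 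Integrating over $\tau\in(0,t)$ and discarding the two nonnegative quantities $\frac12(k*w^{2})(t)$ and $\frac12\izt\int_{0}^{\tau}(-k'(\tau-s))(w(\tau)-w(s))^{2}\,ds\,\dt$ gives
\[
\izt\frac{d}{d\tau}(k*w)(\tau)\,w(\tau)\,\dt\ \ge\ \frac12\izt k(\tau)\,w(\tau)^{2}\,\dt .
\]
Approximating an arbitrary $w\in AC[0,t]$ by smooth functions (converging uniformly and with derivatives converging in $L^{1}$) and using Young's inequality to pass to the limit on the left, exactly as in the proof referenced for Lemma~\ref{Alemat} (see \cite{KY}), shows that this inequality holds for every $w\in AC[0,t]$.

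Finally I would bound $k$ from below. For $0<\tau\le t\le1$ and $\al\in[\gamma,1-\gamma]$ one has $\tau^{-\al}\ge\tau^{-\gamma}$, and since $\Gamma$ is decreasing on $(0,1]$ and $1-\al\ge\gamma$ also $\frac1{\Gamma(1-\al)}\ge\frac1{\ggg}$; hence, by (\ref{k}) and (\ref{defg}),
\[
k(\tau)\ \ge\ \int_{\gamma}^{1-\gamma}\frac{\tau^{-\al}}{\Gamma(1-\al)}\,\ma\,d\al\ \ge\ \frac{\tau^{-\gamma}}{\ggg}\int_{\gamma}^{1-\gamma}\ma\,d\al\ =\ \frac{(1-\gamma)\,c_{\mu}}{2\,\ggg}\,\tau^{-\gamma}.
\]
Inserting this into the previous display and using $\tau^{-\gamma}\ge t^{-\gamma}$ for $\tau\le t$ yields $\frac12\izt k(\tau)w(\tau)^{2}\,\dt\ge\frac{(1-\gamma)c_{\mu}}{4\,\ggg}\,t^{-\gamma}\izt|w(\tau)|^{2}\,\dt$, which is precisely (\ref{ay}).

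The routine parts are the two interchanges of integration and the differentiation under the integral sign in the first step. The only genuinely delicate point is the integration by parts in the pointwise identity — controlling the singularity of $k$ at the origin against the modulus of continuity of $w$ — but this is exactly the issue already settled in the proof of Lemma~\ref{Alemat}, so it can be imported without change.
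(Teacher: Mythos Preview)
Your proof is correct and rests on the same energy identity as the paper, but you organize the two integrations in the opposite order. The paper applies the per-$\alpha$ inequality $\izt \partial^{\al}w\cdot w\,d\tau \ge \frac{t^{-\al}}{2\Gamma(1-\al)}\izt|w|^{2}d\tau$ (quoted from \cite[Prop.~10]{KY}) and then integrates in $\al$ against $\ma$, bounding the resulting $\al$-integral from below via (\ref{defg}). You instead collapse the $\al$-integral first to the single kernel $k$, prove the analogue of that inequality directly for $k$ via the pointwise identity (which is exactly the scalar version of Lemma~\ref{Alemat}), and only then invoke (\ref{defg}) to bound $k(\tau)$ pointwise. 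The two routes are equivalent in content; yours is slightly more self-contained because the identity is spelled out rather than imported, and it yields the marginally sharper intermediate bound $\frac12\izt k(\tau)|w(\tau)|^{2}d\tau$ (with $k(\tau)$ rather than $k(t)$) before the final estimate. One small remark: your justification that $\sigma k(\sigma)\to 0$ ``as a consequence of $k\in L^{1}_{loc}$'' needs the monotonicity of $k$ that you already noted (for a nonincreasing $L^{1}_{loc}$ function, $\sigma k(\sigma)\le 2\int_{\sigma/2}^{\sigma}k$); $L^{1}_{loc}$ alone would not suffice.
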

\begin{proof}
From proposition 10 in \cite{KY} we obtain, that if $w \in AC[0,t],$ then
\[
\izt\frac{d}{d\tau} I^{1-\al}w(\tau) \cdot w(\tau)d\tau \geq \frac{t^{-\al}}{2\Gamma(1-\al)}\izt \abs{w(\tau)}^{2}d\tau.
\]
Multiplying both sides of this inequality by $\ma$ and integrating with respect to $\al$ on $(0,1)$ we have
\[
\izj \hspace{-0.2cm} \izt\frac{d}{d\tau} I^{1-\al}w(\tau) \cdot w(\tau)d\tau \ma d\al \geq \hspace{-0.1cm} \izj \hspace{-0.2cm}  \frac{t^{-\al}}{2\Gamma(1-\al)}\izt \abs{w(\tau)}^{2}d\tau \ma  d\al.
\]
Using (\ref{defg}) we can estimate the right hand side as follows
\[
\izj  \frac{t^{-\al}}{2\Gamma(1-\al)}\izt \abs{w(\tau)}^{2}d\tau \ma  d\al \geq \frac{1}{2} \izt \abs{w(\tau)}^{2}d\tau \hspace{-0.2cm} \int_{\gamma}^{1-\gamma}\hspace{-0.1cm} \frac{t^{-\al}\ma}{\Gamma(1-\al)}  d\al
\]
\[
 \geq \frac{1}{2} \izt \abs{w(\tau)}^{2}d\tau \frac{t^{-\gamma}}{\ggg}\int_{\gamma}^{1-\gamma}\ma d\al = \frac{c_{\mu}(1-\gamma)}{4\ggg} t^{-\gamma}\izt \abs{w(\tau)}^{2}d\tau.
\]
\end{proof}

\no Now  we  prove uniqueness. Assume that $u \in L^{2}(0,T;H_{0}^{1}(\Om))$ is such that
\eqq{
\izj I^{1-\al}u(x,t) \hspace{0.1cm}  \ma d\al \in  {}_{0}H^{1}(0,T;H^{-1}(\Om))
}{xd}
satisfies (\ref{slabadef}) with  $\uz \equiv 0$ and $f \equiv 0.$ Then, for a.a. $t \in (0,T),$ we have
\[
\left\langle \poch \izj \hspace{-0.2cm}\ma I^{1-\al}u(x,t)d\al,\vf(x)
\right\rangle + \sum_{i,j=1}^{N}\iO\hspace{-0.1cm} a_{i,j}(x,t)D_{j}u(x,t)D_{i}\vf(x)dx
\]
\eqq{
=\sum_{j=1}^{N}\iO b_{j}(x,t)D_{j}u(x,t)\vf(x)dx + \iO c(x,t)u(x,t)\vf(x)dx,
}{poprawa1}
where the brackets $\langle \cdot , \cdot  \rangle$ denotes the duality pairing of $H^{1}_{0}(\Omega)$ and $H^{-1}(\Omega)$.
Let us denote $u_{n}(x,\tau) = \sum_{k=1}^{n}d_{k}(\tau)\vf_{k}(x),$ where $d_{k}(\tau) = \iO u(x,\tau)\vf_{k}(x)dx.$ We set $\vf = \vf_{k}$ in (\ref{poprawa1}), multiply it by $d_{k}$ and sum it up from $k=1$ to $n.$
\[
\izt \left\langle \poch \izj  I^{1-\al}u(x,\tau ) \ma d\al,u_{n}(x,\tau)\right\rangle \dt
\]
\[
  + \sum_{i,j=1}^{N} \izt \iO a_{i,j}(x,\tau)D_{j}u(x,\tau)D_{i}u_{n}(x,\tau)dx \dt
\]
\[
=\sum_{j=1}^{N} \izt \hspace{-0.2cm}\iO b_{j}(x,\tau)D_{j}u(x,\tau)u_{n}(x,\tau)dx \dt  + \izt\hspace{-0.2cm} \iO c(x,\tau)|u_{n}(x,\tau)|^{2}dx \dt .
\]
Making use of convergence $u_{n}\longrightarrow u$ in $L^{2}(0,t;H_{0}^{1}(\Om))$ we may write
\[
\lim_{n \rightarrow \infty}\izt \left\langle \poch \izj  I^{1-\al}u(x,\tau ) \ma d\al,u_{n}(x,\tau)\right\rangle \dt
\]
\[
  + \sum_{i,j=1}^{N} \izt \iO a_{i,j}(x,\tau)D_{j}u(x,\tau)D_{i}u(x,\tau)dx \dt
\]
\eqq{
=\sum_{j=1}^{N} \izt \iO b_{j}(x,\tau)D_{j}u(x,\tau)u(x,\tau)dx \dt  + \izt \iO c(x,\tau)|u(x,\tau)|^{2}dx \dt .
}{poprawa3}
Due to orthogonality of $\vf_{k}$ we observe that (see formula (56) in \cite{KY} for details)
\[
\izt \left\langle \frac{d}{d\tau} \izj  I^{1-\al}u(x,\tau ) \ma d\al,u_{n}(x,\tau)\right\rangle \dt
\]
\[
= \izt \left\langle \frac{d}{d\tau} \izj  I^{1-\al}u_{n}(x,\tau ) \ma d\al,u_{n}(x,\tau)\right\rangle \dt.
\]
Applying lemma~\ref{poprawa2} we obtain that
\[
\lim_{n \rightarrow \infty}\izt \left\langle \frac{d}{d\tau} \izj  I^{1-\al}u(x,\tau ) \ma d\al,u_{n}(x,\tau)\right\rangle \dt
\]
\[
= \lim_{n \rightarrow \infty}\iO \izt \frac{d}{d\tau} \izj  I^{1-\al}u_{n}(x,\tau ) \ma d\al \cdot u_{n}(x,\tau) \dt dx
\]
\[
\geq \frac{c_{\mu} (1-\gamma)}{4\Gamma(\gamma)}t^{-\gamma} \liminf_{n \rightarrow \infty} \iO \izt \abs{u_{n}(x,\tau)}^{2} d\tau dx
\]
\[
 \geq \frac{c_{\mu} (1-\gamma)}{4\Gamma(\gamma)}t^{-\gamma}  \iO \izt \abs{u(x,\tau)}^{2} d\tau dx.
\]
Finally, using ellipticity condition in (\ref{poprawa3}) and estimating the right hand side as in the proof of lemma~4 we obtain that
\[
\frac{c_{\mu}(1-\gamma)}{4\ggg} t^{-\gamma}\iO\izt\abs{u(x,\tau)}^{2}d\tau dx +\frac{\lambda}{2} \izt \norm{Du}^{2}_{L^{2}(\Om)}d\tau
\]
\[
\leq c_{0} \izt \norm{u(\cdot,\tau)}^{2}_{L^{2}(\Om)}d\tau,
\]
where $c_{0}$ depends on $\lambda$ and norms $b$ in $L^{\infty}(0,T;L^{\frac{2p_{1}}{p_{1}-2}}(\Om))$, $c$ in $L^{\infty}(0,T;L^{\frac{p_{2}}{p_{2}-2}}(\Om)).$
From this inequality we  deduce that $u \equiv  0$ on $\Om \times (0,t)$ for $t$ small enough. Repeating this argument we obtain that $u \equiv 0$ on $\Om \times (0,T),$ which  proves the uniqueness of weak solution.

\subsection{Continuity at zero}
In this section we will show that under assumption
\eqq{
\int_{\frac{1}{2}}^{1} \ma d\al > 0
}{mujd}
we have  $u \in C([0,T];H^{-1}(\Om))$ and $u|_{t=0} = \uz.$\\
We have already obtained that
\[
\izj I^{1-\al}[u(x,t) - \uz(x)] \ma d\al = k*[u(x,\cdot)- u_{0}(x)](t)
\]
is absolutely continuous function with values in $H^{-1}(\Om)$, where $k$ was defined in (\ref{k}). We also have $k*[u(x,\cdot)- u_{0}(x)](0)=0$, thus we have
\eqq{
g * \poch \left( k* [u(x,\cdot)- u_{0}(x)](t) \right) = \poch (g*k*(u-\uz)(t)) = u(\cdot,t) - \uz,
}{uuz}
because from proposition~\ref{propone} we have $g*k=1$.

From estimates we only have that $ \ddt \left(k*[u(x,\cdot)- u_{0}(x)](t) \right) $ is in $ L^{2}(0,T;H^{-1}(\Om))$ and from lemma~\ref{oszacg1} we conclude that $g \in L^{1}(0,T),$ thus in general, it is not enough to deduce that the convolution of this functions is continuous. However, under assumption (\ref{mujd}) we are able to obtain, that the kernel $g$ belongs to  $L^{2}(0,T).$ From assumption (\ref{mujd}), reasoning similarly as in the proof of (\ref{defg}),  we see that there exists    $\gamma_{0} \in \left(0,\frac{1}{4}\right)$ such that
\eqq{
 b_{\mu} \equiv \int_{\frac{1}{2}+\gamma_{0}}^{1-\gamma_{0}} \ma d\al  > 0
}{gamma1}
and we may estimate $g$ as follows
\[
g(t) \leq \frac{1}{\pi}\izi e^{-rt}\frac{1}{\izj \sin (\pi\al) r^{\al}\ma d\al}dr.
\]
Using (\ref{gamma1}) we see that for $r\leq 1$
\[
\izj \sin (\pi\al) r^{\al}\ma d\al \geq \int_{\frac{1}{2}+\gamma_{0}}^{1-\gamma_{0}}\sin (\pi\al) r^{\al}\ma d\al
\]
\[
\geq \sin (\pi (1-\gamma_{0})) r^{1-\gamma_{0}}b_{\mu} \equiv \bar{b}_{\mu} r^{1-\gamma_{0}},
\]
where the constant $\bar{b}_{\mu} > 0$ depends only on $\mu.$ Analogical, for $r>1$
\[
\izj \sin (\pi\al) r^{\al}\ma d\al \geq \bar{b}_{\mu} r^{\frac{1}{2}+\gamma_{0}}.
\]
Then we have
\[
g(t) \leq \frac{1}{\bar{b}_{\mu}\pi} \left(\izj e^{-rt} r^{\gamma_{0}-1}dr +\int_{1}^{\infty} e^{-rt} r^{-(\gamma_{0}+\frac{1}{2})}dr\right)
\]
\[
\leq \frac{1}{\bar{b}_{\mu}\pi} \left(\izj  r^{\gamma_{0}-1}dr +\int_{0}^{\infty} e^{-rt} r^{-(\gamma_{0}+\frac{1}{2})}dr\right)
\]
\[
 =\frac{1}{\bar{b}_{\mu}\pi} \left( \frac{1}{\gamma_{0}} + \Gamma\left(\frac{1}{2}-\gamma_{0}\right)
 t^{\gamma_{0}-\frac{1}{2}}\right).
\]
Due to the fact that $\gamma_{0} \in (0,\frac{1}{4})$ we obtain that  $g \in L^{2}(0,T)$ and so the convolution of $g$ and $\poch \left( k*[u(x,\cdot)- u_{0}(x)](t) \right) $ belongs to $C([0,T];H^{-1}(\Om))$. Finally, from (\ref{uuz}) we have $u(\cdot,0)=u_{0}$ in $H^{-1}(\Om)$, which finishes the proof of theorem~\ref{tw1}.

\label{sectioncont}

\section{Proof of theorem \ref{tw2}. }

We shall show that in the case $L = \Delta$ and under the assumption of theorem~\ref{tw2} we can get additional estimate for approximate sequence and then by weak compactness argument we get more regular solution. Here equation (\ref{nn}) takes the form $\dm u^{n}(x,t) = \Delta u^{n}(x,t) + f^{n}(x,t)$. Multiplying this equality by $\vf_{k}$ and integrating over $\Om$ we obtain that
\[
\iO \dm u^{n}(x,t) \vf_{k}(x)dx = \iO u^{n}(x,t)\Delta \vf_{k}(x)dx + \iO f_{\frac{1}{n}}(x,t)\vf_{k}(x)dx
\]
Let us consider the case when $m=1.$ If $w\in \hk^{3}$, then there exist constants $d_{k}$ such that $w(x)= \sum_{k=1}^{\infty}d_{k}\vf_{k}.$ Denoting $w^{n}(x) = \sum_{k=1}^{n}d_{k}\vf_{k}$ and multiplying the equality above by $d_{k}$ and summing over $k$ from $1$ to $n$ we get
\eqq{
\iO \dm u^{n}(x,t) w(x)dx = \iO u^{n}(x,t)\Delta w(x)dx + \iO f_{\frac{1}{n}}(x,t)w^{n}(x)dx,
}{ms1}
where we skip  superscript $n$ in the first two terms applying  orthogonality condition imposed on $\{ \vk \}_{k\in \mathbb{N}}$.  We introduce the distributed order Riemann-Liouville operator $\rlm$ by the formula $\rlm v = \izj \p^{\beta} v  \cdot \mb d\beta,$
where $\p^{\beta} u$ denotes the Riemann-Liouville fractional derivative (see (\ref{fRL})). We apply the operator $\rlm$ to both sides of (\ref{ms1}). We notice that in this case  $\rlm v = \int_{0}^{\frac{1}{2}} \p^{\beta} v \cdot \mb d\beta$ and
applying proposition 5  \cite{KY} we have  $\p^{\beta} D^{\al}u^{n} = D^{\al+\beta}u^{n}$ for $\al+\beta \leq 1$, thus we obtain
\[
\iO \int_{0}^{\frac{1}{2}}\int_{0}^{\frac{1}{2}} D^{\al+\beta}u^{n}(x,t)\ma \mb d\al d\beta w(x)dx
\]
\eqq{
= \iO\hspace{-0.1cm}\int_{0}^{\frac{1}{2}}\hspace{-0.2cm} \p^{\beta} u^{n}(x,t) \mb d\beta \Delta w(x)dx + \iO\hspace{-0.1cm} \int_{0}^{\frac{1}{2}}\hspace{-0.2cm} \p^{\beta}f_{\frac{1}{n}}(x,t)\mb d\beta w^{n}(x)dx.
}{por}
We first focus on the case $m=1$. Our aim is to estimate the norm of left-hand side of (\ref{por})  in the space $L^{p}(0,T;(\bar{H}^{3})^{\ast})$ for some  $p \in (1,2).$
Comparing the definition of the Caputo derivative and the Riemann-Liouville derivative we obtain the estimate
\[
\abs{\iO \int_{0}^{\frac{1}{2}}\int_{0}^{\frac{1}{2}} D^{\al+\beta}u^{n}(x,t)\ma \mb d\al d\beta w(x)dx}
\]
 \[
 \leq \abs{\iO \dm u^{n}(x,t)\Delta w(x)dx}+ \abs{\iO \int_{0}^{\frac{1}{2}}\hspace{-0.3cm}\frac{t^{-\beta}}{\Gamma(1-\beta)}u^{n}(x,0)\mb d\beta \Delta w(x)dx}
 \]
\[
  + \abs{\iO \int_{0}^{\frac{1}{2}} \p^{\beta}f_{\frac{1}{n}}(x,t)\mb d\beta w^{n}(x)dx}.
\]
We may estimate the second term on the right hand side and we get
\[
\abs{\iO \int_{0}^{\frac{1}{2}}\int_{0}^{\frac{1}{2}} D^{\al+\beta}u^{n}(x,t)\ma \mb d\al d\beta w(x)dx}
\]
\[
\leq \abs{\iO \dm u^{n}\Delta w(x)dx} + 2c_{\mu} \max\{1,t^{-\frac{1}{2}}\}\abs{\iO u^{n}(x,0) \Delta w (x)dx}
\]
\[
 + \abs{\iO \int_{0}^{\frac{1}{2}} \p^{\beta}f_{\frac{1}{n}} (x,t)\mb d\beta w^{n}(x)dx},
\]
where $c_{\mu}$ is defined in (\ref{aab}).  Having in mind that $\Delta w|_{\p \Om} = 0$ we may write
\[
\norm{\int_{0}^{\frac{1}{2}}\int_{0}^{\frac{1}{2}} D^{\al+\beta}u^{n}(\cdot, t)\ma \mb d\al d\beta}_{(\bar{H}^{3})^{\ast}}
\]
\[
= \sup_{\norm{w}_{\bar{H}^{3}}\leq 1} \abs{\iO \int_{0}^{\frac{1}{2}}\int_{0}^{\frac{1}{2}} D^{\al+\beta}u^{n}(x,t)\ma \mb d\al d\beta w(x)dx}
\]
\[
\leq \norm{\dm u^{n}}_{_{H^{-1}(\Om)}} \hspace{-0.9cm} +2c_{\mu}\max\{1,t^{-\frac{1}{2}}\} \norm{\uz}_{_{L^{2}(\Om)}}\hspace{-0.4cm} + \norm{\int_{0}^{\frac{1}{2}}\hspace{-0.2cm} \p^{\beta}f_{\frac{1}{n}} (\cdot, t)\mb d\beta}_{_{(\bar{H}^{3})^{\ast}}}\hspace{-0.7cm}.
\]
We take both sides of last inequality to the power of $p\in (1,2)$ and integrate with respect to~$t.$ Then we obtain that
 \[
\norm{\int_{0}^{\frac{1}{2}}\int_{0}^{\frac{1}{2}} D^{\al+\beta}u^{n} \hspace{0.1cm} \ma \mb d\al d\beta}_{L^{p}(0,T;(\bar{H}^{3})^{\ast})}
\]
\[
\leq c_{1}\hspace{-0.1cm}\left(\norm{\dm u^{n}}_{_{L^{2}(0,T;H^{-1}(\Om))}}\hspace{-0.7cm} +\norm{\uz}_{_{L^{2}(\Om)}}
 \hspace{-0.3cm}+ \norm{\int_{0}^{\frac{1}{2}} \p^{\beta}f_{\frac{1}{n}}\mb d\beta}_{_{L^{2}(0,T;(\bar{H}^{3})^{\ast})}}\right)\hspace{-0.1cm},
\]
where $c_{1} $ depends only on $c_{\mu}$, $p$ and $T$.
We note that
\eqq{
\int_{0}^{\frac{1}{2}} \p^{\beta}f_{\frac{1}{n}} \hj \mb d\beta \longrightarrow \int_{0}^{\frac{1}{2}} \p^{\beta}f \hj \mb d\beta \textrm{ in } L^{2}(0,T;(\bar{H}^{3})^{\ast}),}{ostatnie}
because
\[
\norm{\int_{0}^{\frac{1}{2}} \left(\p^{\beta}f_{\frac{1}{n}} -\p^{\beta}f \right) \hj \mb d\beta}_{_{L^{2}(0,T;(\bar{H}^{3})^{\ast})}} \hspace{-1.1cm} \leq \int_{0}^{\frac{1}{2}}  \hspace{-0.1cm}\norm{\p^{\beta}f_{\frac{1}{n}} -\p^{\beta}f }_{_{L^{2}(0,T;(\bar{H}^{3})^{\ast})}} \hspace{-0.5cm} \mb d\beta
\]
 and from proposition 13 in \cite{KY}  $\p^{\beta}f_{\frac{1}{n}}  \longrightarrow  \p^{\beta}f $ in $L^{2}(0,T;(\bar{H}^{3})^{\ast})$ uniformly with respect to $\beta \in [0,\frac{1}{2}]$. Thus  we obtain (\ref{ostatnie}) and for $n$ large enough we have
\eqq{
 \norm{\int_{0}^{\frac{1}{2}} \p^{\beta}f_{\frac{1}{n}} \hj \mb d\beta}_{_{L^{2}(0,T;(\bar{H}^{3})^{\ast})}}
\leq 2 \norm{\int_{0}^{\frac{1}{2}}  \p^{\beta}f \hj \mb d\beta}_{_{L^{2}(0,T;(\bar{H}^{3})^{\ast})}} \hspace{-0.3cm}.
}{ostatnie2}

Making use of estimate (\ref{dmszac}) and assumptions concerning $\uz$ and $f$ we obtain that the sequence
$\int_{0}^{\frac{1}{2}}\int_{0}^{\frac{1}{2}} D^{\al+\beta}u^{n} \hj \ma \mb d\al d\beta$ is uniformly bounded in $L^{p}(0,T;(\bar{H}^{3})^{\ast})$ for every $p \in (1,2).$ By  weak compactness argument we obtain that for weak solution of (\ref{uklad}) we have
\eqq{
\int_{0}^{\frac{1}{2}}\int_{0}^{\frac{1}{2}} I^{1-(\al+\beta)}[u-\uz]\hj \ma \mb d\al d\beta\in {}_{0}W^{1,p}(0,T;(\bar{H}^{3})^{\ast}),
}{d4szac}
for every $p \in (1,2)$ and by zero we mean that the function vanishes at $t=0$. If we denote
\[
k_{1}(t) = \int_{0}^{\frac{1}{2}}\int_{0}^{\frac{1}{2}} \frac{1}{\Gamma(1-(\al+\beta))} t^{-(\al+\beta)}\ma \mb d\al d\beta,
\]
then (\ref{d4szac}) can be written shorter: $k_{1}\ast [u-u_{0}]\in {}_{0}W^{1,p}(0,T;(\bar{H}^{3})^{\ast})$. Now, we would like to find the operator inverse to  $\ddt k_{1}* \cdot$.  We follow the steps from the second section of the paper. If we investigate the Laplace transform of $k_{1}$, then  we obtain that
\[
\tl{k_{1}}(p) = \int_{0}^{\frac{1}{2}}\int_{0}^{\frac{1}{2}} p^{\al+\beta-1}\ma \mb d\al d\beta.
\]
We note that from assumption (\ref{intmum}) with $m=1$ we get $\overline{\gamma} \in (0, \frac{1}{8})$ such that $\int_{\frac{1}{4}+\overline{\gamma}}^{\frac{1}{2} - \overline{\gamma}} \mu (\al) d \al$ is positive. Thus as in the proof of proposition~\ref{propone} we are able to prove that $F(p)=\frac{1}{p\tl{k_{1}}(p)}$ satisfy assumptions of lemma \ref{odwrotna}. Applying this lemma we obtain that $k_{1}* g_{1}=1$, where $g_{1}$ is given by formula
\[
g_{1}(t)=\frac{1}{\pi}\izi  e^{-rt}\frac{G_{1,s}(r)}{G^{2}_{1,s}(r) + G^{2}_{1,c}(r)}dr,
\]
where
\[
G_{1,s}(r) = \int_{0}^{\frac{1}{2}}\int_{0}^{\frac{1}{2}}\sin \pi (\al+\beta)r^{\al+\beta}\ma\mb d\al d\beta,
\]
\[
G_{1,c}(r) = \int_{0}^{\frac{1}{2}}\int_{0}^{\frac{1}{2}}\cos \pi (\al+\beta)r^{\al+\beta}\ma\mb d\al d\beta.
\]

We will find appropriate estimate for kernel $g_{1}.$ To that end, we may notice that, thanks to the assumption $\int_{\frac{1}{4}}^{\frac{1}{2}}\ma d\al > 0$ we can find $\gamma_{1} \in (0,\frac{1}{8})$ such, that
\eqq{
\int_{\frac{1}{4}+\gamma_{1}}^{\frac{1}{2}-\gamma_{1}}\ma d\al > 0.
}{gamma3}
Using (\ref{gamma3}) we may estimate as a few times before
\[
\int_{0}^{\frac{1}{2}}\int_{0}^{\frac{1}{2}}\sin \pi (\al+\beta)r^{\al+\beta}\ma\mb d\al d\beta
\]
\[
\geq \int_{\frac{1}{4}+\gamma_{1}}^{\frac{1}{2}-\gamma_{1}}
\int_{\frac{1}{4}+\gamma_{1}}^{\frac{1}{2}-\gamma_{1}}\sin \pi (\al+\beta)r^{\al+\beta}\ma\mb d\al d\beta.
\]
For $r \leq 1$ we have
\[
\int_{0}^{\frac{1}{2}}\int_{0}^{\frac{1}{2}}\sin \pi (\al+\beta)r^{\al+\beta}\ma\mb d\al d\beta \geq c r^{1-2\gamma_{1}}
\]
and for $r > 1$
\[
\int_{0}^{\frac{1}{2}}\int_{0}^{\frac{1}{2}}\sin \pi (\al+\beta)r^{\al+\beta}\ma\mb d\al d\beta \geq c r^{\frac{1}{2}+2\gamma_{1}},
\]
where the constant $c> 0$ depends only on $\mu.$
Then we obtain the estimate
\[
g_{1}(t) \leq c_{1} \left[\izj e^{-rt}r^{2\gamma_{1}-1}dr + \int_{1}^{\infty}e^{-rt}r^{-(\frac{1}{2}+2\gamma_{1})}dr\right].
\]
As in the proof of proposition~\ref{oszacg1},  for fixed $T > 0$ we get that $g_{1}(t) \leq c_{2} t^{2\gamma_{1}-\frac{1}{2}}$,
where $c_{2} = c_{2}(\mu, T)$. Finally, we see that
\eqq{
g_{1} \in L^{q}(0,T), \hd  \textrm{ where } \hd q < 2+\frac{8\gamma_{1}}{1-4\gamma_{1}}.
}{g1}
Now  we are ready to prove continuity of solution in the case $m=1.$ From (\ref{d4szac}) we deduce that $k_{1}*[u-u_{0}]$  is absolutely continuous with values in $(\bar{H}^{3})^{\ast}$ and vanishes for $t=0$, thus we have
\eqq{
g_{1} * \poch \left( k_{1}* [u-u_{0}]  \right)(t)= \poch \left( g_{1} *  k_{1}* [u-u_{0}]  \right)(t)= u(\cdot, t) - \uz.
}{uuz1}
If we take $q>2$  satisfying  (\ref{g1}) and set $p=\frac{q}{q-1}$ in (\ref{d4szac}), then  we obtain that   $\poch \left( k_{1}* [u-u_{0}]  \right)$ belongs to  $L^{p}$ and  the convolution on the left-hand side of (\ref{uuz1}) is continuous.   Thus, $u \in C([0,T];(\bar{H}^{3})^{\ast})$ and $u(\cdot,0)=u_{0}$ in $(\bar{H}^{3})^{\ast}$.

Now, we can move on to the general case. To simplify the notation we introduce for natural $k$ and $m$ $I_{k} = [0,\frac{1}{2m}]^{k}$. If  $m>1$ we apply to both sides of equation (\ref{ms1}) the  operators
\[
\int_{I_{k}} \p^{\al_{1}+\cdots +\al_{k}}\prod_{i=1}^{k}\mu(\al_{i})d\al_{i},
\]
where  $k=1,\cdots,m$. Then for each $k=1,\cdots,m$ we obtain for $w \in \bar{H}^{2k+1}$
\[
\iO\int_{I_{k}} \int_{0}^{\frac{1}{2m}}D^{\al+\al_{1}+\cdots +\al_{k}}u^{n}(x,t)\mu(\al)d\al\prod_{i=1}^{k}\mu(\al_{i})d\al_{i}w(x)dx
\]
\[
=\iO\int_{I_{k}} D^{\al_{1}+\cdots +\al_{k}}u^{n}(x,t)\prod_{i=1}^{k}\mu(\al_{i})d\al_{i}\Delta w(x)dx
\]
\[
+\iO\int_{I_{k}}  \frac{t^{-(\al_{1}+\cdots +\al_{k})}}{\Gamma(1-(\al_{1}+\cdots \al_{k}))}u^{n}(x,0)\prod_{i=1}^{k}\mu(\al_{i})d\al_{i}\Delta w(x)dx
\]
\[
 +\iO\int_{I_{k}}  \p^{\al_{1}+\cdots + \al_{k}}f_{\frac{1}{n}}(x,t)\prod_{i=1}^{k}\mu(\al_{i})d\al_{i} w^{n}(x)dx.
\]
Thus, repeating the argument above we get for each $k=1,\cdots,m$
\[
\norm{\int_{I_{k}} \int_{0}^{\frac{1}{2m}} D^{\al+\al_{1}+\cdots +\al_{k}}u^{n}(\cdot ,t)\mu(\al)d\al \prod_{i=1}^{k}\mu(\al_{i})d\al_{i}}_{(\bar{H}^{2k+1})^{\ast}}
\]
\[
\leq \norm{\int_{I_{k}} D^{\al_{1}+\cdots +\al_{k}}u^{n}(\cdot,t)\prod_{i=1}^{k}\mu(\al_{i})d\al_{i}}_{(\bar{H}^{2k-1})^{\ast}} +c t^{-\frac{k}{2m}}\norm{\uz}_{L^{2}(\Om)}
\]
\[
+ \norm{  \int_{I_{k}} \partial^{\al_{1}+\cdots +\al_{k}}\fjn(\cdot,t)\prod_{i=1}^{k}\mu(\al_{i})d\al_{i}}_{(\bar{H}^{2k+1})^{\ast}}.
\]
Summing up these inequalities for  $k=1,\cdots,m$ we obtain that
\[
\norm{\int_{I_{m}} \int_{0}^{\frac{1}{2m}} D^{\al+\al_{1}+\cdots +\al_{m}}u^{n}(\cdot,t)\mu(\al)d\al \prod_{i=1}^{m}\mu(\al_{i})d\al_{i}}_{(\bar{H}^{2m+1})^{\ast}}
\]
\[
\leq \norm{\dm u^{n}(\cdot,t)}_{H^{-1}(\Om)} +c \norm{\uz}_{L^{2}(\Om)}\sum_{k=1}^{m}t^{-\frac{k}{2m}}
\]
\[
+ \sum_{k=1}^{m}\norm{ \int_{I_{k}} \partial^{\al_{1}+\cdots+ \al_{k}}\fjn(\cdot,t)\prod_{i=1}^{k}\mu(\al_{i})d\al_{i}}_{(\bar{H}^{2k+1})^{\ast}}.
\]\
Proceeding as in the proof of (\ref{ostatnie}) and (\ref{ostatnie2}) we get the estimates for $f_{\frac{1}{n}}$ in appropriate spaces. At last, with use of estimate (\ref{dmszac}) we obtain that for  $p \in (1,2)$
\[
\int_{I_{m}} \int_{0}^{\frac{1}{2m}}\hspace{-0.2cm} D^{\al+\sum_{i=1}^{m}\al_{i}}u^{n}\mu(\al)d\al \prod_{i=1}^{m}\mu(\al_{i})d\al_{i} \in L^{p}(0,T;(\bar{H}^{2m+1})^{\ast}).
\]
By weak compactness argument we have
\eqq{
k_{m}* [u-u_{0}] \in {}_{0}W^{1,p}(0,T;(\bar{H}^{2m+1})^{\ast}),
}{xxa}
where
\[
k_{m}(t)= \hspace{-0.1cm}\int_{I_{m}}\int_{0}^{\frac{1}{2m}}\hspace{-0.2cm}  \frac{1}{\Gamma(1-\beta)}t^{-\beta} \mu(\al) d\al \prod_{k=1}^{m} \mu(\al_{k})d\al_{k},
\]
where $\beta= \al + \sum_{i=1}^{m} \al_{i}$. From the assumption (\ref{intmum}) we deduce that
\[
\exists \hj \gamma_{m} \in  \left( 0, \frac{1}{4m(m+1)}  \right) \hd \hd \hd \int_{\frac{1}{2(m+1)}+\gamma_{m}}^{\frac{1}{2m}-\gamma_{m}} \ma d\al >0 .
\]
Then using lemma~\ref{odwrotna} we obtain the operator inverse to $\ddt  k_{m}* \cdot $, which is defined as a convolution with function $g_{m}$ given by the formula
\[
g_{m}(t) =  \frac{1}{\pi} \int_{0}^{\infty} e^{-rt}\frac{G_{m,s}(r)}{G^{2}_{m,s}(r) + G^{2}_{m,c}(r) }dr,
\]
where
\[
G_{m,s}(r) = \int_{0}^{\frac{1}{2m}} \int_{I_{m}} \sin(\beta \pi) r^{\beta} \prod_{i=1}^{m} \mu(\al_{i}) d\al_{i} \ma d \al,
\]
\[
G_{m,c}(r) = \int_{0}^{\frac{1}{2m}} \int_{I_{m}} \cos(\beta \pi) r^{\beta} \prod_{i=1}^{m} \mu(\al_{i}) d\al_{i} \ma d \al.
\]
Further we obtain the estimate
\[
g_{m}(t)\leq c \left[ \izj\hspace{-0.2cm} e^{-rt } r^{-(\frac{1}{2m}-\gamma_{m})(m+1)} dr +\hspace{-0.2cm} \int_{1}^{\infty}\hspace{-0.2cm} e^{-rt } r^{-(\frac{1}{2(m+1)}+\gamma_{m})(m+1)} dr \right].
\]
Thus we deduce that
\[
g_{m}(t) \leq c(\mu, T) t^{(\frac{1}{2(m+1)}+\gamma_{m})(m+1)-1}
\]
and
\eqq{
g_{m} \in L^{q}(0,T)  \m{ for } q<2+\frac{4\gamma_{m}(m+1)}{1-4\gamma_{m}}.
}{xxb}
By (\ref{xxa}) we deduce that function $k_{m}*[u-u_{0}]$ is absolutely continuous with values in $(\hk^{2m+1})^{*}$ and vanishes at $t=0$, thus we have
\eqq{
g_{m} * \poch \left( k_{m}* [u-u_{0}]  \right)(t)= \poch \left( g_{m} *  k_{m}* [u-u_{0}]  \right)(t)= u(\cdot,t) - \uz.
 }{uuzmx}
If we take $q>2$  satisfying  (\ref{xxb}) and set $p=\frac{q}{q-1}$ in (\ref{xxa}), then  we obtain that   $\poch \left( k_{1}* [u-u_{0}]  \right)$ belongs to  $L^{p}$ and  the convolution on the left-hand side of (\ref{uuzmx}) is continuous. Thus  $u\in C([0,T];(\hk^{2m+1})^{*})$ and $u(\cdot,t)=u_{0}$ in $(\hk^{2m+1})^{*}$. Therefore the proof of theorem~\ref{tw2} is finished.

\section{Proof of theorem~\ref{regularne}.}

In this section we prove the existence of regular solution. We shall show additional estimate for the sequence of approximate solution $u^{n}$ given by remark~\ref{remone}. We multiply (\ref{przyb}) by $\lambda_{m} c_{n,m}(t)$ and sum over $m=1, \dots, n $. Then we have
\[
-\hspace{-0.1cm} \io \hspace{-0.1cm} \dm \un (x,t ) \cdot \lap \un (x,t) dx =\hspace{-0.2cm} \sum_{i,j=1}^{N}\hspace{-0.1cm}\iO \hspace{-0.1cm} a_{i,j}^{n}(x,t)D_{j}u^{n}(x,t)D_{i}\lap u^{n}(x,t)dx
\]
\[
 -\sum_{j=1}^{N}\iO \hspace{-0.1cm} b_{j}^{n}(x,t)D_{j} u^{n}(x,t)\cdot \lap u^{n}(x,t)dx - \iO \hspace{-0.1cm} c^{n}(x,t)u^{n}(x,t)\cdot \lap \un (x,t)dx
 \]
 \[
 - \io  f_{\frac{1}{n}}(x,t) \cdot \lap u^{n}(x,t)dx.
\]
If we integrate by parts and apply proposition~9 \cite{KY},  then we get
\[
 \io \hspace{-0.2cm}\dm \nabla \un (x,t ) \cdot \nabla \un (x,t) dx + \frac{\lambda}{16}\hspace{-0.1cm} \io \hspace{-0.2cm} |D^{2}\un (x,t)|^{2} dx \leq c\hspace{-0.1cm}\io \hspace{-0.2cm} |\nabla \un(x,t) |^{2}dx
\]
\[
+ \frac{4}{\lambda} \| f_{\frac{1}{n}}(\cdot ,t) \|_{\ld}^{2}+\frac{4}{\lambda} \|  b^{n}(\cdot, t) \nabla \un (\cdot, t )  \|_{\ld}^{2} + \frac{4}{\lambda} \| c^{n}(\cdot, t)  \un (\cdot, t )  \|_{\ld}^{2}
\]
Estimating the last two terms in the inequality above and applying  lemma~\ref{Alemat}  with $w= \nabla \un $ leads to
\[
\dm \norm{ \nabla u^{n}(\cdot,t)}_{L^{2}(\Om)}^{2} + \frac{\lambda}{32} \norm{D^{2} u^{n}(\cdot,t)}^{2}_{L^{2}(\Om)}
\]
\[
\leq \bar{c}\io |\nabla \un(x,t) |^{2}dx + \frac{8}{\lambda} \| f_{\frac{1}{n}}(\cdot ,t) \|_{\ld}^{2},
\]
where $\bar{c}$ depends only on  $\max_{i,j}\|
\nabla \aij  \|_{ L^{\infty}(\Omega^{T})}$, the regularity of
$\partial \Omega$, $p_{1}$, $p_{2}$, $\lambda$ and norms $ \| b \|_{L^{\infty}
(0,T; L^{\frac{2p_{1}}{p_{1}-2}}(\Omega))}$,
$\| c \|_{L^{\infty}(0,T; L^{\frac{p_{2}}{p_{2}-2}}\cap L^{2}(\Omega))}$. If we integrate the above inequality over $(0,t)$,  then we obtain
\[
\izj \ija \| \nabla \un (\cdot, t) \|_{\ld}^{2} \ma d\al  + \frac{\lambda}{32} \izt \norm{D^{2} u^{n}(\cdot,\tau)}^{2}_{L^{2}(\Om)}  \dt
\]
\[
\leq \bar{c}\izt  \| \nabla \un(x,\tau ) \|_{\ld}^{2}\dt  + \frac{8}{\lambda} \izt \| f_{\frac{1}{n}}(\cdot ,\tau) \|_{\ld}^{2} \dt
\]
\[
 + \izj \ija \| \nabla \un (\cdot, 0) \|_{\ld}^{2} \ma d\al
\]
\[
\leq \bar{c}_{0}\left(  \izt \| f(\cdot ,\tau) \|_{\ld}^{2} \dt + \|  u_{0} \|_{H^{1}(\Omega)}^{2} \right)+\delta_{n},
\]
where in the last inequality we used lemma~\ref{oszac1} and $\bar{c}_{0}$ depends only on $\bar{c}$, $\mu$ and $T$.  From this inequality we may deduce that the norm of $\un$ in $L^{2}(0,T;H^{2}(\Omega))$ is uniformly bounded and repeating the procedure from section~\ref{limpass} we obtain a uniform bound for the norm of $\izj \ija [\un - \un_{0}] \hj \ma d \al $ in ${}_{0}H^{1}(0,T;\ld)$. Applying the weak compactness argument we finish the proof of  the first part of theorem~\ref{regularne}.

\no To get the continuity of solution at $t=0$ we use the argument from section~\ref{sectioncont} and immediately we get that the convolution of $g$ and  $\poch \left( k*[u(x,\cdot)- u_{0}(x)](t) \right)$ belongs to  $  C([0,T];\ld)$, hence  from (\ref{uuz}) we have $u(\cdot,0)=u_{0}$.

\section{Appendix}

We recall here lemma 2.1 from \cite{Laplace}. We also give a proof because in our opinion assumption $4$ is needed.

\begin{lem}\label{odwrotna}
Let $F$ be a complex function, satisfying following assumptions:
\begin{enumerate}[1)]
\item
$F(p)$ is analitic in $\mathbb{C} \setminus (-\infty, 0].$
\item
The limit $F^{\pm}(t):=\lim\limits_{\vf\rightarrow \pi^{-}}F(te^{\pm i \vf})$ exists for a.a. $t>0$ and $F^{+} = \overline{F^{-}}$.
\item
For each   $0<\eta<\pi$
\begin{enumerate}[a)]
\item
 $|F(p)| = o(1)$, as $|p|\rightarrow \infty$ uniformly on $ |\arg(p)| < \pi - \eta$
\item
$|F(p)|= o(\frac{1}{|p|})$, as $|p|\rightarrow 0$ uniformly on $|\arg(p)|< \pi - \eta$.
\end{enumerate}
\item
There exists $\ve_{0} \in (0,\frac{\pi}{2})$ and a function $a=a(r)$ such that \hd $\forall  \vf \in (\pi - \ve_{0}, \pi)$  the estimate  $\abs{F(re^{\pm i\vf})}   \leq a(r)$ holds, where
\[
 \frac{a(r)}{1+r} \in L^{1}(\mr_{+}).
\]
\end{enumerate}
Then for $p\in \mathbb{C}$ such that $\Re{p}>0$ we have
\[
F(p) = \izi e^{-xp}f(x)dx, \textrm{  where  }f(x) = \frac{1}{\pi}\izi e^{-rx} \Im(F^{-}(r))dr.
\]
\end{lem}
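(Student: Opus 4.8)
The plan is the classical one: represent $F(p)$ by Cauchy's formula on a Hankel-type contour encircling the cut $(-\infty,0]$, collapse the contour onto the two edges of the cut, and identify the resulting boundary integral as a Laplace transform.

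Fix $p$ with $\Re p>0$, so $\abs{\arg p}<\tfrac{\pi}{2}$. Given $0<\eta<\min\{\ve_{0},\tfrac{\pi}{2}\}$ and $0<\delta<\abs{p}<R$, let $\Gamma=\Gamma^{\eta}_{R,\delta}$ be the positively oriented closed curve made of the outer arc $\{Re^{i\theta}:\abs{\theta}\le\pi-\eta\}$, the two segments along the rays $\arg z=\pm(\pi-\eta)$ between radii $\delta$ and $R$, and the inner arc $\{\delta e^{i\theta}:\abs{\theta}\le\pi-\eta\}$. By assumption~1), $F$ is holomorphic inside $\Gamma$ and $p$ lies in its interior, so $F(p)=\frac{1}{2\pi i}\oint_{\Gamma}\frac{F(z)}{z-p}\,dz$. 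First I would send $R\to\infty$ and then $\delta\to0$: on the outer arc, assumption~3a) gives $\sup\abs{F}\to0$ while $\abs{z-p}\ge R-\abs{p}$, so that arc integral is bounded by $\tfrac{2\pi R}{R-\abs{p}}\sup\abs{F}\to0$; on the inner arc, assumption~3b) gives $\delta\sup\abs{F}\to0$ while $\abs{z-p}\ge\abs{p}/2$ for $\delta<\abs{p}/2$, so that contribution is at most $\tfrac{4\pi}{\abs{p}}\,\delta\sup\abs{F}\to0$. Parametrising the two rays by $z=te^{\pm i(\pi-\eta)}$, $t\in(0,\infty)$, we are left with
\[
F(p)=\frac{1}{2\pi i}\izi\left[\frac{F(te^{-i(\pi-\eta)})\,e^{-i(\pi-\eta)}}{te^{-i(\pi-\eta)}-p}-\frac{F(te^{i(\pi-\eta)})\,e^{i(\pi-\eta)}}{te^{i(\pi-\eta)}-p}\right]dt.
\]
This integral is absolutely convergent because, for $t>0$, the rays lie in $\{\Re z<0\}$, whence $\abs{te^{\pm i(\pi-\eta)}-p}\ge\abs{\Re(te^{\pm i(\pi-\eta)}-p)}=t\cos\eta+\Re p\ge c_{p}(1+t)$ with $c_{p}=\min\{\cos\ve_{0},\Re p\}>0$, and by assumption~4) the integrand is then dominated by $a(t)/(c_{p}(1+t))\in L^{1}(\R_{+})$.

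The decisive step is the passage $\eta\to0^{+}$. Since the majorant $a(t)/(c_{p}(1+t))$ is independent of $\eta$, assumptions~2) and 4) together with dominated convergence allow the limit to be taken under the integral sign: using $F(te^{\pm i(\pi-\eta)})\to F^{\pm}(t)$ for a.a. $t$, $e^{\pm i(\pi-\eta)}\to-1$ and $te^{\pm i(\pi-\eta)}-p\to-(t+p)$, we obtain $F(p)=\frac{1}{2\pi i}\izi\frac{F^{-}(t)-F^{+}(t)}{t+p}\,dt$. By assumption~2), $F^{+}=\overline{F^{-}}$, so $F^{-}-F^{+}=2i\,\Im(F^{-})$ and hence $F(p)=\frac{1}{\pi}\izi\frac{\Im(F^{-}(t))}{t+p}\,dt$.

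Finally I would write $\frac{1}{t+p}=\izi e^{-x(t+p)}\,dx$ (valid since $\Re(t+p)=t+\Re p>0$) and apply Fubini's theorem, which is justified because $\abs{\Im(F^{-}(t))}\le a(t)$ and $\izi\izi e^{-x(t+\Re p)}a(t)\,dx\,dt=\izi\frac{a(t)}{t+\Re p}\,dt\le C_{p}\izi\frac{a(t)}{1+t}\,dt<\infty$ by assumption~4). Exchanging the order of integration yields $F(p)=\izi e^{-xp}f(x)\,dx$ with $f(x)=\frac{1}{\pi}\izi e^{-xt}\Im(F^{-}(t))\,dt$, and this inner integral is finite for every $x>0$ since $e^{-xt}\le C_{x}/(1+t)$ on $\R_{+}$ while $a(t)/(1+t)\in L^{1}$; thus $f$ is well defined and the formula is proven. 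I expect the arc estimates and the bookkeeping of the four pieces of $\Gamma$ to be routine; the genuine obstacle is securing an $\eta$-uniform integrable majorant for the two edge integrands so that $\eta\to0$ may be performed inside the integral — which is exactly the role of hypothesis~4), and whose omission (as the authors note) would leave the classical argument incomplete.
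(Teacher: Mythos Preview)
Your proof is correct and follows essentially the same route as the paper: Cauchy's formula on a keyhole contour, assumption~3 to eliminate the arcs, and assumption~4 to provide the $\eta$-uniform majorant needed for dominated convergence and Fubini. The only cosmetic difference is the order of operations --- you first collapse $\eta\to0$ onto the cut and then expand $\frac{1}{t+p}$ as a Laplace integral, whereas the paper expands $\frac{1}{p-re^{\pm i\phi}}$ and applies Fubini \emph{before} sending $\phi\to\pi$; your ordering is slightly cleaner (one dominated-convergence step instead of two) but the substance is identical.
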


\begin{proof}
We follow \cite{Laplace}: we fix $p\in \mathbb{C}$ such that $\Re{p}>0$. Then we choose $r_{1}$, $r_{2}$ and $\phi$ such that $0<r_{1}<|p|<r_{2}$ and $\phi \in (\pi - \ep_{0}, \pi)$. We denote by $C^{\phi}_{r_{1},r_{2}}$ a positively oriented closed contour created by two segments $\{z\in \mathbb{C}: r_{1}\leq |z| \leq r_{2}, \hd \arg(z)= \pm \phi  \}$ and two arcs $\{z\in \mathbb{C}: \hd |z|= r_{i}, \hd |\arg{z}|<\phi \}$, $i=1,2$. Then from Cauchy formula we have
\[
F(p)= \frac{1}{2\pi i } \int_{C^{\phi}_{r_{1},r_{2}}} \frac{F(z)}{z-p}dz .
\]
The assumption $3$ allows us to  take the limit $r_{1}\rightarrow 0$ and $r_{2} \rightarrow \infty$ and we get
\[
F(p)= \frac{1}{2\pi i } \izi \left[   \frac{F(re^{i\phi} )e^{i \phi}}{p - re^{i\phi}} -  \frac{F(re^{-i\phi} )e^{-i \phi}}{p - re^{-i\phi}}\right] dr ,
\]
and by assumption $4$ this integral is absolutely convergent. Because $\Re{re^{\pm i \phi}}<0 $ we can write
\[
\frac{1}{p - re^{\pm i \phi }} = \izi e^{-x(p-re^{\pm i \phi})} dx
\]
and we have $F(p)=$
\[
\frac{1}{2\pi i }\hspace{-0.1cm} \izi \hspace{-0.1cm}\left[   F(re^{i\phi} )e^{i \phi}\hspace{-0.2cm}\izi\hspace{-0.2cm} e^{-x(p-re^{ i \phi})} dx\hspace{-0.1cm} - \hspace{-0.1cm} F(re^{-i\phi} )e^{-i \phi}\hspace{-0.2cm}\izi \hspace{-0.2cm} e^{-x(p-re^{- i \phi})} dx\right]\hspace{-0.1cm} dr .
\]
The above integral is absolutely convergent because we have
\[
\left| e^{\pm i \phi} F(re^{\pm i \phi  }) e^{-xp + xre^{\pm i \phi }} \right| \leq a(r) e^{x(-\Re p+r\cos{\phi})}
\]
\[
\leq a(r) e^{x[-\Re p+r\cos(\pi- \ep_{0})]}
\]
and by assumption $4$ we get
\eqq{
\izi \hspace{-0.1cm} \hspace{-0.1cm} \izi \hspace{-0.1cm} \hspace{-0.1cm} \hspace{-0.1cm}  a(r) e^{x[-\Re p+r\cos(\pi- \ep_{0})]} dxdr = \hspace{-0.1cm} \izi \hspace{-0.1cm}
\hspace{-0.1cm} \hspace{-0.1cm}  \frac{a(r)}{\Re p - r\cos(\pi - \ep_{0})} dr <\hspace{-0.1cm} \infty.
}{nowec}
Therefore we may apply Fubini theorem and we have
\eqq{
F(p)= \izi e^{-xp} f(x,\phi) dx,
}{noweb}
where
\eqq{
f(x, \phi )= \frac{1}{2\pi i } \izi \left[  e^{i \phi + rxe^{i\phi }} F(re^{i \phi}) -  e^{-i \phi + rxe^{-i\phi }} F(re^{-i \phi})  \right]dr
}{nowea}
and by assumption $4$ for each $x\in (0,\infty)$ and $\phi \in (\pi- \ep_{0}, \pi)$ the integral (\ref{nowea}) is absolutely convergent. We shall take the limit $\phi \rightarrow \pi$ in  (\ref{noweb}). First we note that
\[
|e^{- xp} f(x,\phi)| \leq e^{- x \Re p} \izi e^{rx \cos{\phi}} \left( |F(re^{i\phi})|+|F(re^{-i\phi})| \right)dr
\]
\[
\leq 2 e^{-x \Re p} \izi e^{rx \cos(\pi - \ep_{0})} a(r) dr
\]
is in $L^{1}(\mathbb{R}_{+})$, because (\ref{nowec}) holds. Hence by Lebesgue  dominated convergence we have
\[
F(p)= \izi e^{-xp} \lim_{\phi \rightarrow \pi} f(x,\phi) dx.
\]
Applying again  assumption $4$ together with  Lebesgue  dominated convergence theorem  we get
\[
F(p)= \izi e^{-xp}  f(x,\pi) dx, \hd \m{ and } \hd f(x, \pi )= \frac{1}{\pi} \izi e^{- rx } \Im(F^{-}(r))dr.
\]

\end{proof}

\end{document}